\newtheorem{thm}{Theorem}[section]
\newtheorem{lm}[thm]{Lemma}
\newtheorem{cor}[thm]{Corollary}
\newtheorem{prop}[thm]{Proposition}
\theoremstyle{definition}
\newtheorem*{df*}{Definition}
\theoremstyle{remark}
\newtheorem*{rem*}{Remark}
\numberwithin{equation}{section}
\newcommand{\ci}[1]{_{ {}_{\scriptstyle #1}}}
\newcommand{\ti}[1]{_{\scriptstyle \text{\rm #1}}}
\newcommand{\mathd}{\mathrm{d}}
\newcommand{\cI}{\mathcal{I}}
\newcommand{\cD}{\mathcal{D}}
\newcommand{\cC}{\mathcal{C}}
\newcommand{\cF}{\mathcal{F}}
\newcommand{\cU}{\mathcal{U}}
\newcommand{\e}{\varepsilon}
\newcommand{\D}{\mathbb{D}}
\newcommand{\R}{\mathbb{R}}
\newcommand{\Z}{\mathbb{Z}}
\newcommand{\bP}{\mathbb{P}}
\newcommand{\Om}{\Omega}
\newcommand{\1}{\mathbf{1}}
\newcommand{\cz}{Calder\'{o}n--Zygmund\ }
\newcommand{\La}{\langle }
\newcommand{\Ra}{\rangle }
\newcommand{\bs}[1]{\boldsymbol{#1}}
\newcommand{\bfD}{\bs{\mathcal{D}}}
\newcommand{\bsigma}{\bs{\Sigma}}
\newcommand{\ddoto}{\"{o}}
\newcommand{\ddotu}{\"{u}}
\newcommand{\fdot}{\,\cdot\,}
\DeclareMathOperator*{\esssup}{ess\,sup}
\newcommand{\BMO}{\ensuremath\text{BMO}}
\newcommand{\BMOA}{\ensuremath\text{BMOA}}
\newcommand{\BMOprodD}{\ensuremath\text{BMO}\ci{\text{prod},\bfD}}
\newcommand{\sh}{\text{sh}}
\def\cyr{\fontencoding{OT2}\fontfamily{wncyr}\selectfont}
\DeclareTextFontCommand{\textcyr}{\cyr}
\newenvironment{entry}
{\begin{list}{X}%
		{%
			\setlength{\labelwidth}{55pt}%
			\setlength{\leftmargin}{\labelwidth}
			\addtolength{\leftmargin}{\labelsep}%
			\setlength{\itemsep}{.4pc}
	}%
}%
{\end{list}}
\newcounter{vremennyj}
\begin{document}

\title[Dyadic product BMO in the Bloom setting]{Dyadic product BMO in the Bloom setting}
\author{Spyridon Kakaroumpas}
\address{Institut f\"{u}r Mathematik\\ Universit\"{a}t W\"{u}rzburg\\97074 W\ddotu rzburg\\ Germany}
\email{spyridon.kakaroumpas@mathematik.uni-wuerzburg.de}
\author{Od\'i Soler i Gibert}
\address{Institut f\"{u}r Mathematik\\ Universit\"{a}t W\"{u}rzburg\\97074 W\ddotu rzburg\\ Germany}
\email{odi.solerigibert@mathematik.uni-wuerzburg.de}
\thanks{Spyridon Kakaroumpas is supported by the  Alexander von Humboldt Stiftung.}
\thanks{Od\'i Soler i Gibert is supported by the ERC project CHRiSHarMa no. DLV-682402.}
\date{}

\begin{abstract}
\'O.~Blasco and S.~Pott showed that the supremum of operator norms over $L^2$ of all bicommutators (with the same symbol) of one-parameter Haar multipliers dominates the biparameter dyadic product BMO norm of the symbol itself. In the present work we extend this result to the Bloom setting, and to any exponent $1<p<\infty$. The main tool is a new characterization in terms of paraproducts and two-weight John--Nirenberg inequalities for dyadic product BMO in the Bloom setting.
We also extend our results to the whole scale of indexed spaces between little bmo and product BMO in the general multiparameter setting, with the appropriate iterated commutator in each case.
\end{abstract}

\maketitle
\setcounter{tocdepth}{1}

\tableofcontents
\setcounter{tocdepth}{2}

\section*{Notation}

\begin{entry}
\item[$\1\ci{E}$] characteristic function of a set $E$;

\item[$dx$] integration with respect to Lebesgue measure; 

\item[$|E|$] $d$-dimensional Lebesgue measure of a measurable set $E\subseteq\R^d$;

\item[$\La f\Ra\ci{E}$] average with respect to Lebesgue measure, $\La f\Ra\ci{E}:=\frac{1}{|E|}\int_{E}f(x)dx$;

\item[$L^{\infty}\ti{c}$] space of compactly supported $L^{\infty}$ functions;

\item[$L^{p}(w)$] weighted Lebesgue space, $\|f\|\ci{L^p(w)}^p := \int_{\R^d}|f(x)|^p w(x) dx$; 

\item[$\La f,g\Ra$] usual $L^2$-pairing, $\La f,g\Ra := \int f(x) \overline{g(x)} dx$;

\item[$w(E)$] Lebesgue integral of a weight $w$ over a set $E$, $w(E):=\int_{E}w(x)dx$;

\item[$p'$] H\"{o}lder conjugate exponent to $p$, $1/p+1/p'=1$; 

\item[$\cD$] family of all dyadic intervals in $\R$;

\item[$I_{-},\,I_{+}$] respectively, left and right half of an interval $I\in\cD$;

\item[$\bfD$] family of all dyadic rectangles in the product space $\R\times\R$;

\item[$\text{sh}(\cU)$] ``shadow" of a family $\cU$ of dyadic rectangles, $\text{sh}(\cU):=\bigcup_{R\in\cU}R$;

\item[$h^{(0)}\ci{I},~h^{(1)}\ci{I}$] $L^{2}$-normalized \emph{cancellative} and \emph{non-cancellative} respectively Haar functions for an interval $I\in\cD$, $h^{(0)}\ci{I}:=\frac{\1\ci{I_{+}}-\1\ci{I_{-}}}{\sqrt{|I|}}$, $h^{(1)}\ci{I}:=\frac{\1\ci{I}}{\sqrt{|I|}}$; for simplicity we denote $h\ci{I}:=h^{(0)}\ci{I}$;

\item[$b\ci{I}$] usual Haar coefficient of a function $b\in L^1\ti{loc}(\R)$, $b\ci{I}:=\La b,h\ci{I}\Ra$, $I\in\cD$;

\item[$h\ci{R}^{(\e_1\e_2)}$] any of the four $L^2$-normalized Haar functions for a rectangle $R\in\bfD$, $h\ci{R}^{(\e_1\e_2)}:=h\ci{I}^{(\e_1)}\otimes h\ci{J}^{(\e_2)}$, where $R=I\times J$ and $\e_1,\e_2\in\lbrace0,1\rbrace$; for simplicity we denote $h\ci{R}:=h\ci{R}^{(00)}$;

\item[$b\ci{R}$] (00) Haar coefficient of a function $b\in L^1\ti{loc}(\R^2)$, $b\ci{R}:=\La b,h\ci{R}\Ra$, $R\in\bfD$;

\item[$T^{\ast}$] formal $L^2$-adjoint operator to the operator $T$, $\La Tf,g\Ra=\La f,T^{\ast}g\Ra$.
\end{entry}

The notation $x\lesssim\ci{a,b,\ldots}  y$ means $x\leq Cy$ with a constant $0<C<\infty$ depending \emph{only} on the quantities $a, b, \ldots$; the notation $x\gtrsim\ci{a,b,\ldots} y$ means $y\lesssim\ci{a,b,\ldots} x$.  We use  $x\sim\ci{a,b,\ldots} y$ if \emph{both} $x\lesssim\ci{a,b,\ldots} y$ and $x\gtrsim\ci{a,b,\ldots} y$ hold. Sometimes we might omit some of these quantities $a,b,\ldots$ from the notation. The context will always make clear when this happens.

\section{Introduction and main results}
In 1957, Z.~Nehari \cite{nehari} showed that Hankel operators $\mathrm{H}_b$ are bounded from the Hardy space $\mathcal{H}^2(\partial\D)$ into itself if and only if the symbol $b$ belongs to the space of analytic functions with bounded mean oscillation, or simply $b \in \BMOA.$
In fact, Nehari \cite{nehari} shows an equivalence between the norm of the operator $\mathrm{H}_b$ and the $\BMOA$ norm of the symbol $b.$
His proof relies on the fact that a function $f$ in the Hardy space $\mathcal{H}^1(\partial\D)$ can be factored as $f = g_1 g_2,$ where both $g_1,g_2 \in \mathcal{H}^2(\partial\D).$
Note here that the Hankel operator $\mathrm{H}_b$ is essentially equivalent to the commutator $[H,b],$ where $H$ denotes the Hilbert transform on $\partial\D$ and (abusing notation) $b$ stands for multiplication by this function.
This allows one to consider not only the real variable version of Nehari's result, but also analogues in $\R^d$ for any $d$ by studying commutators of the form $[R,b],$ where $R$ denotes one of the Riesz transforms in $\R^d.$
Nonetheless, observe that the factorization for $\mathcal{H}^1(\partial\D)$ has no counterpart in this setting (see also \cite{coifman-rochberg-weiss}).

In this direction, R.~R.~Coifman, R.~Rochberg and G.~Weiss \cite{coifman-rochberg-weiss} proved in 1976 their celebrated commutator theorem.
Namely, for a function $b$ with bounded mean oscillation on $\R^d,$ denoted $b \in \BMO(\R^d),$ they show an equivalence between the $\BMO$ norm of $b$ and the sum of the norms of the commutators $[R^{(j)},b],~j=1,\ldots,d$ with $R^{(j)}$ denoting the $j$-th Riesz transform in $\R^d,$ as operators from $L^p$ into itself, for any $1 < p < \infty.$
They also show that for a \cz operator $T,$ the norm of $[T,b]$ as an operator from $L^p$ into itself, $1 < p < \infty,$ is bounded above by $\Vert b \Vert\ci{\BMO}.$
The argument used by Coifman--Rochberg--Weiss \cite{coifman-rochberg-weiss} to show the lower bound for the sum of commutators with the Riesz transforms is based on a decomposition of the identity as a linear combination of products of Riesz transforms using spherical harmonics in $\R^d.$
This allows one to bound the oscillation of a $\BMO$ function by the sum of the norms of $[R^{(j)},b]$, $j=1,\ldots,d$. 

In a different direction, S.~Bloom \cite{bloom} proved in 1985 an analogue of Nehari's result for weighted spaces.
Bloom \cite{bloom} showed a norm equivalence between a certain weighted $\BMO$ space and the operator norm of the commutator $[H,b]$, where $H$ is again the Hilbert transform.
To be more precise, for a function $b \in L^1\ti{loc}(\R^d),$ and for a weight $\nu$ on $\R^d$ (that is a locally integrable, a.e. positive function on $\R^d$), define the one-weight $\BMO$ norm
\begin{equation*}
  \Vert b \Vert\ci{\BMO(\nu)} := \sup_{Q} \frac{1}{\nu(Q)} \int_{Q} |b(x) - \La b \Ra\ci Q|\, dx,
\end{equation*}
where the supremum ranges over all cubes $Q\subseteq \R^d$ and $\La b \Ra\ci Q = \frac{1}{|Q|} \int_Q b(x)\, dx$ is the unweighted average of $b$ on $Q.$
This weighted BMO space had already been investigated by B.~Muckenhoupt and R.~L.~Wheeden \cite{muckenhoupt-wheeden}.
Bloom \cite{bloom} showed that for any $1 < p < \infty,$ for any $A_p$ weights $\mu$ and $\lambda$ on $\R$ (see Section \ref{sec:Background} for the definition of $A_p$ weights), and for the weight $\nu:=\mu^{1/p}\lambda^{-1/p}$, which is easily seen to be an $A_2$ weight, one has the equivalence
\begin{equation}
\label{Bloom result}
  \Vert [H,b] \Vert\ci{L^p(\mu) \rightarrow L^p(\lambda)} \sim \Vert b \Vert\ci{\BMO(\nu)},
\end{equation}
where $H$ denotes the Hilbert transform and the implied constants depend only on $p$ and the $A_p$ characteristics of $\mu$ and $\lambda.$ Bloom's \cite{bloom} proof of the estimate $\Vert [H,b] \Vert\ci{L^p(\mu) \rightarrow L^p(\lambda)} \gtrsim \Vert b \Vert\ci{\BMO(\nu)}$ uses a different argument to that of Coifman--Rochberg--Weiss, and is based on a careful analysis of the set where $|b - \La b \Ra\ci{I}|$ is not too large with respect to the average oscillation on the interval $I,$ and the set where the former quantity is large using the adjoint commutator.

Much more recently, I.~Holmes, M.~T.~Lacey and B.~D.~Wick \cite{holmes-lacey-wick} considerably extended Bloom's result, proving that for any \cz operator $T$ on $\R^d$ and for any $A_p$ weights $\mu,\lambda$ on $\R^d$, $1<p<\infty$, there holds
\begin{equation}
\label{hlw-lower}
\Vert [T,b]\Vert\ci{L^p(\mu)\rightarrow L^{p}(\lambda)}\lesssim\ci{T,d,p}\Vert b\Vert\ci{\text{BMO}(\nu)},
\end{equation}
and that for the Riesz transforms $R^{(1)},\ldots,R^{(d)}$ on $\R^d$ there holds in addition
\begin{equation}
\label{hlw-upper}
\sum_{i=1}^{d}\Vert [R^{(i)},b]\Vert\ci{L^p(\mu)\rightarrow L^{p}(\lambda)}\gtrsim_{p,d}\Vert b\Vert\ci{\text{BMO}(\nu)},
\end{equation}
where in both \eqref{hlw-lower} and \eqref{hlw-upper}, $\nu:=\mu^{1/p}\lambda^{-1/p}$, and all implied constants depend on $p$ and the $A_p$ characteristics of $\mu,\lambda$ as well. Holmes--Lacey--Wick \cite{holmes-lacey-wick} proved and used in an essential way several new characterizations of the weighted BMO space $\text{BMO}(\nu)$ in the form of two-weight John--Nirenberg inequalities. More precisely, Muckenhoupt--Wheeden \cite{muckenhoupt-wheeden} had already showed that if $\nu$ is an $A_2$ weight on $\R^d$, then one has the equivalence
\begin{equation*}
  \Vert b \Vert\ci{\BMO(\nu)} \sim \sup_{Q} \left(\frac{1}{\nu(Q)} \int_{Q} |b(x) - \La b \Ra\ci Q|^2\,\nu^{-1}(x) dx\right)^{1/2},
\end{equation*}
where the implied constants depend only on $d$ and the $A_2$ characteristic of $\nu$. Holmes--Lacey--Wick \cite{holmes-lacey-wick} complemented this result, by showing that if $\mu,\lambda$ are $A_p$ weights on $\R^d$, $1<p<\infty$, and $\nu:=\mu^{1/p}\lambda^{-1/p}$, then one has the equivalences
\begin{equation*}
  \Vert b \Vert\ci{\BMO(\nu)} \sim \sup_{Q} \left(\frac{1}{\mu(Q)} \int_{Q} |b(x) - \La b \Ra\ci Q|^{p}\,\lambda(x) dx\right)^{1/p}
\end{equation*}
and
\begin{equation*}
\Vert b \Vert\ci{\BMO(\nu)} \sim \sup_Q \left(\frac{1}{\lambda'(Q)} \int_{Q} |b(x) - \La b \Ra\ci Q|^{p'}\,\mu'(x) dx\right)^{1/p'}
\end{equation*}
where $\mu':=\mu^{-1/(p-1)}$, $\lambda':=\lambda^{-1/(p-1)}$, and all implied constants depend only on $d,p$ and the $A_p$ characteristics of $\mu$ and $\lambda$. Their proof of these equivalences employed a duality result between \emph{dyadic} $\text{BMO}(\nu)$ and a certain dyadic weighted $H^1$ space that they established in the same work, as well as characterizations of two-weight BMO spaces in terms of two-weight boundedness of certain paraproducts. It should be noted that the results of \cite{holmes-lacey-wick} were very recently extended to the matrix-valued setting by J.~Isralowitz, S.~Pott and S.~Treil \cite{treil et al}. In fact, the authors of \cite{treil et al} proved there several results for the case of completely arbitrary (not necessarily $A_p$) matrix-valued weights, that are new even if one specializes to the fully scalar setting.

All results mentioned above concern one-parameter spaces. On the other hand, multiparameter (unweighted) BMO spaces were investigated extensively in the seminal papers by \mbox{S.-Y.~A.~Chang \cite{chang}} and R.~ Fefferman \cite{fefferman} in the late 1970s. These works concern mainly the biparameter product BMO space $\text{BMO}(\R\times\R)$ on the product space $\R\times\R$, defined by
\begin{equation}
\label{chang-fefferman BMO}
\Vert b\Vert\ci{\text{BMO}(\R\times\R)}:=\sup_{\Omega} \bigg(\frac{1}{|\Omega|} \sum_{\substack{R\in\bfD\\R\subseteq\Omega}}|\La b,w\ci{R}\Ra|^2 \bigg)^{1/2},
\end{equation}
where the supremum ranges over all non-empty open sets $\Omega$ of finite measure, $\bfD$ is the family of all dyadic rectangles in the product space $\R\times\R$ (with sides parallel to the coordinate axes), and $(w\ci{R})\ci{R\in\bfD}$ is some (regular enough) wavelet system adapted to dyadic rectangles. Note that an analogous definition of multiparameter product BMO can be given in any product space $\R^{\vec{d}}:=\R^{d_1}\times\dots\times\R^{d_t}$. Works \cite{chang} and \cite{fefferman} provide equivalent descriptions of biparameter product BMO spaces in terms of Carleson measures, extending the one-parameter classical ones. It is important to note that in definition \eqref{chang-fefferman BMO}, one \emph{cannot} restrict the supremum to rectangles. This follows from a famous counterexample due to L.~ Carleson \cite{carleson}, recounted in Fefferman's article \cite{fefferman} (see also \cite{blasco-pott-cc} or \cite{tao}).

The first breakthrough in the study of the relation between norms of commutators and the BMO norm of their symbol in the multiparameter setting was achieved by S.~H.~Ferguson and C.~Sadosky  \cite{ferguson-sadosky} . The authors of \cite{ferguson-sadosky} proved there that
\begin{equation*}
\Vert [H\otimes H,b]\Vert\ci{L^2(\R^2)\rightarrow L^2(\R^2)}\sim \Vert b\Vert\ci{\text{bmo}(\R\times\R)},
\end{equation*}
where $H$ is the Hilbert transform, $H\otimes H$ is a tensor product of Hilbert transforms (each acting on one of the two variables), and $\text{bmo}(\R\times\R)$ is the so-called \emph{little bmo} space,
\begin{equation*}
\Vert b\Vert\ci{\text{bmo}(\R\times\R)}:=\sup_{R}\frac{1}{|R|}\int_{R}|b(x)-\La b\Ra\ci{R}|dx,
\end{equation*}
where the supremum is taken over all rectangles $R$ in $\R\times\R$ (with sides parallel to the coordinate axes).
In \cite{ferguson-sadosky}, it is also established an upper bound for iterated commutators.
Namely, if $H_1$ and $H_2$ denote, respectively, the Hilbert transforms acting on the first and second variable, then one has the upper bound 
\begin{equation}
  \label{eq:ferguson-sadosky-upper}
  \Vert [H_1,[H_2,b]]\Vert\ci{L^2(\R^2)\rightarrow L^2(\R^2)}
  \lesssim \Vert b\Vert\ci{\text{BMO}(\R\times\R)}.
\end{equation}
Later, L.~Dalenc and Y.~Ou \cite{dalenc-ou} proved that if $T_1, T_2$ are (usual one-parameter) \cz operators acting on the first and second respectively variables of $\R^{\vec{d}}:=\R^{d_1}\times\R^{d_2}$ respectively, then
\begin{equation}
\label{dalenc-ou-upper}
\Vert [T_1,[T_2,b]]\Vert\ci{L^2(\R^{\vec{d}})\rightarrow L^2(\R^{\vec{d}})}\lesssim\ci{T_1,T_2,\vec{d}}\Vert b\Vert\ci{\text{BMO}(\R^{\vec{d}})}
\end{equation}
(in fact, Dalenc--Ou \cite{dalenc-ou} established an analogous result in any number of parameters).

The result of Ferguson--Sadosky \cite{ferguson-sadosky} was generalized and also extended to the weighted setting by I.~Holmes, S.~Petermichl and B.~D.~Wick \cite{holmes-petermichl-wick}. There, the authors proved that if $T$ is any biparameter \cz operator (aka Journ\'e operator) on $\R^{\vec{d}}:=\R^{d_1}\times\R^{d_2}$, then
\begin{equation}
\label{hpw-upper}
\Vert[T,b]\Vert\ci{L^{p}(\mu)\rightarrow L^{p}(\lambda)}\lesssim_{T,\vec{d},p} \Vert b\Vert\ci{\text{bmo}(\nu,\R^{\vec{d}})},
\end{equation}
and that if moreover $R^{(1)},\ldots,R^{(d)}$ are the Riesz tranforms on $\R^d$, then
\begin{equation}
\label{hpw-lower}
\sum_{k,l=1}^{d}\Vert[R^{(k)}\otimes R^{(l)},b]\Vert\ci{L^{p}(\mu)\rightarrow L^{p}(\lambda)}\gtrsim_{p,d} \Vert b\Vert\ci{\text{bmo}(\nu,\R^{d}\times\R^{d})},
\end{equation}
where in both results, $\mu$ and $\lambda$ are biparameter $A_p$ weights on $\R^{\vec{d}}$ (see Section \ref{sec:Background} for the definition), all implied constants depend on the biparameter $A_p$ characteristics of $\mu$, $\lambda$ as well, $\nu:=\mu^{1/p}\lambda^{-1/p}$, $1<p<\infty$, and
\begin{equation*}
\Vert b\Vert\ci{\text{bmo}(\nu,\R^{\vec{d}})}:=\sup_{R}\frac{1}{\nu(R)}\int_{R}|b(x)-\La b\Ra\ci{R}|dx,
\end{equation*}
where the supremum is again taken over all rectangles $R$ in $\R^{\vec{d}}$ (with sides parallel to the coordinate axes). Holmes--Petermichl--Wick \cite{holmes-petermichl-wick} proved and used in an essential way two-weight John--Nirenberg inequalities for the weighted space $\text{bmo}(\nu,\R^{\vec{d}})$ that are analogous to the ones in the one-parameter setting in \cite{holmes-lacey-wick}, in order to prove the lower bound \eqref{hpw-lower}. For the upper bound \eqref{hpw-upper}, Holmes--Petermichl--Wick \cite{holmes-petermichl-wick} defined and used the \emph{dyadic} product Bloom space $\BMOprodD(\nu,\R^{\vec{d}})$,
\begin{equation*}
\Vert b \Vert\ci{\BMOprodD(\nu,\R^{\vec{d}})} := \sup_\cU \left( \frac{1}{\nu(\sh(\cU))} \sum_{R\in\cU} |b_R|^2 \La \nu^{-1} \Ra_R \right)^{1/2},
\end{equation*}
where the supremum ranges over all non-empty collections $\cU$ of \emph{dyadic} rectangles in the biparameter product space $\R^{\vec{d}}$,
\begin{equation*}
 \sh(\cU) := \bigcup_{R\in\cU} R,
\end{equation*}
and $b\ci{R}:=\La b,h\ci{R}\Ra$, where $h\ci{R}$ is the cancellative in both-variables $L^2$-normalized Haar function over the dyadic rectangle $R$. In \cite{holmes-petermichl-wick} a duality result between $\BMOprodD(\nu,\R^{\vec{d}})$ and a weighted $H^1$ space is established, which is essential for the proof of the upper bound \eqref{hpw-upper}. The authors of \cite{holmes-petermichl-wick} also extended the upper BMO bound \eqref{dalenc-ou-upper} due to Dalenc--Ou  in \cite{dalenc-ou} to the case of multiparameter indexed unweighted BMO spaces.

Since then, weighted product BMO and multiparameter indexed weighted BMO upper bounds have been investigated and established in full generality by E.~Airta, K.~Li, H.~Martikainen, E.~Vuorinen \cite{airta}, \cite{martikainen et al}, \cite{li-martikainen-vuorinen1}, \cite{li-martikainen-vuorinen2}.

It is important to note that in all of the aforementioned works, the main tools for establishing upper bounds for norms of commutators with \cz operators in terms of the BMO norm of their symbol is the decomposition theorem of \cz operators in terms of Haar shifts and paraproducts that T.~Hyt\ddoto nen established and used to prove his $A_2$ theorem \cite{hytonena2}, as well as its extension to the multiparameter setting due to Martikainen \cite{martikainen}.

While upper bounds for norms of commutators in terms of multiparameter BMO norms of the symbol are by now well-understood, in the fully general two-weight setting, the picture for lower bounds remains incomplete even in the unweighted case.
The study of such lower bounds was addressed by S.~H.~Ferguson and M.~T.~Lacey \cite{ferguson-lacey}, who gave a converse to \eqref{eq:ferguson-sadosky-upper}, namely
\begin{equation}
  \label{eq:ferguson-lacey-lower}
  \Vert [H_1,[H_2,b]]\Vert\ci{L^2(\R^2)\rightarrow L^2(\R^2)}
  \gtrsim \Vert b\Vert\ci{\text{BMO}(\R\times\R)}.
\end{equation}
The proof of this fact was based on new and beautiful arguments on this matter.
More recently, Dalenc--Petermichl \cite{dalenc-petermichl} proved similar results for iterated commutators of Riesz transforms.
Moreover, Y.~Ou, S.~Petermichl and E.~Strouse \cite{ou-petermichl-strouse} extended the result in Dalenc--Petermichl \cite{dalenc-petermichl} to the case of multiparameter indexed BMO spaces, which are between little bmo and multiparameter Chang--Fefferman product BMO. 
Both works \cite{dalenc-petermichl} and \cite{ou-petermichl-strouse} rely on the result of Ferguson--Lacey \cite{ferguson-lacey}.

There are as well some lower bounds that do not rely on \eqref{eq:ferguson-lacey-lower}, like \cite{holmes-lacey-wick}, \cite{holmes-petermichl-wick}, \cite{treil et al}.
However, all these employ variants of the original argument by Coifman--Rochberg--Weiss \cite{coifman-rochberg-weiss}. 	Arguments of this type rely on the availability of explicit ``oscillatory'' expressions for BMO norms.
While such expressions are indeed available in the one-parameter setting, and also in the case of the little bmo space, they are not at all available in the case of product BMO (in the sense of Chang--Fefferman), making investigating such lower bounds significantly harder.

In another direction, \'O.~Blasco and S.~Pott \cite{blasco-pott} related dyadic biparameter product BMO norms to iterated commutators of Haar multipliers. More precisely, consider the set $\Sigma$ of all finitely supported maps $\sigma\colon \cD \rightarrow \{-1,0,1\},$ and for each $\sigma \in \Sigma$ consider its Haar multiplier $T_\sigma$ on $L^2(\R)$ (see Section \ref{sec:HaarMultipliers} for precise definitions). Furthermore, consider Haar multipliers $T^1_{\sigma_1}$ and $T^2_{\sigma_2}$ acting on $L^2(\R^2)$ separately on each variable. Blasco--Pott \cite{blasco-pott} show that
\begin{equation*}
\sup_{\sigma_1,\sigma_2\in\Sigma} \Vert [T^1_{\sigma_1},[T^2_{\sigma_2},b]] \Vert\ci{L^2(\R^2)\rightarrow L^2(\R^2)} \sim \Vert b \Vert\ci{\BMOprodD}.
\end{equation*}
It should be noted that the supremum over all signs enables Blasco--Pott \cite{blasco-pott} to eliminate error terms by taking average over all signs and then use orthogonality arguments in order to conclude their result.

The main goal of the present paper is to extend the aforementioned result by Blasco--Pott \cite{blasco-pott} to the weighted setting, and to the full range of exponents $1<p<\infty$. More precisely, we show the following.
\begin{thm}
\label{t: HaarmultBMO}
Let $1 < p < \infty.$
Consider a function $b \in L^1\ti{loc}(\R^2),$ dyadic biparameter $A_p$ weights $\mu,$ $\lambda$ and define $\nu:=\mu^{1/p}\lambda^{-1/p}$. Then
\begin{equation}
\label{eq:HaarMultBMOEquiv}
\sup_{\sigma_1,\sigma_2\in\Sigma}\Vert[T_{\sigma_1}^1,[T_{\sigma_2}^2,b]]\Vert\ci{L^p(\mu)\rightarrow L^p(\lambda)}\sim\Vert b\Vert\ci{\emph{BMO}\ci{\emph{prod},\bfD}(\nu)},
\end{equation}
where the implied constants depend only on $p,$ $[\mu]\ci{A_p,\bfD}$ and $[\lambda]\ci{A_p,\bfD}$.
\end{thm}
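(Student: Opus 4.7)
The plan is to establish the two inequalities in \eqref{eq:HaarMultBMOEquiv} separately, with the paraproduct characterization of dyadic product BMO in the Bloom setting (advertised in the abstract) as the main tool.

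For the upper bound, I would expand $b$ in its biparameter Haar series and compute the bicommutator $[T^{1}_{\sigma_{1}},[T^{2}_{\sigma_{2}},b]]f$ for a test function $f\in L^{p}(\mu)$ using the fact that each $T_{\sigma_{i}}$ acts diagonally on the Haar basis. Products of the form $h\ci{I}\cdot h\ci{I'}$ for nested $I'\subsetneq I\in\cD$ collapse via $h\ci{I}\cdot\1\ci{I'}=\pm|I|^{-1/2}\1\ci{I'}$ to functions of controlled Haar support, and analogously in the second variable. This yields an explicit expansion of the bicommutator as a finite linear combination of biparameter dyadic paraproducts with symbol $b$, each mixing cancellative and non-cancellative Haar components in the two directions. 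The double-commutator structure ensures that the undesirable terms without cancellation coming from $b$ cancel exactly, leaving only genuine paraproducts. Each surviving paraproduct is $L^{p}(\mu)\to L^{p}(\lambda)$-bounded with norm $\lesssim\Vert b\Vert\ci{\BMOprodD(\nu)}$ uniformly in $\sigma_{1},\sigma_{2}$, by the weighted paraproduct characterization of $\BMOprodD(\nu)$.

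For the lower bound, my idea is to isolate the pure paraproduct $\Pi\ci{b}f:=\sum_{R\in\bfD}b\ci{R}\La f\Ra\ci{R}h\ci{R}$ from the family of bicommutators by averaging the symbols $\sigma_{1},\sigma_{2}$ over finite Rademacher-type $\pm 1$ subfamilies of $\Sigma$. The expectation of $[T^{1}_{\sigma_{1}},[T^{2}_{\sigma_{2}},b]]$ under such averaging concentrates, through the orthogonality relation $\E[\sigma_{i}(I)\sigma_{i}(I')]=\delta\ci{I,I'}$, on the diagonal Haar indices, producing a nonzero multiple of $\Pi\ci{b}$ plus paraproducts of lower cancellation rank whose $L^{p}(\mu)\to L^{p}(\lambda)$ norms are already controlled by $\Vert b\Vert\ci{\BMOprodD(\nu)}$. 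Since the supremum over $\sigma_{1},\sigma_{2}\in\Sigma$ dominates any such expectation, this yields $\Vert\Pi\ci{b}\Vert\ci{L^{p}(\mu)\to L^{p}(\lambda)}\lesssim\sup_{\sigma_{1},\sigma_{2}}\Vert[T^{1}_{\sigma_{1}},[T^{2}_{\sigma_{2}},b]]\Vert\ci{L^{p}(\mu)\to L^{p}(\lambda)}$, after which the paraproduct characterization produces $\Vert\Pi\ci{b}\Vert\ci{L^{p}(\mu)\to L^{p}(\lambda)}\gtrsim\Vert b\Vert\ci{\BMOprodD(\nu)}$, finishing the argument.

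The main obstacle will be the lower bound. Because dyadic product BMO admits no classical oscillatory description (by Carleson's counterexample), the Coifman--Rochberg--Weiss-type arguments used in \cite{coifman-rochberg-weiss,holmes-lacey-wick,holmes-petermichl-wick} cannot be adapted directly: one must go through the paraproduct/two-weight John--Nirenberg route. This requires first establishing a quantitative paraproduct characterization of $\BMOprodD(\nu)$ with control in terms of $p,[\mu]\ci{A_p,\bfD},[\lambda]\ci{A_p,\bfD}$, most naturally via a duality between $\BMOprodD(\nu)$ and an appropriate weighted dyadic $H^{1}$ space in the spirit of \cite{holmes-petermichl-wick}, together with two-weight John--Nirenberg-type inequalities; and second, verifying that the sign-averaging argument cleanly separates the pure paraproduct from the lower-order pieces despite the richer combinatorics of biparameter Haar expansions, with all implicit constants remaining quantitative in the $A_p$ data. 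Once these ingredients are in place, the proof assembles along the lines described above.
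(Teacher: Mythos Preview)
Your upper-bound sketch is fine and matches the paper's approach in spirit (the paper packages the paraproduct expansion via the symmetrized paraproduct $\Lambda_b=\Pi_b^{(11)}+\Pi_b^{(00)}+\Pi_b^{(10)}+\Pi_b^{(01)}$ and the identity $[T_{\sigma_1}^1,[T_{\sigma_2}^2,b]]=[T_{\sigma_1}^1,[T_{\sigma_2}^2,\Lambda_b]]$, then uses uniform $L^p(w)$-boundedness of Haar multipliers; a direct expansion would also work).

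Your lower bound, however, has a genuine gap. The bicommutator expands as
\[
[T_{\sigma_1}^1,[T_{\sigma_2}^2,b]]=\sum_{I,J\in\cD}\sigma_1(I)\,\sigma_2(J)\,[Q_I^1,[Q_J^2,b]],
\]
which is \emph{linear} in each of $\sigma_1,\sigma_2$. The Rademacher expectation of the operator is therefore identically zero, not a multiple of $\Pi_b$; the relation $\E[\sigma_i(I)\sigma_i(I')]=\delta_{I,I'}$ never comes into play because no term carries a product $\sigma_i(I)\sigma_i(I')$. And even if some averaging \emph{did} produce $\Pi_b$ plus a remainder controlled by $\Vert b\Vert\ci{\BMOprodD(\nu)}$, subtracting that remainder would be circular, yielding only $\Vert b\Vert\ci{\BMOprodD(\nu)}\lesssim\sup\Vert[\cdot,\cdot]\Vert+\Vert b\Vert\ci{\BMOprodD(\nu)}$. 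The paper instead averages the \emph{norm}: one takes $\E_{\sigma_1,\sigma_2}\Vert[T_{\sigma_1}^1,[T_{\sigma_2}^2,b]]f\Vert_{L^p(\lambda)}^p$, applies a biparameter Khintchine inequality pointwise to obtain $\int_{\R^2}\bigl(\sum_{I,J}|[Q_I^1,[Q_J^2,b]]f|^2\bigr)^{p/2}\lambda\,dx$, and then bounds this square-function expression from below using the weighted vector-valued strong maximal inequality together with the identities $Q_I^1\Lambda_bQ_I^1=0$, $Q_J^2\Lambda_bQ_J^2=0$, which collapse $Q_I^1Q_J^2[Q_I^1,[Q_J^2,\Lambda_b]]$ to $Q_I^1Q_J^2\Lambda_b$. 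This gives $\sup\Vert[\cdot,\cdot]\Vert\gtrsim\Vert\Lambda_b\Vert_{L^p(\mu)\to L^p(\lambda)}$; testing $\Lambda_b$ on $\1_{\Omega}$ and invoking the two-weight John--Nirenberg theorem then finishes. The missing ingredients in your plan are precisely the Khintchine step (needed to handle $p\neq 2$) and the vector-valued maximal estimate---these are what replace the $L^2$-orthogonality that Blasco--Pott exploit directly.
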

The proof of this theorem will follow similar steps to that of the result due to Blasco--Pott.
Namely, first we show that the supremum in the left-hand side of \eqref{eq:HaarMultBMOEquiv} is equivalent to the norm of a certain operator defined in terms of paraproducts.
Then we show the equivalence between the previous operator norm and the BMO norm of the symbol $b.$
Nonetheless, here we must use additional techniques to overcome the lack of orthogonality for $p \neq 2.$
In particular, we make use of a multiparameter extension of the classical Khintchine's inequality together with vector-valued estimates.
Another possibility that also allows one to circumvent this difficulty would be to use duality coupled with Hölder's inequality, together with vector-valued estimates.
Moreover, to be able to handle the weighted spaces appearing in Theorem \ref{t: HaarmultBMO}, we establish equivalent characterizations of dyadic product Bloom BMO in the spirit of \cite{holmes-lacey-wick}, \cite{holmes-petermichl-wick}. More precisely, fix $1<p<\infty$ and two dyadic biparameter $A_p$ weights $\mu$ and $\lambda$ on $\R^2.$
Given $b \in L^1\ti{loc}(\R^2)$, define the dyadic two-weight Bloom product $\BMO$ norm
\begin{equation*}
  \Vert b \Vert\ci{\BMOprodD(\mu,\lambda,p)} := \sup_\cU \frac{1}{(\mu(\sh(\cU)))^{1/p}} \Vert S\ci{\cU}(b)\Vert\ci{L^{p}(\lambda)},
\end{equation*}
where the supremum ranges again over all non-empty collections $\cU$ of dyadic rectangles in $\R\times\R$, and $S\ci{\cU}(b)$ is the biparameter dyadic square function of $b$ restricted to the collection $\cU$ (see Section \ref{sec:Background} for precise definitions). We show the following two-weight John--Nirenberg inequalities.

\begin{thm}
\label{t: equivBMO}
Let $1 < p < \infty.$
Consider dyadic biparameter $A_p$ weights $\mu,$ $\lambda$ and define $\nu:=\mu^{1/p}\lambda^{-1/p}$. Then
\begin{equation*}
\Vert b\Vert\ci{\emph{BMO}\ci{\emph{prod},\bfD}(\nu)}\sim\Vert b\Vert\ci{\emph{BMO}\ci{\emph{prod},\bfD}(\mu,\lambda,p)},
\end{equation*}
where the implied constants depend only on $p,$ $[\mu]\ci{A_p,\bfD}$ and $[\lambda]\ci{A_p,\bfD}$.
\end{thm}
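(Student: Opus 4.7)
The plan is to reduce Theorem \ref{t: equivBMO} to the chain of norm equivalences
\[
\|b\|\ci{\BMOprodD(\mu,\lambda,p)} \sim \|\Pi_b\|\ci{L^p(\mu)\to L^p(\lambda)} \sim \|b\|\ci{\BMOprodD(\nu)},
\]
where $\Pi_b$ is the full biparameter dyadic paraproduct
\[
\Pi_b f := \sum_{R\in\bfD} b\ci{R}\,\La f\Ra\ci{R}\,h\ci{R}.
\]

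For $\|b\|\ci{\BMOprodD(\mu,\lambda,p)}\lesssim \|\Pi_b\|$, test $\Pi_b$ on $f=\1\ci{\sh(\cU)}$ for any collection $\cU$ of dyadic rectangles. Since $\La\1\ci{\sh(\cU)}\Ra\ci{R}=1$ for each $R\subseteq\sh(\cU)$, the pointwise comparison $S(\Pi_b\1\ci{\sh(\cU)})\geq S\ci{\cU}(b)$ combined with the weighted biparameter Littlewood--Paley equivalence $\|F\|\ci{L^p(\lambda)}\sim\|S(F)\|\ci{L^p(\lambda)}$ (valid since $\lambda$ is biparameter dyadic $A_p$) yields $\|S\ci{\cU}(b)\|\ci{L^p(\lambda)}\leq\|\Pi_b\|\,\mu(\sh(\cU))^{1/p}$. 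The reverse $\|\Pi_b\|\lesssim\|b\|\ci{\BMOprodD(\mu,\lambda,p)}$ reduces, via Littlewood--Paley, duality, and a biparameter Pipher-type iterated stopping time scheme, to a biparameter weighted Carleson embedding for the sequence $(|b\ci{R}|^{2})\ci{R\in\bfD}$, whose Carleson property against arbitrary open sets is precisely what $\|b\|\ci{\BMOprodD(\mu,\lambda,p)}$ encodes.

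For $\|\Pi_b\|\lesssim \|b\|\ci{\BMOprodD(\nu)}$, invoke the biparameter weighted dyadic $H^{1}(\nu)$--$\BMO(\nu)$ duality of Holmes--Petermichl--Wick \cite{holmes-petermichl-wick}: for $f\in L^p(\mu)$ and $g\in L^{p'}(\lambda^{1-p'})$, rewrite
\[
\La\Pi_b f,g\Ra = \La b,\,\textstyle\sum_{R\in\bfD}\La f\Ra\ci{R}\,g\ci{R}\,h\ci{R}\Ra,
\]
so that duality bounds the left-hand side by $\|b\|\ci{\BMOprodD(\nu)}$ times the weighted $H^{1}(\nu)$-norm of the inner Haar series. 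A pointwise Cauchy--Schwarz estimates its biparameter square function by $M\ti{str}(f)\cdot S(g)$, and H\"older against the factorization $\nu=\mu^{1/p}\lambda^{-1/p}$, together with the weighted Fefferman--Stein inequality for the strong maximal function (valid since $\mu$ is biparameter dyadic $A_p$) and weighted Littlewood--Paley (valid since $\lambda^{1-p'}$ is biparameter dyadic $A_{p'}$), produce $\|f\|\ci{L^p(\mu)}\|g\|\ci{L^{p'}(\lambda^{1-p'})}$. The reverse inequality $\|\Pi_b\|\gtrsim\|b\|\ci{\BMOprodD(\nu)}$ is obtained by constructing, for a near-extremal $\cU$, test functions $f$, $g$ built from $\1\ci{\sh(\cU)}$ and from $(b\ci{R})\ci{R\in\cU}$ so that $\La\Pi_b f,g\Ra$ recovers -- up to $A_p$ constants -- the Bloom sum $\sum_{R\in\cU}|b\ci{R}|^{2}\La\nu^{-1}\Ra\ci{R}$, with $L^p(\mu)$ and $L^{p'}(\lambda^{1-p'})$ norms of $f,g$ comparable to $\nu(\sh(\cU))^{1/2}$ via a biparameter Pipher-type stopping time.

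The main obstacle -- absent in the one-parameter case of \cite{holmes-lacey-wick} and in the little bmo setting of \cite{holmes-petermichl-wick} -- is that Chang--Fefferman product BMO admits no pointwise oscillation description: Carleson's counterexample \cite{carleson} forbids any reduction of the supremum defining $\|\cdot\|\ci{\BMOprodD(\nu)}$ to rectangles, so the argument must accommodate arbitrary dyadic collections $\cU$ uniformly. This is why the weighted biparameter $H^{1}(\nu)$--$\BMO(\nu)$ duality of \cite{holmes-petermichl-wick} is indispensable as a surrogate for a John--Nirenberg-style level set approach, and why biparameter Pipher-type stopping time decompositions have to replace the one-parameter Calder\'on--Zygmund principal cubes. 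A secondary technical point is that $\nu$ is itself biparameter dyadic $A_2$ (an immediate consequence of rectangle-wise H\"older applied to $\mu^{1/p}\cdot\lambda^{-1/p}$), which secures that all the weighted Littlewood--Paley, Fefferman--Stein, and Carleson embedding tools used above are available in the $\nu$-setting.
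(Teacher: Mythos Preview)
Your overall architecture---reduce Theorem \ref{t: equivBMO} to the chain
\[
\|b\|\ci{\BMOprodD(\mu,\lambda,p)}\;\sim\;\|\Pi_b\|\ci{L^p(\mu)\to L^p(\lambda)}\;\sim\;\|b\|\ci{\BMOprodD(\nu)}
\]
---coincides with the paper's, and two of the four inequalities (testing on $\1\ci{\sh(\cU)}$; citing Holmes--Petermichl--Wick for $\|\Pi_b\|\lesssim\|b\|\ci{\BMOprodD(\nu)}$) are handled just as in the paper. But the remaining two inequalities are where the real content of the theorem lies, and your treatment of both is a genuine gap, not merely a sketch.

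First, the bound $\|\Pi_b\|\ci{L^p(\mu)\to L^p(\lambda)}\lesssim\|b\|\ci{\BMOprodD(\mu,\lambda,p)}$ does \emph{not} follow from a ``biparameter Pipher-type iterated stopping time scheme'' reducing to a Carleson embedding. In the two-weight biparameter setting there is no off-the-shelf Carleson embedding theorem saying that the testing condition over arbitrary open sets controls the paraproduct for all $1<p<\infty$; this is precisely what must be proved. The paper establishes this inequality \emph{only for $1<p\le 2$} (Proposition \ref{prop:ParaproductTwoWeightEquiv}), and the argument is not a stopping-time decomposition at all: it is a pointwise Lorentz-space trick, using that $L^{2/p,1}\hookrightarrow L^{2/p}$ when $2/p\ge 1$ to linearize the square function and reduce to level sets of the strong maximal function. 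This device \emph{fails outright} for $p>2$. For that range the paper does not prove this equivalence directly; instead it proves $\|b\|\ci{\BMOprodD(\nu)}\lesssim\|b\|\ci{\BMOprodD(\mu,\lambda,p)}$ (Proposition \ref{p: equivBMO-pgtr2}) via the Triebel--Lizorkin square function $S_w$ of Lemma \ref{l: tl-upper bound} and Corollary \ref{c: tl-lower bound}, together with the one-weight result $\|c\|\ci{\BMOprodD}\lesssim\|c\|\ci{\BMOprodD(\mu,\mu,p)}$ of Airta--Li--Martikainen--Vuorinen \cite{martikainen et al}. Your proposal contains no trace of either ingredient.

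Second, the lower bound $\|\Pi_b\|\gtrsim\|b\|\ci{\BMOprodD(\nu)}$ is again not a matter of ``constructing test functions via a Pipher-type stopping time''. The paper's argument (Proposition \ref{prop:ParaproductOneWeightEquiv}) is a \emph{bilinear} trick: one estimates $B(f,g)=\sum_{R\in\cU}|b\ci{R}|^2\La f\Ra\ci{R}\La g\Ra\ci{R}\La\nu^{-1}\Ra\ci{R}$ by Cauchy--Schwarz and H\"older, obtaining $B(f,g)\le C_1(b)\,C_2(b)\,\|f\|\ci{L^p(\mu)}\|g\|\ci{L^{p'}(\lambda')}$, where $C_1,C_2$ are the paraproduct norms for $(\mu,\lambda,p)$ and $(\lambda',\mu',p')$. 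Plugging in the explicit choices $f=\mu^{-1/p}\nu^{1/p}\1\ci{\sh(\cU)}$, $g=\lambda^{1/p}\nu^{1/p'}\1\ci{\sh(\cU)}$ and using $\La w\Ra\ci{R}\La w^{-1}\Ra\ci{R}\ge 1$ yields $\|b\|\ci{\BMOprodD(\nu)}^2\le C_1(b)\,C_2(b)$; combined with the already-known reverse inequalities $C_i(b)\lesssim\|b\|\ci{\BMOprodD(\nu)}$ this closes the loop. No stopping times enter, and no single test function for $\Pi_b$ alone suffices---it is essential to pair the $p$- and $p'$-paraproduct bounds.
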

While the formulation of Theorem \ref{t: equivBMO} reflects that of the two-weight John--Nirenberg inequalities in the one-parameter setting \cite{holmes-lacey-wick} and in the case of little bmo \cite{holmes-petermichl-wick}, its proof addresses several new difficulties not present in \cite{holmes-lacey-wick}, \cite{holmes-petermichl-wick}, and requires new ideas.
We split this proof in several steps.
For $1 < p \leq 2$ we show that the two-weight norm $\Vert b\Vert\ci{\text{BMO}\ci{\text{prod},\bfD}(\mu,\lambda,p)}$ is equivalent to a certain paraproduct norm, as an operator from $L^p(\mu)$ to $L^p(\lambda).$
Then, we prove the equivalence between the norm of this paraproduct and the one-weight norm $\Vert b\Vert\ci{\text{BMO}\ci{\text{prod},\bfD}(\nu)}$ for any $1 < p < \infty.$
Although the remaining equivalence, that is for $2 < p < \infty,$ is an immediate consequence of Hölder's inequality in the unweighted case, in our setting it requires the use of the so called (biparameter) Triebel--Lizorkin square function (see Section \ref{sec:Background} for the definition and basic properties).
In addition, it is essential for this step of the proof to make use of an equivalence between one-weight and unweighted product BMO due to E.~Airta, K.~Li, H.~Martikainen and E.~Vuorinen \cite{martikainen et al}.
In particular, the equivalence from \cite{martikainen et al} that we use corresponds to the particular case of our Theorem \ref{t: equivBMO} when $p > 2$ and $\mu = \lambda.$
These two ingredients are necessary to overcome the lack of a ``two-weight Hölder's inequality''.

Note that John--Nirenberg inequalities hold for weighted little bmo, for all $1 < p < \infty$ (see \cite{holmes-petermichl-wick}).
Moreover, it was already known that they also hold in unweighted product BMO as well, for all $1 < p < \infty$ (see \cite{tao} for a proof using atomic decompositions).
Finally, for rectangular BMO, John--Nirenberg inequalities do not hold even in the unweighted case, and for any $1<p<\infty$ (see \cite{blasco-pott}).

It should also be noted that all of our results hold for functions defined on any multiparameter product space $\R^{d_1}\times\dots\times\R^{d_t},$ with identical or similar proofs.
Although for simplicity we restrict the main part of this work to the case of functions on $\R\times\R,$ we also indicate how to modify the arguments for the general multiparameter setting when appropriate.
Moreover, we explain briefly how to extend our results to the whole scale of indexed spaces between little bmo and product BMO in the general multiparameter setting, with the appropriate iterated commutator in each case.

\textbf{Plan of the paper.} The article is structured as follows. In Section \ref{sec:Background} the reader can find the notations and definitions that will be used in the rest of the paper. In Section \ref{sec:WeightedEquivalences} we prove the two-weight John--Nirenberg inequalities of Theorem \ref{t: equivBMO}. In Section \ref{sec:HaarMultipliers} we prove Theorem \ref{t: HaarmultBMO}.
Finally, in Section \ref{sec:IndexedSpaces} we extend Theorem \ref{t: HaarmultBMO} to the whole scale of indexed spaces between little bmo and product BMO in the general multiparameter setting, with the appropriate iterated commutator in each case.

\textbf{Acknowledgements.} The authors would like to thank Professor Stefanie Petermichl for suggesting the problem of this paper, and for the valuable feedback and guidance in the process of writing it.

\section{Background and notation}
\label{sec:Background}

We collect here some notation, definitions and a few basic facts that will be used repeatedly in the sequel. While all of them are also valid in any multiparameter product space $\R^{d_1}\times\dots\times\R^{d_t}$, with the obvious modifications, for simplicity we restrict ourselves to the case of the product space $\R\times\R$.
 
\subsection{Dyadic intervals and dyadic rectangles} We denote by $\cD$ the set of all dyadic intervals in $\R$,
\begin{equation*}
\cD:=\lbrace [m2^{k},(m+1)2^{k}):~k,m\in\Z\rbrace.
\end{equation*}
We also denote by $\bfD$ the set of all dyadic rectangles in $\R^2$,
\begin{equation*}
\bfD:=\lbrace I\times J:~I,J\in\cD\rbrace.
\end{equation*}
For any $E\subseteq\R\times\R$ we denote
\begin{equation*}
\bfD(E):=\lbrace R\in\bfD:~ R\subseteq E\rbrace.
\end{equation*}
Note that if $E\in\bfD$, then $E\in\bfD(E)$.

\subsection{Haar systems}

\subsubsection{Haar system on \texorpdfstring{$\R$}{R}} For any $I\in\cD$, $h^{(0)}\ci{I}, h^{(1)}\ci{I}$ will denote, respectively, the $L^2$-normalized \emph{cancellative} and \emph{non-cancellative} Haar functions over the interval $I\in\cD$, that is
\begin{equation*}
h^{(0)}\ci{I}:=\frac{\1\ci{I_{+}}-\1\ci{I_{-}}}{\sqrt{|I|}},\qquad h^{(1)}\ci{I}:=\frac{\1\ci{I}}{\sqrt{|I|}}
\end{equation*}
(so $h^{(0)}\ci{I}$ has mean 0). For simplicity we denote $h\ci{I}:=h\ci{I}^{(0)}$. For any function $f\in L^1\ti{loc}(\R)$, we denote $f\ci{I}:=\La f,h\ci{I}\Ra$, $I\in\cD$. We will also denote by $Q\ci{I}$ the projection on the one-dimensional subspace spanned by $h\ci{I}$,
\begin{equation*}
Q\ci{I}f:=f\ci{I}h\ci{I},\qquad f\in L^1\ti{loc}(\R).
\end{equation*}
It is well-known that one has the expansion
\begin{equation*}
f=\sum_{I\in\cD}f\ci{I}h\ci{I},\qquad\forall f\in L^2(\R)
\end{equation*}
in the $L^2(\R)$-sense, and that the system $\lbrace h\ci{I}\rbrace\ci{I\in\cD}$ forms an orthonormal basis for $L^2(\R)$. It is then easy to see that for any $I\in\cD$ there holds
\begin{equation*}
\1\ci{I}(f-\La f\Ra\ci{I})=\sum_{\substack{J\in\cD\\J\subseteq I}}f\ci{J}h\ci{J}.
\end{equation*}

\subsubsection{Haar system on the product space \texorpdfstring{$\R\times\R$}{RxR}} If $R=I\times J$ is a dyadic rectangle in $\R^2$, we denote by $h\ci{R}^{(\e_1\e_2)}$ any of the four $L^2$-normalized Haar functions over $R$,
\begin{equation*}
h\ci{R}^{(\e_1\e_2)}:=h\ci{I}^{(\e_1)}\otimes h\ci{J}^{(\e_2)},\qquad \e_1,\e_2\in\lbrace0,1\rbrace,
\end{equation*}
that is
\begin{equation*}
h\ci{R}^{(\e_1\e_2)}(t,s)=h\ci{I}^{(\e_1)}(t)h\ci{J}^{(\e_2)}(s),\qquad (t,s)\in\R^2.
\end{equation*}
For simplicity we denote $h\ci{R}:=h\ci{R}^{(00)}$. For any function $f\in L^1\ti{loc}(\R^2)$, we denote
\begin{equation*}
f\ci{R}^{(\e_1\e_2)}:=\La f,h^{(\e_1\e_2)}\ci{R}\Ra,\qquad R\in\bfD,\qquad \e_1,\e_2\in\lbrace0,1\rbrace,
\end{equation*}
and we will often use the simplification $f\ci{R}:=\La f,h\ci{R}\Ra$. We will also denote by $Q\ci{R}$ the projection on the one-dimensional subspace spanned by $h\ci{R}$,
\begin{equation*}
Q\ci{R}f:=f\ci{R}h\ci{R},\qquad f\in L^1\ti{loc}(\R^2).
\end{equation*}
From the corresponding one-dimensional facts we immediately deduce the expansion
\begin{equation*}
f=\sum_{R\in\bfD}f\ci{R}h\ci{R},\qquad\forall f\in L^2(\R^2)
\end{equation*}
in the $L^2(\R^2)$-sense, and that the system $\lbrace h\ci{R}\rbrace\ci{R\in\bfD}$ forms an orthonormal basis for $L^2(\R^2)$. It is then easy to see by direct computation, following a reasoning similar to that of the inclusion-exclusion principle, that for any $R=I\times J\in\bfD$ there holds
\begin{equation*}
\sum_{R'\in\bfD(R)}f\ci{R'}h\ci{R'}(t,s)=\1\ci{R}(t,s)(f(t,s)-\La f(\cdot,s)\Ra\ci{I}-\La f(t,\fdot)\Ra\ci{J}+\La f\Ra\ci{R}).
\end{equation*}

Finally, for $I,J\in\cD$ we denote by $Q\ci{I}^1,Q\ci{J}^2$ the operators acting on functions $f\in L^1\ti{loc}(\R^2)$ by
\begin{equation*}
Q\ci{I}^{1}f(t,s)=Q\ci{I}(f(\fdot,s))(t),\qquad Q\ci{J}^{2}f(t,s)=Q\ci{J}(f(t,\fdot))(s),\qquad\text{for a.e. }(t,s)\in\R\times\R.
\end{equation*}
Thus, if $R=I\times J$ then $Q\ci{R}=Q^{1}\ci{I}Q^2\ci{J}=Q^{2}\ci{J}Q^{1}\ci{I}$. Note that $(Q\ci{I}^1)^2=Q\ci{I}^1$ and $(Q\ci{J}^2)^2=Q\ci{J}^2$.

\subsection{\texorpdfstring{$A_p$}{Ap} weights}

By weight we always mean a locally integrable, a.e. positive function. We fix in what follows $1<p<\infty$.
Given a weight $w,$ we consider the weighted Lebesgue space $L^p(w),$ that is the space of $p$-integrable functions with respect to the measure $w(x)\, dx.$
In other words, we say that a function $f$ belongs to $L^p(w)$ if
\begin{equation*}
    \Vert f \Vert\ci{L^p(w)} := \left(\int |f(x)|^p w(x)\, dx\right)^{1/p} < \infty.
\end{equation*}
The dual of $L^p(w)$ is the weighted space $L^{p'}(w')$ under the usual $L^2$-pairing $\La f,g \Ra = \int f(x) \overline{g(x)}\, dx,$ where $p'$ is the H\ddoto lder conjugate of $p$ (that is $\frac{1}{p} + \frac{1}{p'} = 1$) and $w'(x) = (w(x))^{1-p'}$ is the conjugate weight to $w.$
Note here that, in the particular case $p = 2,$ one has that the dual of $L^2(w)$ is $L^2(w^{-1}).$

\subsubsection{\texorpdfstring{$A_p$}{Ap} weights on \texorpdfstring{$\R$}{R}} Consider a weight $w$ on $\R$. It should be noted that all that follows would still hold if we considered weights on $\R^d,$ for any $d$, just by  substituting intervals of $\R$ by cubes of $\R^d.$ We define the \emph{Muckenhoupt $A_p$ characteristic} of $w,$ denoted by $[w]\ci{A_p},$ as
\begin{equation*}
[w]\ci{A_p}:=\sup_{I}~\La w\Ra\ci{I}\La w^{-1/(p-1)}\Ra\ci{I}^{p-1} = \sup_{I}~\La w\Ra\ci{I}\La w'\Ra\ci{I}^{p-1},
\end{equation*}
where the supremum is taken over all intervals $I$ in $\R$. We define a dyadic version of this by
\begin{equation*}
[w]\ci{A_p,\cD}:=\sup_{I\in\cD}~\La w\Ra\ci{I}\La w^{-1/(p-1)}\Ra\ci{I}^{p-1}.
\end{equation*}
We say that $w$ is an $A_p$ weight, respectively a dyadic $A_p$ weight, if $[w]\ci{A_p}<\infty$, respectively $[w]\ci{A_p,\cD}<\infty$. Note that a very easy application of Jensen's inequality gives $[w]\ci{A_p,\cD}\geq1$, see \cite[Lemma 4.1]{Convex} for details.
Observe as well that $w$ is an $A_p$ weight if and only if $w'$ is an $A_{p'}$ weight, and in this case $[w']\ci{A_{p'}} = [w]\ci{A_p}^{p'-1}.$
The analogous fact is also true in the dyadic case.

It is a classical result that $[w]\ci{A_p}<\infty$ if and only if the Hardy--Littlewood maximal function $M$ given by
\begin{equation*}
Mf:=\sup_{I}~\La|f|\Ra\ci{I}\1\ci{I},
\end{equation*}
where supremum is taken over all intervals $I$ in $\R$, is bounded as an operator from $L^{p}(w)$ into itself, and that in fact one has the estimate
\begin{equation}
\label{Ap-maximal-function-onepar}
\Vert M\Vert\ci{L^{p}(w)\rightarrow L^{p}(w)}\lesssim_{p}[w]\ci{A_p}^{1/(p-1)}.
\end{equation}
A dyadic version of this is also true for the dyadic Hardy--Littlewood maximal function $M\ci{\cD}$ given by
\begin{equation*}
M\ci{\cD}f:=\sup_{I\in\cD}~\La|f|\Ra\ci{I}\1\ci{I}.
\end{equation*}

\subsubsection{Biparameter \texorpdfstring{$A_p$}{Ap} weights on \texorpdfstring{$\R^2$}{R2}} Consider now a weight $w$ on $\R\times\R$. As before, all that follows would be equally valid for weights on any $\R^{\vec{d}} := \R^{d_1}\times\dots\times\R^{d_t}$, with the obvious modifications. We define the \emph{biparameter Muckenhoupt $A_p$ characteristic} $[w]\ci{A_p}$ of $w$ by
\begin{equation*}
[w]\ci{A_p}:=\sup_{R}~\La w\Ra\ci{R}\La w^{-1/(p-1)}\Ra\ci{R}^{p-1} = \sup_{R}~\La w\Ra\ci{R}\La w'\Ra\ci{R}^{p-1},
\end{equation*}
where supremum is taken over \emph{all} rectangles in $\R\times\R$ (with sides parallel to the coordinate axis). We define a dyadic version of this by
\begin{equation*}
[w]\ci{A_p,\bfD}:=\sup_{R\in\bfD}~\La w\Ra\ci{R}\La w^{-1/(p-1)}\Ra\ci{R}^{p-1}.
\end{equation*}
We say that $w$ is a biparameter $A_p$ weight, respectively a dyadic biparameter $A_p$ weight, if $[w]\ci{A_p}<\infty$, respectively $[w]\ci{A_p,\bfD}<\infty$. Note that similarly to the one-parameter case we have $[w]\ci{A_p}\geq1$ and $[w']\ci{A_{p'}} = [w]\ci{A_p}^{p'-1},$ as well as the analogous facts for dyadic $A_p$ and dyadic $A_{p'}$ weights.

Consider the \emph{strong} maximal function $M\ti{S}$ given by
\begin{equation*}
M\ti{S}f:=\sup_{R}~\La|f|\Ra\ci{R}\1\ci{R},
\end{equation*}
where as previously the supremum is taken over \emph{all} rectangles in $\R\times\R$ (with sides parallel to the coordinate axis). Consider also the Hardy--Littlewood maximal functions acting in each variable separately,
\begin{equation*}
M^1f(t,s):=M(f(\fdot,s))(t),\qquad M^2f(t,s):=M(f(t,\fdot))(s),\qquad f\in L^1\ti{loc}(\R^2).
\end{equation*}
Using the Lebesgue Differentiation Theorem it is easy to see that
\begin{equation*}
[w]\ci{A_p}\geq\max(\esssup_{x_1\in\R}[w(x_1,\fdot)]\ci{A_p},\esssup_{x_2\in\R}[w(\fdot,x_2)]\ci{A_p}).
\end{equation*}
It is also easy to see that $M\ti{S}f\leq M^1(M^2f)$, which coupled with \eqref{Ap-maximal-function-onepar} implies immediately
\begin{equation*}
\Vert M\ti{S}\Vert\ci{L^{p}(w)\rightarrow L^{p}(w)}\lesssim_{p}\esssup_{x_1\in\R}[w(x_1,\fdot)]\ci{A_p}^{1/(p-1)}\cdot \esssup_{x_2\in\R}[w(\fdot,x_2)]\ci{A_p}^{1/(p-1)}.
\end{equation*}
On the other hand, it is also immediate to see that
\begin{equation*}
\Vert M\ti{S}\Vert\ci{L^{p}(w)\rightarrow L^{p}(w)}\geq[w]\ci{A_{p}}^{1/p}.
\end{equation*}
Therefore, $w$ is a biparameter $A_p$ weight if and only if $w$ is an $A_p$ weight in each variable separately and uniformly. Dyadic versions of these facts are similarly true for the \emph{dyadic strong} maximal function $M\ci{\bfD}$ given by
\begin{equation*}
M\ci{\bfD}f:=\sup_{R\in\bfD}~\La|f|\Ra\ci{R}\1\ci{R}.
\end{equation*}

Here it is worth mentioning the analogous property for $A_p$ weights defined on $\R^{\vec{d}} = \R^{d_1} \times \dots \times \R^{d_t}.$
We first introduce some convenient notation for the general setting.
Given $x \in \R^{\vec{d}}$ and $k \in \lbrace 1, 2, \ldots, t\rbrace,$ let us denote $x_{\overline{k}} := (x_1,\ldots,x_{k-1},\cdot,x_{k+1},\ldots,x_t),$ so that a function $f(x_{\overline{k}})$ depends only on the variable $x_k.$
Similarly, we denote $\R^{\vec{d}_{\overline{k}}} := \R^{d_1} \times \dots \times \R^{d_{k-1}} \times \R^{d_{k+1}} \times \dots \times \R^{d_{t}}.$
In general, we will also extend this notation to any number of parameters, so that if $k_1,\ldots,k_s \in \lbrace 1, 2, \ldots, t\rbrace,$ then we will denote by $x_{\overline{k_1,\ldots,k_s}} := (x_1,\ldots,x_{k_1-1},\cdot,x_{k_1+1},\ldots,x_{k_s-1},\cdot,x_{k_s+1},\ldots,x_t),$ and similarly for $\R^{\vec{d}_{\overline{k_1,\ldots,k_s}}}.$
Then, the same reasoning as before shows that a weight $w$ is a multiparameter $A_p$ weight on $\R^{\vec{d}}$ if and only if, for any subsequence $k = (k_1, \ldots, k_s)$ of $(1, \ldots, t),$ the weight $w(x\ci{\overline{k}})$ is an $A_p$ weight on $\R^{\vec{d}\ci{\overline{k}}}$ with $A_p$ characteristic uniformly bounded on $x \in \R^{\vec{d}}.$
In particular, it is easy to see that
\begin{equation*}
  [w]\ci{A_p} \geq
  \max_k \bigg( \esssup_{x_{\overline{k}} \in \R^{\vec{d}_{\overline{k}}}} [w(x_{\overline{k}})]\ci{A_p} \bigg),
\end{equation*}
where the maximum ranges over all subsequences $k$ of $(1,\ldots,t).$

\subsubsection{Averages of \texorpdfstring{$A_p$}{Ap} weights} We recall a few standard facts about averages of $A_p$ weights. Let $\mu,\lambda$ be biparameter $A_p$ weights on $\R\times\R$. Using several times Jensen's inequality, H\ddoto lder's inequality and the $A_p$ condition for the weights $\mu$ and $\lambda$ it is easy to see that for all rectangles $R$ one has the estimates
\begin{equation}
\label{averages Ap}
\La \mu^{1/p}\Ra\ci{R}\sim\ci{[\mu]\ci{A_p}}\La\mu\Ra\ci{R}^{1/p}\sim\ci{[\mu]\ci{A_p}}\La \mu^{-1/(p-1)}\Ra\ci{R}^{-(p-1)/p}\sim\ci{[\mu]\ci{A_p}}\La \mu^{-1/p}\Ra\ci{R}^{-1},
\end{equation}
and
\begin{equation}
\label{multiplied averages Ap}
\La \mu^{1/p}\lambda^{-1/p}\Ra\ci{R}\sim\ci{[\mu]\ci{A_p},[\lambda]\ci{A_p}}\La\mu\Ra\ci{R}^{1/p}\La\lambda\Ra\ci{R}
^{-1/p},
\end{equation}
see \cite[~p. 2]{treil et al} for a sketch of the argument (only the one-parameter setting is treated there, but it is obvious that the same argument works in the multiparameter setting without any changes at all). Moreover, using H\ddoto lder's inequality it is easy to see that
\begin{equation*}
1\leq [\mu^{1/p}\lambda^{-1/p}]\ci{A_2}\leq[\mu]\ci{A_p}^{1/p}[\lambda]\ci{A_p}^{1/p},
\end{equation*}
see \cite[Lemma 2.7]{holmes-lacey-wick} for a full proof (again, while only the one-parameter setting is treated there, the multiparameter result follows from the same arguments).

Note that dyadic versions of all the above facts are similarly true.

\subsection{Dyadic square functions and Littlewood--Paley estimates} We denote by $S\ci{\cD}$ the dyadic square function in $\R$,
\begin{equation*}
S\ci{\cD}f:=\left(\sum_{I\in\cD}|Q\ci{I}f|^2\right)^{1/2}=\left(\sum_{I\in\cD}|f\ci{I}|^2\frac{\1\ci{I}}{|I|}\right)^{1/2},\qquad f\in L^1\ti{loc}(\R).
\end{equation*}
We also denote by $S\ci{\bfD}$ the dyadic biparameter square function in $\R\times\R$,
\begin{equation*}
S\ci{\bfD}f:=\left(\sum_{R\in\bfD}|Q\ci{R}f|^2\right)^{1/2}=\left(\sum_{R\in\bfD}|f\ci{R}|^2\frac{\1\ci{R}}{|R|}\right)^{1/2},\qquad f\in L^1\ti{loc}(\R^2).
\end{equation*}
It is well-known that if $w$ is a dyadic $A_p$ weight on $\R$, $1<p<\infty$, then
\begin{equation*}
\Vert S\ci{\cD}f\Vert\ci{L^{p}(w)}\sim\ci{p,[w]\ci{A_p,\cD}}\Vert f\Vert\ci{L^{p}(w)},
\end{equation*}
for all (suitable) functions $f$ on $\R$. Iterating this and using well-known results about vector-valued extensions of linear operators (see e.g. \cite[Chapter 5]{grafakos-classical}) we deduce, as remarked in \cite{holmes-petermichl-wick}, that if $w$ is a dyadic biparameter $A_p$ weight on $\R\times\R$, then 
\begin{equation*}
\Vert S\ci{\bfD}f\Vert\ci{L^{p}(w)}\sim\ci{p,[w]\ci{A_p,\bfD}}\Vert f\Vert\ci{L^{p}(w)},
\end{equation*}
for all (suitable) functions $f$ on $\R^2$ (e.g. $f\in L^{\infty}\ti{c}(\R^2)$ suffices, and then using approximation arguments one can extend it to more general functions $f$). In particular, the set of all finite linear combinations of (bi-cancellative) Haar functions in $\R^2$ is dense in $L^{p}(w)$.

Note that for $p=2$ we simply have
\begin{equation*}
\Vert S\ci{\bfD}f\Vert\ci{L^2(w)}=\left(\sum_{R\in\bfD}|f\ci{R}|^2\La w\Ra\ci{R}\right)^{1/2}.
\end{equation*}

The above results also hold in the multiparameter setting, with the usual modifications for general product spaces.

For the general multiparameter case, we will also need to consider \emph{indexed square functions} for functions defined on $\R^{\vec{d}} = \R^{d_1} \times \dots \times \R^{d_t}.$
For $i \in \lbrace 1, \ldots, t \rbrace$ and $I_i \in \cD(\R^{d_i}),$ we denote here by $Q\ci{I_i}^i$ the operator $Q\ci{I_i}$ acting on the $i$-th variable, that is $Q\ci{I_i}^i f(x) = Q\ci{I_i}(f(x_{\overline{i}}))(x_i)$ for $f \in L\ti{loc}^1(\R^{\vec{d}}).$
Similarly, for a subsequence $k = (k_1, \ldots, k_s)$ of $(1, \ldots, t),$ let $R = I\ci{k_1} \times \dots \times I\ci{k_s} \in \bfD(\R^{\vec{d}\ci{\overline{k}}})$ and denote $Q\ci{R}^k = Q\ci{I_1}^{k_1} \dots Q\ci{I_s}^{k_s}.$
Fix a subsequence $k = (k_1, \ldots, k_s)$ of $(1, \ldots, t).$
Then, we define the $k$\emph{-indexed square function} $S\ci{\bfD}^k f$ of $f$ by
\begin{equation*}
  S\ci{\bfD}^k f :=
  \left(\sum_{R\in\bfD} |Q\ci{R}^k f|^2 \right)^{1/2} =
  \left(\sum_{R\in\bfD} |f\ci{R}^k|^2 \frac{\1\ci{R}^k}{|R|}\right)^{1/2}, \qquad f\in L^1\ti{loc}(\R^{\vec{d}})
\end{equation*}
(here we make an abuse of the notation $\bfD,$ relying on the context for understanding to which space it refers).
Then, using an $A\ci{\infty}$-extrapolation result due to D.~Cruz-Uribe, J.~M.~Martell and C.~Pérez \cite[Theorem~2.1]{cruz-uribe-martell-perez} one can see that for any subsequence $k$ of $(1, \ldots, t),$ any $1 < p < \infty$ and any dyadic $t$-parameter $A_p$ weight $w$ it holds that
\begin{equation*}
  \Vert S\ci{\bfD}^k f \Vert\ci{L^p(w)} \sim\ci{\vec{d},p,[w]\ci{A_p,\bfD}}
  \Vert f \Vert\ci{L^p(w)}
\end{equation*}
(see also \cite[Lemma~2.2]{airta}).

\subsubsection{Dyadic square function over collections of dyadic rectangles}

Let $\cU$ be any collection of dyadic rectangles in $\R\times\R$. We denote
\begin{equation*}
P\ci{\cU}f:=\sum_{R\in\cU}f\ci{R}h\ci{R},\qquad S\ci{\cU}f:=\left(\sum_{R\in\cU}|f\ci{R}|^2\frac{\1\ci{R}}{|R|}\right)^{1/2}.
\end{equation*}
By the above we have that if $w$ is a biparameter $A_p$ weight on $\R\times\R$ then there holds
\begin{align*}
\Vert P\ci{\cU}f\Vert\ci{L^p(w)}\sim\ci{p,[w]\ci{A_p,\bfD}}\Vert S\ci{\cU}f\Vert\ci{L^{p}(w)}
\leq\Vert S\ci{\bfD}f\Vert\ci{L^{p}(w)}\sim\ci{p,[w]\ci{A_p,\bfD}}\Vert f\Vert\ci{L^p(w)}.
\end{align*}
In particular
\begin{equation*}
\Vert Q\ci{R}f\Vert\ci{L^p(w)}\lesssim\ci{[w]\ci{A_p,\bfD}}\Vert f\Vert\ci{L^p(w)},\qquad\forall R\in\bfD.
\end{equation*}
If $\Omega$ is any subset of $\R^2$, we denote
\begin{equation*}
P\ci{\Omega}f:=P\ci{\bfD(\Omega)}f.
\end{equation*}
Again, all these statements hold in the multiparameter setting as well.

\subsubsection{Incorporating the weight in the square function}

It is sometimes convenient to define square functions in an alternative way that directly incorporates the weight in the operator. Namely, given $1<p<\infty$ and any dyadic biparameter $A_p$ weight $w$ on $\R^2$, define
\begin{equation*}
S\ci{w}f:=\left(\sum_{R\in\bfD}|f\ci{R}|^2\La w\Ra\ci{R}^{2/p}\frac{\1\ci{R}}{|R|}\right)^{1/2},\qquad f\in L^1\ti{loc}(\R^2).
\end{equation*}
This type of square functions appears naturally in the theory of matrix-valued weights (see e.g. \cite{isralowitz} for estimates in the matrix-weighted one-parameter setting), and is sometimes referred to as the Triebel--Lizorkin square function associated to $w$, because $L^{p}$ bounds for it allow one to identify $L^{p}(w)$ as a certain Trielel--Lizorkin space (see e.g. \cite{frazier-jawerth-weiss} for the scalar one-parameter case). Here we prove an estimate in the scalar biparameter setting. In fact, the proofs of Lemma \ref{l: tl-upper bound} and Corollary \ref{c: tl-lower bound} below readily extend to the matrix-valued setting. This will be part of forthcoming work of the authors.

\begin{lm}
\label{l: tl-upper bound}
Let $f\in L^1\ti{loc}(\R^2)$. Then
\begin{equation*}
\Vert S_{w}f\Vert\ci{L^{p}}\lesssim_{p,[w]\ci{A_p,\bfD}}\Vert Sf\Vert\ci{L^{p}(w)}.
\end{equation*}
In particular
\begin{equation*}
\Vert S_{w}f\Vert\ci{L^p}\lesssim_{p,[w]\ci{A_p,\bfD}}\Vert f\Vert\ci{L^{p}(w)},\qquad\forall f\in L^{\infty}\ti{c}(\R^2).
\end{equation*}
\end{lm}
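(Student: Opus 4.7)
The strategy is to linearise the Triebel--Lizorkin-type square function $S_w$ via an auxiliary Haar series, dualise the resulting $L^p$ norm, and then invoke the averages estimate \eqref{averages Ap} to reduce everything to $\Vert S\ci{\bfD} f\Vert\ci{L^p(w)}$ through Cauchy--Schwarz, H\ddoto lder, and the unweighted biparameter Littlewood--Paley theorem. The crucial linearisation is to note that the function
\begin{equation*}
F\;:=\;\sum_{R\in\bfD}\langle w\rangle_R^{1/p}\,f_R\,h_R
\end{equation*}
has dyadic Haar coefficients $\langle F,h_R\rangle=\langle w\rangle_R^{1/p}f_R$, so that $S\ci{\bfD} F = S_w f$ identically. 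Since Lebesgue measure satisfies the dyadic biparameter $A_p$ condition trivially, the unweighted Littlewood--Paley theorem recalled in the excerpt yields $\Vert S_w f\Vert\ci{L^p}\sim\Vert F\Vert\ci{L^p}$ with constants depending only on $p$.

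Next, reducing first to $f$ with finite Haar support by monotone convergence applied to the final estimate, I would apply $L^p$--$L^{p'}$ duality together with Parseval to write
\begin{equation*}
\Vert F\Vert\ci{L^p}\;=\;\sup_{\Vert g\Vert\ci{L^{p'}}\le 1}\Bigl|\sum_{R\in\bfD}\langle w\rangle_R^{1/p}\,f_R\,\overline{g_R}\Bigr|.
\end{equation*}
The averages estimate \eqref{averages Ap} gives $\langle w\rangle_R^{1/p}\sim\langle w^{1/p}\rangle_R=\int\frac{\1_R(y)}{|R|}\,w^{1/p}(y)\,dy$ with constant depending only on $[w]\ci{A_p,\bfD}$. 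Substituting, interchanging the sum over $R$ with the integral over $y$ by Tonelli, and bounding the resulting inner sum by a pointwise Cauchy--Schwarz in the index set $\bfD$, I obtain
\begin{equation*}
\Bigl|\sum_R\langle w\rangle_R^{1/p}f_R\overline{g_R}\Bigr|
\;\lesssim\;\int w^{1/p}(y)\sum_{R\in\bfD}|f_R|\,|g_R|\,\frac{\1_R(y)}{|R|}\,dy
\;\le\;\int w^{1/p}\,S\ci{\bfD} f\cdot S\ci{\bfD} g\,dy.
\end{equation*}
An application of H\ddoto lder's inequality with exponents $p,p'$ and the unweighted Littlewood--Paley theorem for the exponent $p'$ then produce
\begin{equation*}
\int w^{1/p}\,S\ci{\bfD} f\,S\ci{\bfD} g\,dy
\;\le\;\Vert S\ci{\bfD}f\Vert\ci{L^p(w)}\,\Vert S\ci{\bfD}g\Vert\ci{L^{p'}}
\;\lesssim\;\Vert S\ci{\bfD}f\Vert\ci{L^p(w)}\,\Vert g\Vert\ci{L^{p'}},
\end{equation*}
which, upon taking the supremum over $\Vert g\Vert\ci{L^{p'}}\le 1$, closes the first stated inequality. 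The ``in particular'' statement is then immediate from the \emph{weighted} biparameter Littlewood--Paley theorem recalled in the excerpt.

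The main conceptual obstacle is finding the correct linearisation: the naive pointwise domination $\langle w\rangle_R^{1/p}\le(M\ci{\bfD}w(x))^{1/p}$ for $x\in R$ reduces the problem to $\int M\ci{\bfD}w\cdot(S\ci{\bfD}f)^p\,dx\lesssim\int w\cdot(S\ci{\bfD}f)^p\,dx$, which in turn forces $w\in A_1$ rather than merely $w\in A_p$. Passing to the auxiliary function $F$ replaces this pointwise bound with the duality argument sketched above, so that the only non-trivial use of the $A_p$ hypothesis enters through the averages equivalence \eqref{averages Ap}; every remaining step is routine and extends verbatim to the general multiparameter setting.
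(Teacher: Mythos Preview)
Your proof is correct and follows essentially the same approach as the paper's own proof: both linearise $S_w f$ via the auxiliary Haar series $F$, invoke the averages equivalence \eqref{averages Ap} to replace $\langle w\rangle_R^{1/p}$ by $\langle w^{1/p}\rangle_R$, and then close via duality, pointwise Cauchy--Schwarz over $\bfD$, H\ddoto lder, and the unweighted Littlewood--Paley estimate for the dual function $g$. The only cosmetic difference is the order of the first two steps (you apply Littlewood--Paley before the averages estimate, the paper does the reverse), which is immaterial.
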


\begin{proof}
First of all, we note that by the Monotone Convergence Theorem we can assume without loss of generality that $f$ has only finitely-many non-zero Haar coefficients. Now, we notice that by \eqref{averages Ap} we have
\begin{align*}
\Vert S_{w}f\Vert\ci{L^p}^{p}&=\int_{\R^2}\bigg(\sum_{R\in\bfD}|f\ci{R}|^2\La w\Ra\ci{R}^{2/p}\frac{\1\ci{R}(x)}{|R|}\bigg)^{p/2}\mathd x\\
&\lesssim_{p,[w]\ci{A_p,\bfD}}\int_{\R^2}\bigg(\sum_{R\in\bfD}| f\ci{R}|^2(\La w^{1/p}\Ra\ci{R})^2\frac{\1\ci{R}(x)}{|R|}\bigg)^{p/2}\mathd x.
\end{align*}
Thus, by standard (unweighted) dyadic Littlewood--Paley theory we only have to prove that
\begin{align*}
\Vert F\Vert\ci{L^{p}}:=\bigg\Vert \sum_{R\in\bfD}|f\ci{R}|\La w^{1/p}\Ra\ci{R}h\ci{R}\bigg\Vert\ci{L^p}\lesssim_{p} \Vert Sf\Vert\ci{L^{p}(w)}.
\end{align*}

We use duality for that. Let $g\in L^{p'}$. Then, we have
\begin{align*}
|\La F,g\Ra|&\leq\sum_{R\in\bfD} |f\ci{R}|\La w^{1/p}\Ra\ci{R}\cdot|g\ci{R}|=\int_{\R^2}\sum_{R\in\bfD}|f\ci{R}|w(x)^{1/p}\cdot h\ci{R}(x)\cdot|g\ci{R}|\cdot h\ci{R}(x)\mathd x\\
&\leq\int_{\R^2}(Sf)(x)w(x)^{1/p}(Sg)(x)\mathd x\leq \Vert Sf\Vert\ci{L^{p}(w)}\Vert Sg\Vert\ci{L^{p'}}
\sim_{p}\Vert Sf\Vert\ci{L^{p}(w)}\Vert g\Vert\ci{L^{p'}},
\end{align*}
concluding the proof.
\end{proof}

Of course, for $p=2$ we have just $\Vert S_{w}f\Vert\ci{L^2}=\Vert Sf\Vert\ci{L^2(w)}$.

Using Lemma \ref{l: tl-upper bound}, we can immediately deduce the corresponding lower bound.

\begin{cor}
\label{c: tl-lower bound}
There holds
\begin{equation*}
\Vert f\Vert\ci{L^p(w)}\lesssim_{p,[w]\ci{A_p,\bfD}}\Vert S_{w}f\Vert\ci{L^p},\qquad\forall f\in L^{\infty}\ti{c}(\R^{2}).
\end{equation*}
\end{cor}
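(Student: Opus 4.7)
The plan is a duality argument complementary to Lemma \ref{l: tl-upper bound}. Let $w' := w^{1-p'}$, a dyadic biparameter $A_{p'}$ weight with $[w']\ci{A_{p'},\bfD} = [w]\ci{A_p,\bfD}^{p'-1}$. The standard duality $L^p(w)^* = L^{p'}(w')$ under the pairing $\La\cdot,\cdot\Ra$ gives
\begin{equation*}
\Vert f\Vert\ci{L^p(w)} = \sup\bigl\{|\La f,g\Ra| : g \in L^{p'}(w'),~\Vert g\Vert\ci{L^{p'}(w')} \leq 1\bigr\}.
\end{equation*}
By density of finite bi-cancellative Haar sums in both $L^p(w)$ and $L^{p'}(w')$ (via weighted biparameter Littlewood--Paley), it suffices to show $|\La f,g\Ra| \lesssim\ci{p,[w]\ci{A_p,\bfD}} \Vert S_w f\Vert\ci{L^p}\Vert g\Vert\ci{L^{p'}(w')}$ when $f$ and $g$ are finite bi-cancellative Haar sums, in which case Plancherel gives $\La f,g\Ra = \sum_{R\in\bfD} f\ci{R}\overline{g\ci{R}}$.

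Since $\int_{\R^2}\frac{\1\ci{R}(x)}{|R|}\,dx = 1$, one may rewrite $\La f,g\Ra = \int_{\R^2}\sum_{R\in\bfD} f\ci{R}\overline{g\ci{R}}\frac{\1\ci{R}(x)}{|R|}\,dx$. Pointwise Cauchy--Schwarz applied to the splitting $f\ci{R}\overline{g\ci{R}} = \bigl(f\ci{R}\La w\Ra\ci{R}^{1/p}\bigr)\cdot\bigl(\overline{g\ci{R}}\La w\Ra\ci{R}^{-1/p}\bigr)$ yields
\begin{equation*}
\biggl|\sum_{R\in\bfD} f\ci{R}\overline{g\ci{R}}\frac{\1\ci{R}(x)}{|R|}\biggr| \leq S_w f(x)\cdot T(g)(x), \quad T(g) := \biggl(\sum_{R\in\bfD}|g\ci{R}|^2\La w\Ra\ci{R}^{-2/p}\frac{\1\ci{R}}{|R|}\biggr)^{1/2}.
\end{equation*}
Jensen's inequality $\La w\Ra\ci{R}\La w'\Ra\ci{R}^{p-1} \geq 1$ --- the easy half of \eqref{averages Ap} --- gives $\La w\Ra\ci{R}^{-2/p} \leq \La w'\Ra\ci{R}^{2/p'}$ for every $R\in\bfD$, and hence the pointwise bound $T(g)(x) \leq S\ci{w'}g(x)$, where $S\ci{w'}$ denotes the Triebel--Lizorkin square function from Lemma \ref{l: tl-upper bound} associated to the $A_{p'}$ weight $w'$ with exponent $p'$.

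Combining Hölder's inequality in $L^p\times L^{p'}$ with Lemma \ref{l: tl-upper bound} applied to $(w',p')$ in place of $(w,p)$ then produces
\begin{equation*}
|\La f,g\Ra| \leq \Vert S_w f\Vert\ci{L^p}\Vert S\ci{w'}g\Vert\ci{L^{p'}} \lesssim\ci{p,[w]\ci{A_p,\bfD}} \Vert S_w f\Vert\ci{L^p}\Vert g\Vert\ci{L^{p'}(w')},
\end{equation*}
with the final constant depending only on $p$ and $[w]\ci{A_p,\bfD}$ thanks to $[w']\ci{A_{p'},\bfD} = [w]\ci{A_p,\bfD}^{p'-1}$. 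Taking the supremum over admissible $g$ finishes the proof. No serious obstacle is expected here: the argument is essentially a symmetric dual of Lemma \ref{l: tl-upper bound}, and the items to verify (the standard weighted duality, density of Haar combinations in both weighted Lebesgue spaces, and the elementary half of \eqref{averages Ap}) are all routine.
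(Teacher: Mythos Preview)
Your proof is correct and follows essentially the same duality argument as the paper: pair with $g$, split each term as $(f\ci{R}\La w\Ra\ci{R}^{1/p})(\overline{g\ci{R}}\La w\Ra\ci{R}^{-1/p})$, apply Cauchy--Schwarz pointwise, then H\"older and Lemma~\ref{l: tl-upper bound} for $(w',p')$. The only (cosmetic) difference is that you invoke just the elementary Jensen half of \eqref{averages Ap} to get $\La w\Ra\ci{R}^{-1/p}\le\La w'\Ra\ci{R}^{1/p'}$, whereas the paper cites the full two-sided estimate there; your version is in fact slightly sharper in its dependence on $[w]\ci{A_p,\bfD}$, but the argument is otherwise identical.
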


\begin{proof}
We use duality. Recall that $w':=w^{-1/(p-1)}$ is a dyadic biparameter $A_{p'}$ weight with $[w]\ci{A_{p'}}^{1/p'}=[w]\ci{A_p}^{1/p}$. Let $f,g\in L^{\infty}\ti{c}(\R^{2})$. Then, using \eqref{averages Ap} and applying Lemma \ref{l: tl-upper bound} (for the weight $w'$) we get
\begin{align*}
|\La f,g\Ra|&\leq\sum_{R\in\bfD}| f\ci{R}|\cdot|g\ci{R}|=
\sum_{R\in\bfD}\int_{\R^2}|f\ci{R}|\La w\Ra\ci{R}^{1/p}h\ci{R}(x)\cdot|g\ci{R}|\La w\Ra\ci{R}^{-1/p}h\ci{R}(x)\mathd x\\
&\lesssim_{p,[w]\ci{A_p,\bfD}}
\sum_{R\in\bfD}\int_{\R^2}|f\ci{R}|\La w\Ra\ci{R}^{1/p}h\ci{R}(x)\cdot|g\ci{R}|\La w'\Ra\ci{R}^{1/p'}h\ci{R}(x)\mathd x\\
&\leq\int_{\R^{2}}(S_{w}f(x))(S_{w'}g(x))\mathd x\leq\Vert S_wf\Vert\ci{L^{p}}\Vert S_{w'}g\Vert\ci{L^{p'}}\\
&\lesssim_{p,[w]\ci{A_p,\bfD}}\Vert S_{w}f\Vert\ci{L^{p}}\Vert g\Vert\ci{L^{p'}(w')},
\end{align*}
so $\Vert f\Vert\ci{L^{p}(w)}\lesssim_{p,[w]\ci{A_{p},\bfD}}\Vert S_{w}f\Vert\ci{L^{p}}$, concluding the proof.
\end{proof}

It is clear that the above results remain true in the general multiparameter setting.

\section{Equivalences for dyadic Bloom product BMO}
\label{sec:WeightedEquivalences}

We prove in this section that the dyadic Bloom product BMO admits several equivalent descriptions.
We will focus on dyadic biparameter $A_p$ weights, with $1 < p <\infty,$ on $\R\times\R.$
It should be noted that the results presented here also hold in the general multiparameter setting of functions and weights defined on $\R^{d_1}\times\dots\times\R^{d_t}$, with identical proofs.
However, for simplicity we will restrict ourselves to the case of $\R\times\R.$

In the sequel we fix $1<p<\infty$ and dyadic biparameter $A_p$ weights $\mu,\lambda$ on $\R^2$, and we set $\nu:=\mu^{1/p}\lambda^{-1/p}$. Note that we will be systematically suppressing from the notation dependence of implied constants on the value of $p$ and the Muckenhoupt characteristics $[\mu]\ci{A_p,\bfD}$ and $[\lambda]\ci{A_p,\bfD}$. Recall that $\nu$ is a dyadic biparameter $A_2$ weight with $1\leq[\nu]\ci{A_2}\leq[\mu]\ci{A_p}^{1/p}[\lambda]\ci{A_p}^{1/p}$.

Given any sequence $a=\lbrace a\ci{R}\rbrace\ci{R\in\bfD}$ of complex numbers, we define the \emph{dyadic two-weight Bloom product BMO $p$-norm} $\Vert a\Vert\ci{\text{BMO}\ci{\text{prod},\bfD}(\mu,\lambda,p)}$ by
\begin{equation*}
\Vert a\Vert\ci{\text{BMO}\ci{\text{prod},\bfD}(\mu,\lambda,p)}:=\sup_{\cU}\frac{1}{(\mu(\text{sh}(\cU)))^{1/p}}\left\Vert \left(\sum_{R\in\cU}|a\ci{R}|^2\frac{\1\ci{R}}{|R|}\right)^{1/2}\right\Vert\ci{L^p(\lambda)},
\end{equation*}
where the supremum ranges over all non-empty collections $\cU$ of dyadic rectangles in $\R\times\R$, and
\begin{equation*}
\text{sh}(\cU):=\bigcup_{R\in\cU}R.
\end{equation*}
By the Monotone Convergence Theorem, it is clear that one can restrict the supremum in the definition of $\Vert a\Vert\ci{\text{BMO}\ci{\text{prod},\bfD}(\mu,\lambda,p)}$ to just non-empty finite sucollections $\cU$ of $\bfD$. Moreover, it is easy to see that
\begin{equation*}
\Vert a\Vert\ci{\text{BMO}\ci{\text{prod},\bfD}(\mu,\lambda,p)}=
\sup_{\Omega}\frac{1}{(\mu(\Omega))^{1/p}}\left\Vert\left( \sum_{R\in\bfD(\Omega)}|a\ci{R}|^2\frac{\1\ci{R}}{|R|}\right)^{1/2}\right\Vert\ci{L^p(\lambda)},
\end{equation*}
where the supremum is taken over all non-empty bounded open sets $\Omega$ in $\R^2$, or even over all measurable subsets $\Omega$ of $\R^2$ of non-zero finite measure.

For any dyadic biparameter $A_2$ weight $\nu$ on $\R^2$, define the \emph{dyadic one-weight Bloom product BMO norm} $\Vert a\Vert\ci{\text{BMO}\ci{\text{prod},\bfD}(\nu)}$ by
\begin{equation*}
\Vert a\Vert\ci{\text{BMO}\ci{\text{prod},\bfD}(\nu)}:=\Vert a\Vert\ci{\text{BMO}\ci{\text{prod},\bfD}(\nu,\nu^{-1},2)}.
\end{equation*}
We also define the \emph{unweighted product BMO norm}
\begin{equation*}
\Vert a\Vert\ci{\text{BMO}\ci{\text{prod},\bfD}}:=\Vert a\Vert\ci{\text{BMO}\ci{\text{prod},\bfD}(1)}.
\end{equation*}

If $b\in L^1\ti{loc}(\R)$, then we define
\begin{equation*}
\Vert b\Vert\ci{\text{BMO}\ci{\text{prod},\bfD}(\mu,\lambda,p)}:=\Vert\lbrace b\ci{R}\rbrace\ci{R\in\bfD}\Vert\ci{\text{BMO}\ci{\text{prod},\bfD}(\mu,\lambda,p)}.
\end{equation*}

The main goal of this section is to prove Theorem \ref{t: equivBMO}, which we state again for the reader's convenience, in a slightly more general (but in view of the Monotone Convergence Theorem, equivalent) form.

\theoremstyle{plain}
\newtheorem*{thm:equivBMO*}{Theorem \ref{t: equivBMO}}
\begin{thm:equivBMO*}
Let $1 < p <\infty$.
Consider dyadic biparameter $A_p$ weights $\mu,$ $\lambda$ and define $\nu:=\mu^{1/p}\lambda^{-1/p}$. Then
\begin{equation*}
\Vert a\Vert\ci{\emph{BMO}\ci{\emph{prod},\bfD}(\nu)}\sim\Vert a\Vert\ci{\emph{BMO}\ci{\emph{prod},\bfD}(\mu,\lambda,p)},
\end{equation*}
for any sequence of complex numbers $a=\lbrace a\ci{R}\rbrace\ci{R\in\bfD}$, where the implied constants depend only on $p,$ $[\mu]\ci{A_p,\bfD}$ and $[\lambda]\ci{A_p,\bfD}$.
\end{thm:equivBMO*}

The one-parameter analog of Theorem \ref{t: equivBMO} was shown by I.~Holmes, M.~T.~Lacey and B.~D.~Wick \cite{holmes-lacey-wick}. Moreover, the little bmo analog of Theorem \ref{t: equivBMO} was established by I.~Holmes, S.~Petermichl and B.~D.~Wick \cite{holmes-petermichl-wick} by iterating the one-parameter result of \cite{holmes-lacey-wick}. Also, in the special case $\mu=\lambda$, Theorem \ref{t: equivBMO} was proved by E.~Airta, K.~Li, H.~Martikainen and E.~Vuorinen \cite{martikainen et al} (in fact under the weaker assumption that $\mu=\lambda$ is just a biparameter $A_{\infty}$ weight).

The proof of Theorem \ref{t: equivBMO} will be done in several steps. Note that by the Monotone Convergence Theorem, we can without loss of generality assume that the sequence $a=\lbrace a\ci{R}\rbrace\ci{R\in\bfD}$ is finitely supported.

Now, consider the ``purely non-cancellative'' biparameter paraproduct with symbol $a$,
\begin{equation*}
\Pi_{a}^{(11)}f:=\sum_{R\in\bfD}a\ci{R}\La f\Ra\ci{R}h\ci{R}.
\end{equation*}
Note that the superscript $(11)$ indicates that in the expression of the paraproduct $f$ is integrated only against Haar functions of the form $h^{(11)}\ci{R}$.
This operator is defined and used by Holmes--Lacey--Wick \cite{holmes-lacey-wick} in the Bloom one-parameter setting, as well as by Blasco--Pott \cite{blasco-pott} in the unweighted biparameter setting (for $p=2$) and Holmes--Petermichl--Wick \cite{holmes-petermichl-wick}  in the Bloom biparameter setting.

First note that just by testing $\Pi_{a}^{(1,1)}$ on $\1\ci{\sh(\cU)}$ and then using the weighted Littlewood--Paley estimates, for any $\cU\subseteq\bfD$, we trivially deduce
\begin{equation*}
\Vert a\Vert\ci{\text{BMO}\ci{\text{prod},\bfD}(\mu,\lambda,p)}\lesssim\Vert \Pi^{(1,1)}_{a}\Vert\ci{L^{p}(\mu)\rightarrow L^{p}(\lambda)}.
\end{equation*}
We first show that the previous two quantities are actually equivalent in the regime $1 < p \leq 2$. To this end, we follow a scheme similar to that of one of the standard proofs of the dyadic Carleson's embedding theorem (see for instance \cite{treildyadic} for a very general one-parameter version), but with the one-parameter maximal function replaced by the dyadic strong maximal function.

\begin{prop}
\label{prop:ParaproductTwoWeightEquiv}
Assume $1 < p \leq 2.$
Then, there holds
\begin{equation*}
\Vert a\Vert\ci{\emph{BMO}\ci{\emph{prod},\bfD}(\mu,\lambda,p)}\sim\Vert \Pi^{(1,1)}_{a}\Vert\ci{L^{p}(\mu)\rightarrow L^{p}(\lambda)},
\end{equation*}
where the implied constants depend only on $p,[\mu]\ci{A_p,\bfD}$ and $[\lambda]\ci{A_p,\bfD}$.
\end{prop}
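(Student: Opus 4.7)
The direction $\Vert a\Vert\ci{\text{BMO}\ci{\text{prod},\bfD}(\mu,\lambda,p)}\lesssim\Vert\Pi_{a}^{(1,1)}\Vert\ci{L^{p}(\mu)\rightarrow L^{p}(\lambda)}$ is the one the authors have already observed: testing $\Pi_{a}^{(1,1)}$ against $\1\ci{\sh(\cU)}$ and using the $L^{p}(\lambda)$ Littlewood--Paley estimate produces $\Vert S\ci{\cU}(a)\Vert\ci{L^{p}(\lambda)}$ on the left while $\Vert\1\ci{\sh(\cU)}\Vert\ci{L^{p}(\mu)}=\mu(\sh(\cU))^{1/p}$ on the right, and the supremum over $\cU$ yields the claim. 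The content of the proposition is therefore the reverse estimate, and the plan is to run a stopping-time argument in the spirit of the dyadic Carleson embedding theorem, with the role of the one-parameter maximal function played by the dyadic strong maximal function $M\ci{\bfD}$.

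Concretely, by weighted biparameter Littlewood--Paley on $L^{p}(\lambda)$ the reverse inequality reduces to a Carleson-type bound
\begin{equation*}
\Big\Vert\Big(\sum_{R\in\bfD}|a\ci{R}|^{2}|\La f\Ra\ci{R}|^{2}\frac{\1\ci{R}}{|R|}\Big)^{1/2}\Big\Vert\ci{L^{p}(\lambda)}\lesssim\Vert a\Vert\ci{\text{BMO}\ci{\text{prod},\bfD}(\mu,\lambda,p)}\Vert f\Vert\ci{L^{p}(\mu)},
\end{equation*}
which by density it suffices to prove for $f\in L^{\infty}\ti{c}$. For each $k\in\Z$ I set $\Omega_{k}:=\lbrace M\ci{\bfD}f>2^{k}\rbrace$ and
\begin{equation*}
\cE_{k}:=\lbrace R\in\bfD:\ R\subseteq\Omega_{k},\ R\not\subseteq\Omega_{k+1}\rbrace,
\end{equation*}
so that the families $\cE_{k}$ partition all dyadic rectangles with $\La f\Ra\ci{R}\neq0$ and, for every $R\in\cE_{k}$, some $x\in R$ lies outside $\Omega_{k+1}$, which forces $|\La f\Ra\ci{R}|\leq M\ci{\bfD}f(x)\leq 2^{k+1}$. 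Invoking the subadditivity $(\sum_{k}x_{k})^{p/2}\leq\sum_{k}x_{k}^{p/2}$ (valid precisely because $p\leq 2$), then replacing $|\La f\Ra\ci{R}|$ by $2^{k+1}$ inside each $\cE_{k}$, and finally using the definition of $\Vert a\Vert\ci{\text{BMO}\ci{\text{prod},\bfD}(\mu,\lambda,p)}$ applied to $\Omega_{k}$ (recall that $\cE_{k}\subseteq\bfD(\Omega_{k})$), bounds the left-hand side to the $p$-th power by
\begin{equation*}
2^{p}\Vert a\Vert\ci{\text{BMO}\ci{\text{prod},\bfD}(\mu,\lambda,p)}^{p}\sum_{k\in\Z}2^{kp}\mu(\Omega_{k}).
\end{equation*}
The remaining sum is controlled by the standard layer-cake estimate $\sum_{k}2^{kp}\mu(\Omega_{k})\lesssim_{p}\Vert M\ci{\bfD}f\Vert\ci{L^{p}(\mu)}^{p}$, and the hypothesis that $\mu$ is dyadic biparameter $A_{p}$ then yields $\Vert M\ci{\bfD}f\Vert\ci{L^{p}(\mu)}\lesssim\Vert f\Vert\ci{L^{p}(\mu)}$, closing the estimate.

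The only step that genuinely needs $p\leq 2$ is the subadditivity used to separate the scales $\cE_{k}$; this is exactly what makes the range $p>2$ inaccessible by the present argument, and presumably the reason the authors must call upon the Triebel--Lizorkin square function $S\ci{w}$ of Lemma~\ref{l: tl-upper bound} together with the one-weight equivalence of \cite{martikainen et al} in the other regime. Beyond that, the only real biparameter wrinkle is that the stopping sets $\Omega_{k}$ need not be unions of dyadic rectangles, but this is harmless since the only fact used is $\cE_{k}\subseteq\bfD(\Omega_{k})$, which remains in force.
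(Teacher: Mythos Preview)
Your proof is correct and follows essentially the same strategy as the paper's: both reduce via weighted Littlewood--Paley to a Carleson-type embedding, both slice by level sets of the dyadic strong maximal function $M\ci{\bfD}$, both use $p\leq 2$ in the same crucial way to pull the $p/2$ power inside a sum, and both finish with the $A_p$ bound for $M\ci{\bfD}$. The only difference is in packaging: where you decompose discretely into $\cE_k=\bfD(\Omega_k)\setminus\bfD(\Omega_{k+1})$ and invoke the elementary subadditivity $(\sum_k x_k)^{p/2}\leq\sum_k x_k^{p/2}$, the paper fixes $x$, views $R\mapsto|a\ci R|^2\1\ci R(x)/|R|$ as a measure on $\bfD$, and appeals to the Lorentz embedding $L^{2/p,1}\subseteq L^{2/p}$ (equivalently, the continuous layer-cake form of the same inequality) before passing to the distribution function of $M\ci{\bfD}f$. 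Your version is slightly more elementary in that it avoids Lorentz spaces; the paper's is a bit more streamlined in that it never needs to verify a partition of $\bfD$. Either way, the substance is identical.
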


\begin{proof}
We only need to show the $\gtrsim$ direction, that is the inequality
\begin{equation*}
\Vert \Pi^{(1,1)}_{a}(f)\Vert\ci{L^{p}(\lambda)}\lesssim\Vert a\Vert\ci{\text{BMO}\ci{\text{prod},\bfD}(\mu,\lambda,p)}\Vert f\Vert\ci{L^{p}(\mu)}.
\end{equation*}
By the weighted Littlewood--Paley estimates, we have
\begin{align*}
\Vert \Pi_{a}^{(1,1)}(f)\Vert\ci{L^{p}(\lambda)}^{p}\sim\int_{\R\times\R}\left(\sum_{R\in\bfD}|a\ci{R}|^2|\La f\Ra\ci{R}|^2\frac{\1\ci{R}(x)}{|R|}\right)^{p/2}\lambda(x)dx.
\end{align*}
Thus, without loss of generality we may assume that $f\geq0$.

Fix $x\in\R\times\R$. Consider the measure $m$ on the countable set $\bfD$ given by
\begin{equation*}
m(R):=|a\ci{R}|^2\frac{\1\ci{R}(x)}{|R|},\qquad\forall R\in\bfD.
\end{equation*}
Consider also the function $g:\bfD\rightarrow[0,\infty)$ given by $g(R):=(\La f\Ra\ci{R})^{p}$, for all $R\in\bfD$. Then, we have
\begin{equation*}
\left(\sum_{R\in\bfD}|a\ci{R}|^2(\La f\Ra\ci{R})^2\frac{\1\ci{R}(x)}{|R|}\right)^{p/2}=
\Vert g\Vert\ci{L^{2/p}(\bfD,m)}.
\end{equation*}
We emphasize here that $2/p\geq 1$. Thus, in the scale of Lorentz spaces, we have (see e.g. \cite[Proposition 1.4.10]{grafakos-classical})
\begin{equation*}
L^{2/p,1}\subseteq L^{2/p,2/p}=L^{2/p}.
\end{equation*}
Therefore
\begin{align*}
\left(\sum_{R\in\bfD}|a\ci{R}|^2(\La f\Ra\ci{R})^2\frac{\1\ci{R}(x)}{|R|}\right)^{p/2}&=
\Vert g\Vert\ci{L^{2/p}(\bfD,m)}\lesssim_{p}\Vert g\Vert\ci{L^{2/p,1}(\bfD,m)}\\&
\sim_{p}\int_{0}^{\infty}(m(\lbrace g>t\rbrace))^{p/2}dt
=\int^{\infty}_{0}\bigg(\sum_{\substack{R\in\bfD\\(\La f\Ra\ci{R})^{p}>t}}|a\ci{R}|^2\frac{\1\ci{R}(x)}{|R|}\bigg)^{p/2}dt.
\end{align*}
Therefore
\begin{align*}
&\Vert \Pi_{a}^{(1,1)}(f)\Vert\ci{L^{p}(\lambda)}^{p}\lesssim_{p}\int_{\R\times\R}\lambda(x)\int_{0}^{\infty}\bigg(\sum_{\substack{R\in\bfD\\(\La f\Ra\ci{R})^{p}>t}}|a\ci{R}|^2\frac{\1\ci{R}(x)}{|R|}\bigg)^{p/2}dtdx\\
&\leq\int^{\infty}_{0}\int_{\R\times\R}\bigg(\sum_{\substack{R\in\bfD\\R\subseteq\lbrace(M\ci{\bfD}f)^{p}>t\rbrace}}|a\ci{R}|^2\frac{\1\ci{R}(x)}{|R|}\bigg)^{p/2}\lambda(x)dxdt\\
&\leq\Vert a\Vert\ci{\text{BMO}\ci{\text{prod},\bfD}(\mu,\lambda,p)}^{p}\int^{\infty}_{0}\mu(\lbrace (M\ci{\bfD}f)^p>t\rbrace)dt\\
&=\Vert a\Vert\ci{\text{BMO}\ci{\text{prod},\bfD}(\mu,\lambda,p)}^{p}\Vert (M\ci{\bfD}f)^{p}\Vert\ci{L^1(\mu)}=\Vert a\Vert\ci{\text{BMO}\ci{\text{prod},\bfD}(\mu,\lambda,p)}^{p}\Vert M\ci{\bfD}f\Vert\ci{L^p(\mu)}^p\\
&\lesssim \Vert a\Vert\ci{\text{BMO}\ci{\text{prod},\bfD}(\mu,\lambda,p)}^{p}\Vert f\Vert\ci{L^p(\mu)}^p,
\end{align*}
concluding the proof.
\end{proof}

Note that by applying the Monotone Convergence Theorem coupled with the weighted Littlewood--Paley estimates, Proposition \ref{prop:ParaproductTwoWeightEquiv} extends to all (not necessarily finitely supported) sequences.

Holmes--Petermichl--Wick \cite[Proposition~6.1]{holmes-petermichl-wick} prove that
\begin{equation*}
\Vert \Pi_{a}^{(1,1)}\Vert\ci{L^{p}(\mu)\rightarrow L^{p}(\lambda)}\lesssim\Vert a\Vert\ci{\text{BMO}\ci{\text{prod},\bfD}(\nu)},
\end{equation*}
where $\nu:=\mu^{1/p}\lambda^{-1/p}$ (which is an $A_2$ weight), $1 < p < \infty$ (in the general multiparameter case, the analogous result is due to Airta \cite{airta}).
From this and Proposition \ref{prop:ParaproductTwoWeightEquiv} it follows that for all $1 < p < \infty$ we have
\begin{equation*}
\Vert a\Vert\ci{\text{BMO}\ci{\text{prod},\bfD}(\mu,\lambda,p)} \lesssim \Vert a\Vert\ci{\text{BMO}\ci{\text{prod},\bfD}(\nu)}.
\end{equation*}
Since also $\nu=(\lambda')^{1/p'}(\mu')^{-1/p'}$, where $\mu':=\mu^{-1/(p-1)}$, $\lambda':=\lambda^{-1/(p-1)}$, we deduce as well
\begin{equation*}
\Vert a\Vert\ci{\text{BMO}\ci{\text{prod},\bfD}(\lambda',\mu',p')}\lesssim\Vert \Pi_{a}^{(1,1)}\Vert\ci{L^{p'}(\lambda')\rightarrow L^{p'}(\mu')}\lesssim\Vert a\Vert\ci{\text{BMO}\ci{\text{prod},\bfD}(\nu)}.
\end{equation*}
We show now that $\Vert \Pi_{a}^{(1,1)} \Vert\ci{L^p(\mu) \rightarrow L^p(\lambda)}$ and $\Vert a\Vert\ci{\text{BMO}\ci{\text{prod},\bfD}(\nu)}$ are actually equivalent, for all $1<p<\infty$.

\begin{prop}
\label{prop:ParaproductOneWeightEquiv}
Let $1 < p < \infty$ and dyadic biparameter $A_p$ weights $\mu$ and $\lambda$ on $\R\times\R$. Define $\nu := \mu^{1/p} \lambda^{-1/p}$. Then, there holds
\begin{equation*}
\Vert a\Vert\ci{\emph{BMO}\ci{\emph{prod},\bfD}(\nu)}\sim\Vert \Pi_{a}^{(1,1)}\Vert\ci{L^{p}(\mu)\rightarrow L^{p}(\lambda)}\sim\Vert \Pi_{a}^{(1,1)}\Vert\ci{L^{p'}(\lambda')\rightarrow L^{p'}(\mu')},
\end{equation*}
where the implied constants depend only on $p,[\mu]\ci{A_p,\bfD}$ and $[\lambda]\ci{A_p,\bfD}$.
\end{prop}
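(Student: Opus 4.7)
The estimate $\Vert\Pi_a^{(1,1)}\Vert\ci{L^p(\mu)\to L^p(\lambda)}\lesssim\Vert a\Vert\ci{\BMOprodD(\nu)}$ and its dual counterpart have already been recalled from Holmes--Petermichl--Wick and Airta in the discussion preceding the statement, so what remains is the matching lower bound
\begin{equation*}
\Vert a\Vert\ci{\BMOprodD(\nu)}\lesssim M:=\Vert\Pi_a^{(1,1)}\Vert\ci{L^p(\mu)\to L^p(\lambda)}
\end{equation*}
for every $1<p<\infty$. The plan is to exploit Proposition \ref{prop:ParaproductTwoWeightEquiv} in the symmetric case $(p_0,\mu_0,\lambda_0)=(2,\nu,\nu^{-1})$, which is admissible since $\nu$ is biparameter $A_2$ and $\nu^{1/2}(\nu^{-1})^{-1/2}=\nu$. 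This will identify
\begin{equation*}
\Vert a\Vert\ci{\BMOprodD(\nu)}=\Vert a\Vert\ci{\BMOprodD(\nu,\nu^{-1},2)}\sim\Vert\Pi_a^{(1,1)}\Vert\ci{L^2(\nu)\to L^2(\nu^{-1})},
\end{equation*}
reducing matters to comparing the paraproduct operator norm in the balanced pairing $L^2(\nu)\to L^2(\nu^{-1})$ against the general $L^p(\mu)\to L^p(\lambda)$ norm, both sharing the Bloom weight $\nu$.

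From testing $\Pi_a^{(1,1)}$ on $\1\ci{\Omega}$ with $\Omega:=\text{sh}(\cU)$, weighted Littlewood--Paley will yield $\Vert P\ci{\cU}a\Vert\ci{L^p(\lambda)}\lesssim M\mu(\Omega)^{1/p}$. Using the averaging identities \eqref{averages Ap}--\eqref{multiplied averages Ap} to rewrite $\langle\nu^{-1}\rangle\ci{R}\sim\langle\mu\rangle\ci{R}^{-1/p}\langle\lambda\rangle\ci{R}^{1/p}$, I will then convert the sum $\sum_{R\in\cU}|a\ci{R}|^2\langle\nu^{-1}\rangle\ci{R}$ into an integral, split it into two biparameter Triebel--Lizorkin square functions of weights $\lambda,\mu'$ and exponents $p,p'$ by a pointwise Cauchy--Schwarz in $R$, and then apply Hölder in $x$ with dual exponents $p,p'$. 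Lemma \ref{l: tl-upper bound} will convert these TL square functions back to weighted Littlewood--Paley norms, and a dual testing involving the adjoint $(\Pi_a^{(1,1)})^*$---which acts $L^{p'}(\lambda')\to L^{p'}(\mu')$ with norm $M$ by duality---will be used to bound the second factor by $M\lambda'(\Omega)^{1/p'}$.

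The main obstacle will be the resulting product $M^2\mu(\Omega)^{1/p}\lambda'(\Omega)^{1/p'}$: Hölder's inequality gives only $\nu(\Omega)\leq\mu(\Omega)^{1/p}\lambda'(\Omega)^{1/p'}$, the wrong direction for dividing through by $\nu(\Omega)$ to conclude a BMO bound. To close this gap---most delicately in the range $2<p<\infty$, where a genuine substitute for the two-weight Hölder inequality must be engineered---I plan to invoke the Airta--Li--Martikainen--Vuorinen one-weight equivalence $\Vert a\Vert\ci{\BMOprodD(w,w,q)}\sim\Vert a\Vert\ci{\BMOprodD}$ (the particular case $\mu=\lambda$, $q>2$ of Theorem \ref{t: equivBMO}) together with the Triebel--Lizorkin reverse estimate from Corollary \ref{c: tl-lower bound}. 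These will allow one weighted factor in the Hölder product to be traded for an unweighted square function norm, contracting the product back to a quantity genuinely governed by $\nu(\Omega)$. In the easier range $1<p\leq 2$, Proposition \ref{prop:ParaproductTwoWeightEquiv} furnishes the shortcut $M\sim\Vert a\Vert\ci{\BMOprodD(\mu,\lambda,p)}$, so the remaining step reduces to the BMO-to-BMO inequality $\Vert a\Vert\ci{\BMOprodD(\nu)}\lesssim\Vert a\Vert\ci{\BMOprodD(\mu,\lambda,p)}$, which follows from the same Triebel--Lizorkin and dual-testing ingredients.
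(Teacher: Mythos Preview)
Your plan has a concrete error and an unresolved obstacle, and the paper's route is both different and cleaner.

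\textbf{The adjoint step is wrong.} The formal $L^2$-adjoint of $\Pi_a^{(1,1)}$ is $\Pi_{\bar a}^{(0,0)}$, that is $(\Pi_a^{(1,1)})^*g=\sum_{R}\overline{a\ci{R}}\,g\ci{R}\,\1\ci{R}/|R|$. Testing this on $\1\ci{\Omega}$ yields $\sum_{R}\overline{a\ci{R}}(\1\ci{\Omega})\ci{R}\,\1\ci{R}/|R|$, and $(\1\ci{\Omega})\ci{R}=0$ whenever $R\subseteq\Omega$; so it gives no information about the truncated square function $S\ci{\cU}a$. What would control $\Vert S\ci{\cU}a\Vert\ci{L^{p'}(\mu')}$ is testing $\Pi_a^{(1,1)}$ itself on $\1\ci{\Omega}$ in the pairing $L^{p'}(\lambda')\to L^{p'}(\mu')$, but the resulting constant is $C_2(a):=\Vert\Pi_a^{(1,1)}\Vert\ci{L^{p'}(\lambda')\to L^{p'}(\mu')}$, not $M$. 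So you cannot bound the second factor by $M\,\lambda'(\Omega)^{1/p'}$.

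\textbf{The wrong-direction obstacle is real and your patch is not a proof.} Even with the correction above, your Cauchy--Schwarz/TL scheme yields at best $\sum_{R\in\cU}|a\ci{R}|^2\langle\nu^{-1}\rangle\ci{R}\lesssim M\cdot C_2(a)\cdot\mu(\Omega)^{1/p}\lambda'(\Omega)^{1/p'}$, and H\"older only gives $\nu(\Omega)\leq\mu(\Omega)^{1/p}\lambda'(\Omega)^{1/p'}$. The sentence invoking ALMV and Corollary \ref{c: tl-lower bound} to ``contract the product back'' is not an argument; those tools appear in the paper only in Proposition \ref{p: equivBMO-pgtr2}, after the present proposition has been established (and the first line of that proof already uses this proposition at $p=2$), so one must be careful about circularity.

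\textbf{The paper's device avoids the obstacle altogether.} Instead of testing on $\1\ci{\Omega}$, the paper works with the bilinear form $B(f,g)=\sum_{R\in\cU}|a\ci{R}|^2\langle f\rangle\ci{R}\langle g\rangle\ci{R}\langle\nu^{-1}\rangle\ci{R}$, bounds it by $C_1(a)\,C_2(a)\,\Vert f\Vert\ci{L^p(\mu)}\Vert g\Vert\ci{L^{p'}(\lambda')}$ via the same Cauchy--Schwarz/H\"older step, and then chooses the \emph{weighted} test functions $f=\mu^{-1/p}\nu^{1/p}\1\ci{\sh(\cU)}$, $g=\lambda^{1/p}\nu^{1/p'}\1\ci{\sh(\cU)}$. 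These satisfy $fg\equiv 1$ on $\sh(\cU)$, so Jensen gives $\langle f\rangle\ci{R}\langle g\rangle\ci{R}\geq 1$, while $\Vert f\Vert\ci{L^p(\mu)}\Vert g\Vert\ci{L^{p'}(\lambda')}=\nu(\sh(\cU))$ exactly. This yields $C(a)^2\leq C_1(a)C_2(a)$, and combined with the already-known $C_i(a)\lesssim C(a)$ one bootstraps to $C(a)\sim C_1(a)\sim C_2(a)$ for all $1<p<\infty$ in one stroke---no case split, no ALMV, no Triebel--Lizorkin machinery.
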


\begin{proof}
Recall that since $\mu$ is a dyadic biparameter $A_p$ weight, $\mu':=\mu^{-1/(p-1)}$ is a dyadic biparameter $A_{p'}$ weight with $[\mu']\ci{A_{p'},\bfD}=[\mu]\ci{A_{p},\bfD}^{p'-1}$. Similarly, $\lambda':=\lambda^{-1/(p-1)}$ is a dyadic biparameter $A_{p'}$ weight with $[\lambda']\ci{A_{p'},\bfD}=[\lambda]\ci{A_{p},\bfD}^{p'-1}$, and $\nu^{-1}$ is a dyadic biparameter $A_2$ weight  with $[\nu^{-1}]\ci{A_2,\bfD}=[\nu]\ci{A_2,\bfD}$. In particular, $\mu',\lambda',\nu^{-1}$ are also locally integrable. Using these observations and since the sequence $a$ is finitely supported, it is easy to see that
\begin{equation*}
\Vert a\Vert\ci{\text{BMO}\ci{\text{prod},\bfD}(\nu)},~\Vert \Pi_{a}^{(1,1)}\Vert\ci{L^{p}(\mu)\rightarrow L^{p}(\lambda)},~\Vert \Pi_{a}^{(1,1)}\Vert\ci{L^{p'}(\lambda')\rightarrow L^{p'}(\mu')}<\infty.
\end{equation*}
For brevity we set $C(a):=\Vert a\Vert\ci{\text{BMO}\ci{\text{prod},\bfD}(\nu)}$. By the weighted Littlewood--Paley estimates we have that
\begin{equation*}
\Vert \Pi_{a}^{(1,1)}\Vert\ci{L^{p}(\mu)\rightarrow L^{p}(\lambda)}\sim C_{1}(a),\qquad\Vert \Pi_{a}^{(1,1)}\Vert\ci{L^{p'}(\lambda')\rightarrow L^{p'}(\mu')}\sim C_{2}(a),
\end{equation*}
where $C_{1}(a), C_{2}(a)$ are the best finite nonnegative constants such that
\begin{equation*}
\left\Vert\left(\sum_{R\in\bfD}|a\ci{R}|^2\La |f|\Ra\ci{R}^2\frac{\1\ci{R}}{|R|}\right)^{1/2}\right\Vert\ci{L^{p}(\lambda)}\leq C_{1}(a)\Vert f\Vert\ci{L^{p}(\mu)},
\end{equation*}
\begin{equation*}
\left\Vert\left(\sum_{R\in\bfD}|a\ci{R}|^2\La |f|\Ra\ci{R}^2\frac{\1\ci{R}}{|R|}\right)^{1/2}\right\Vert\ci{L^{p'}(\mu')}\leq C_{2}(a)\Vert f\Vert\ci{L^{p'}(\lambda')},
\end{equation*}
for all measurable functions $f$ on $\R\times\R$. Thus, it suffices to prove that
\begin{equation}
\label{local_comparability}
C(a)\sim C_1(a)\sim C_2(a),
\end{equation}
where all implied constants depend on $p,[\mu]\ci{A_p,\bfD},[\lambda]\ci{A_p,\bfD}$. We will use \emph{bilinear} estimates. Fix any $\cU\subseteq\bfD$. Let $f,g$ be any two measurable functions on $\R\times\R$ taking nonnegative values, and consider the bilinear form
\begin{equation*}
B(f,g):=\sum_{R\in\cU}|a\ci{R}|^2\La f\Ra\ci{R}\La g\Ra\ci{R}\La\nu^{-1}\Ra\ci{R}.
\end{equation*}
Clearly $B(f,g)=\int_{\R\times\R}F(x)\nu^{-1}(x)dx$, where
\begin{equation*}
F:=\sum_{R\in\cU}|a\ci{R}|^2\La f\Ra\ci{R}\La g\Ra\ci{R}\frac{\1\ci{R}}{|R|}.
\end{equation*}
By Cauchy--Schwarz and H\ddoto lder's inequality we obtain
\begin{align*}
&B(f,g)=\int_{\R\times\R}F(x)\nu^{-1}(x)dx\\&
\leq\int_{\R\times\R}\left(\sum_{R\in\cU}|a\ci{R}|^2(\La f\Ra\ci{R})^2\frac{\1\ci{R}(x)}{|R|}\right)^{1/2}\left(\sum_{R\in\cU_n}|a\ci{R}|^2(\La g\Ra\ci{R})^2\frac{\1\ci{R}(x)}{|R|}\right)^{1/2}\nu^{-1}(x)dx\\
&=\int_{\R\times\R}\left(\sum_{R\in\cU}|a\ci{R}|^2(\La f\Ra\ci{R})^2\frac{\1\ci{R}(x)}{|R|}\right)^{1/2}\lambda^{1/p}(x)\left(\sum_{R\in\cU}|a\ci{R}|^2(\La g\Ra\ci{R})^2\frac{\1\ci{R}(x)}{|R|}\right)^{1/2}\mu^{-1/p}(x)dx\\
&\leq\left(\int_{\R\times\R}\left(\sum_{R\in\cU}|a\ci{R}|^2(\La f\Ra\ci{R})^2\frac{\1\ci{R}(x)}{|R|}\right)^{p/2}\lambda(x)dx\right)^{1/p}\\
&\cdot\left(\int_{\R\times\R}\left(\sum_{R\in\cU}|a\ci{R}|^2(\La g\Ra\ci{R})^2\frac{\1\ci{R}(x)}{|R|}\right)^{p'/2}\mu^{-p'/p}(x)dx\right)^{1/p'}\\
&\leq C_1(a)C_2(a)\Vert f\Vert\ci{L^{p}(\mu)}\Vert g\Vert\ci{L^{p'}(\lambda')}.
\end{align*}
We now pick
\begin{equation*}
f:=\mu^{-1/p}\nu^{1/p}\1\ci{\sh(\cU)},\qquad g:=\lambda^{1/p}\nu^{1/p'}\1\ci{\sh(\cU)}.
\end{equation*}
We have
\begin{equation*}
\Vert f\Vert\ci{L^{p}(\mu)}^{p}=\int_{\sh(\cU)}\mu^{-1}(x)\nu(x)\mu(x)dx=\nu(\sh(\cU)),
\end{equation*}
\begin{equation*}
\Vert g\Vert\ci{L^{p'}(\lambda')}^{p'}=\int_{\sh(\cU)}\lambda^{p'/p}(x)\nu(x)\lambda^{-1/(p-1)}(x)dx=\nu(\sh(\cU)),
\end{equation*}
therefore
\begin{equation*}
\Vert f\Vert\ci{L^{p}(\mu)}\Vert g\Vert\ci{L^{p'}(\lambda')}=\nu(\sh(\cU)).
\end{equation*}
So we have
\begin{equation*}
\sum_{R\in\cU}|a\ci{R}|^2\La \mu^{-1/p}\nu^{1/p}\Ra\ci{R}\La \lambda^{1/p}\nu^{1/p'}\Ra\ci{R}\La\nu^{-1}\Ra\ci{R}\leq C_1(a)C_2(a)\nu(\sh(\cU)).
\end{equation*}
Set $w:=\mu^{-1/p}\nu^{1/p}$. Then clearly
\begin{equation*}
\mu^{-1/p}\nu^{1/p}\lambda^{1/p}\nu^{1/p'}=1,
\end{equation*}
so an immediate application of Jensen's inequality (with exponent $-1<0$) gives
\begin{equation*}
\La \mu^{-1/p}\nu^{1/p}\Ra\ci{R}\La \lambda^{1/p}\nu^{1/p'}\Ra\ci{R}=\La w\Ra\ci{R}\La w^{-1}\Ra\ci{R}\geq 1.
\end{equation*}
Thus
\begin{equation*}
\sum_{R\in\cU}|a\ci{R}|^2\La\nu^{-1}\Ra\ci{R}\leq C_1(a)C_2(a)\nu(\sh(\cU)).
\end{equation*}
It follows that $C(a)^2\leq C_1(a)C_2(a)$. Since we already know that $C(a)\gtrsim C_i(a),~i=1,2$, we deduce $C(a)\sim C_1(a)\sim C_2(a)$.
\end{proof}

Note that by applying the Monotone Convergence Theorem coupled with the weighted Littlewood--Paley estimates, Proposition \ref{prop:ParaproductOneWeightEquiv} extends to all (not necessarily finitely supported) sequences.

Combining Proposition \ref{prop:ParaproductTwoWeightEquiv} and \ref{prop:ParaproductOneWeightEquiv}, we already deduce that if $1 < p \leq 2$, then
\begin{equation*}
\Vert a\Vert\ci{\text{BMO}\ci{\text{prod},\bfD}(\nu)}\sim\Vert a\Vert\ci{\text{BMO}\ci{\text{prod},\bfD}(\mu,\lambda,p)}\gtrsim\Vert a\Vert\ci{\text{BMO}\ci{\text{prod},\bfD}(\lambda',\mu',p')},
\end{equation*}
and that if $2 \leq p < \infty$, then
\begin{equation*}
\Vert a\Vert\ci{\text{BMO}\ci{\text{prod},\bfD}(\nu)}\sim\Vert a\Vert\ci{\text{BMO}\ci{\text{prod},\bfD}(\lambda',\mu',p')}\gtrsim\Vert a\Vert\ci{\text{BMO}\ci{\text{prod},\bfD}(\mu,\lambda,p)}.
\end{equation*}

The next proposition will complete the proof of Theorem \ref{t: equivBMO}.

\begin{prop}
\label{p: equivBMO-pgtr2}
Let $a,p,\mu,\lambda,\nu$ be as above. Assume $p>2$. Then, there holds
\begin{equation*}
\Vert a\Vert\ci{\emph{BMO}\ci{\emph{prod},\bfD}(\nu)}\lesssim \Vert a\Vert\ci{\emph{BMO}\ci{\emph{prod},\bfD}(\mu,\lambda,p)},
\end{equation*}
where the implied constant depends only on $p,[\mu]\ci{A_p,\bfD}$ and $[\lambda]\ci{A_p,\bfD}$.
\end{prop}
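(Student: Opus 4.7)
The plan is to first reduce the inequality to a ``dual-exponent'' two-weight inequality via the paraproduct characterization, then rewrite both sides as unweighted $L^{q}$-norms of Triebel--Lizorkin square functions, and finally close the argument using the Airta--Li--Martikainen--Vuorinen one-weight/unweighted BMO equivalence.

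\emph{Step 1: reduction via duality of paraproducts.} Proposition \ref{prop:ParaproductOneWeightEquiv} gives
$\Vert a\Vert\ci{\text{BMO}\ci{\text{prod},\bfD}(\nu)}\sim \Vert \Pi_a^{(1,1)}\Vert\ci{L^{p'}(\lambda')\to L^{p'}(\mu')}$; since $p'<2$, Proposition \ref{prop:ParaproductTwoWeightEquiv} at the exponent $p'$ yields the further equivalence $\Vert \Pi_a^{(1,1)}\Vert\ci{L^{p'}(\lambda')\to L^{p'}(\mu')}\sim \Vert a\Vert\ci{\text{BMO}\ci{\text{prod},\bfD}(\lambda',\mu',p')}$. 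It therefore suffices to prove
\begin{equation*}
\Vert a\Vert\ci{\text{BMO}\ci{\text{prod},\bfD}(\lambda',\mu',p')} \lesssim \Vert a\Vert\ci{\text{BMO}\ci{\text{prod},\bfD}(\mu,\lambda,p)}.
\end{equation*}
In the unweighted case $\mu=\lambda=1$, this is immediate from Hölder's inequality $L^{p}(\sh(\cU))\subseteq L^{p'}(\sh(\cU))$, with the shadow factor $|\sh(\cU)|^{1/p'-1/p}$ exactly absorbing the change in normalization. The rest of the proof is devoted to reproducing this argument in the two-weight setting.

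\emph{Step 2: Triebel--Lizorkin plus unweighted Hölder.} Fix $\cU$ and set $b\ci{\cU}:=\sum_{R\in\cU} a\ci{R} h\ci{R}$. Lemma \ref{l: tl-upper bound} and Corollary \ref{c: tl-lower bound}, applied respectively with weight $\lambda$ and exponent $p$, and with weight $\mu'$ and exponent $p'$ (together with the averages equivalence $\La\mu'\Ra\ci{R}^{2/p'}\sim\La\mu\Ra\ci{R}^{-2/p}$), convert the two-weight norms appearing on both sides into unweighted $L^{p}$- and $L^{p'}$-norms of Triebel--Lizorkin-type square functions built from $b\ci{\cU}$. Using the averages formula~\eqref{multiplied averages Ap} in the form $\La\nu^{-1}\Ra\ci{R}\sim \La\mu\Ra\ci{R}^{-1/p}\La\lambda\Ra\ci{R}^{1/p}$ together with a pointwise Cauchy--Schwarz applied to the Haar coefficients, one interpolates between these two square functions via an auxiliary sequence $\tilde a\ci{R}:=a\ci{R}\La\nu^{-1}\Ra\ci{R}^{1/2}$. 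Hölder's inequality on $\sh(\cU)$ then exchanges the unweighted $L^{p}$ and $L^{p'}$ norms at the cost of a factor of $|\sh(\cU)|^{1/p'-1/p}$.

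\emph{Step 3: Airta et al.\ to close.} The remaining mismatch between $|\sh(\cU)|$ and the weighted shadow measures $\mu(\sh(\cU))$, $\lambda'(\sh(\cU))$ is absorbed via the Airta--Li--Martikainen--Vuorinen equivalence, which in our terminology is precisely the case $\mu=\lambda$ of Theorem \ref{t: equivBMO} at $p>2$: for any biparameter $A_{p}$ weight $w$,
\begin{equation*}
\Vert \tilde a\Vert\ci{\text{BMO}\ci{\text{prod},\bfD}(w,w,p)} \sim \Vert \tilde a\Vert\ci{\text{BMO}\ci{\text{prod},\bfD}}.
\end{equation*}
Applying this with $w=\mu$ (and, if necessary, with $w=\lambda$) to the modified sequence produced in Step 2 yields the desired bound. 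The central obstacle is the absence of a direct ``two-weight Hölder inequality'' passing from $L^{p}(\lambda)$ to $L^{p'}(\mu')$ with the correctly weighted normalization: the Triebel--Lizorkin square function is the right bookkeeping device to move weights between the Haar coefficients and the integration measure so that unweighted Hölder becomes applicable, and the Airta et al.\ one-weight/unweighted equivalence is then essential to compensate for the residual mismatch between unweighted and weighted shadow measures.
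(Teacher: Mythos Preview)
Your overall strategy is the same as the paper's: reduce via Propositions \ref{prop:ParaproductTwoWeightEquiv}--\ref{prop:ParaproductOneWeightEquiv}, then use the Triebel--Lizorkin square function to shuttle the weights between the integration measure and the Haar coefficients, and finally invoke the Airta--Li--Martikainen--Vuorinen one-weight/unweighted equivalence. Step~1 is fine and matches what the paper does (the paper records this as the already-established equivalence $\Vert a\Vert\ci{\text{BMO}\ci{\text{prod},\bfD}(\nu)}\sim\Vert a\Vert\ci{\text{BMO}\ci{\text{prod},\bfD}(\lambda',\mu',p')}$ for $p\geq 2$).

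The gap is in Steps~2--3. Your ``pointwise Cauchy--Schwarz'' and ``unweighted H\"older on $\sh(\cU)$ with factor $|\sh(\cU)|^{1/p'-1/p}$'' do not chain into a proof. After Triebel--Lizorkin, the $L^{p'}$-side carries the interior factor $\langle\mu\rangle\ci{R}^{-2/p}$ while the $L^{p}$-side carries $\langle\lambda\rangle\ci{R}^{2/p}$; H\"older on $\sh(\cU)$ only converts $L^{p}$ to $L^{p'}$ of the \emph{same} square function, and Cauchy--Schwarz gives a lower bound for their product, not a comparison between them. Moreover the auxiliary sequence $\tilde a\ci{R}=a\ci{R}\langle\nu^{-1}\rangle\ci{R}^{1/2}$ does not, via Triebel--Lizorkin, land in either of the norms you need: one computes $|\tilde a\ci{R}|^{2}\langle\mu\rangle\ci{R}^{2/p}\sim|a\ci{R}|^{2}\langle\mu\rangle\ci{R}^{1/p}\langle\lambda\rangle\ci{R}^{1/p}$, which is neither $|a\ci{R}|^{2}\langle\mu\rangle\ci{R}^{-2/p}$ nor $|a\ci{R}|^{2}\langle\lambda\rangle\ci{R}^{2/p}$.

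The fix is to use the sequence with the full first power, $d\ci{R}:=a\ci{R}\langle\nu^{-1}\rangle\ci{R}$. Then Triebel--Lizorkin (Lemma~\ref{l: tl-upper bound} and Corollary~\ref{c: tl-lower bound}) together with $\langle\nu^{-1}\rangle\ci{R}^{2}\langle\mu\rangle\ci{R}^{2/p}\sim\langle\lambda\rangle\ci{R}^{2/p}$ gives the clean identity
\[
\Big\Vert\Big(\sum_{R\in\cU}|d\ci{R}|^{2}\tfrac{\1\ci{R}}{|R|}\Big)^{1/2}\Big\Vert\ci{L^{p}(\mu)}
\sim
\Big\Vert\Big(\sum_{R\in\cU}|a\ci{R}|^{2}\tfrac{\1\ci{R}}{|R|}\Big)^{1/2}\Big\Vert\ci{L^{p}(\lambda)},
\]
so $\Vert d\Vert\ci{\text{BMO}\ci{\text{prod},\bfD}(\mu,\mu,p)}\sim\Vert a\Vert\ci{\text{BMO}\ci{\text{prod},\bfD}(\mu,\lambda,p)}$. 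This is exactly how the paper proceeds: it first bounds $\Vert a\Vert\ci{\text{BMO}\ci{\text{prod},\bfD}(\nu)}\lesssim\Vert d\Vert\ci{\text{BMO}\ci{\text{prod},\bfD}}$ (your $\tilde a$ appears only as the intermediate step linking these two quantities at $p=2$), then applies Airta et al.\ in the form $\Vert d\Vert\ci{\text{BMO}\ci{\text{prod},\bfD}}\lesssim\Vert d\Vert\ci{\text{BMO}\ci{\text{prod},\bfD}(\mu,\mu,2)}$ followed by H\"older (weighted, $L^{2}(\mu)\hookrightarrow L^{p}(\mu)$ on $\sh(\cU)$, not the unweighted $L^{p}\to L^{p'}$ you describe), and finishes with the Triebel--Lizorkin identity above. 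No pointwise Cauchy--Schwarz is needed, and no unweighted shadow measure $|\sh(\cU)|$ enters the two-weight argument.
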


\begin{proof}
First of all, note that we have already proved that
\begin{equation*}
\Vert c\Vert\ci{\text{BMO}\ci{\text{prod},\bfD}(\nu,1,2)}\lesssim \Vert c\Vert\ci{\text{BMO}\ci{\text{prod},\bfD}(1,\nu^{-1},2)},
\end{equation*}
for \emph{any} sequence $c=\lbrace c\ci{R}\rbrace\ci{R\in\bfD}$. Applying this for the sequence $c=\lbrace c\ci{R}:=a\ci{R}\La \nu^{-1}\Ra\ci{R}^{1/2}\rbrace\ci{R\in\bfD}$, we deduce
\begin{align*}
\Vert a\Vert\ci{\text{BMO}\ci{\text{prod},\bfD}(\nu)}&=
\Vert \lbrace a\ci{R}\La\nu^{-1}\Ra\ci{R}^{1/2}\rbrace\ci{R\in\bfD}\Vert\ci{\text{BMO}\ci{\text{prod},\bfD}(\nu,1,2)}\lesssim
\Vert \lbrace a\ci{R}\La\nu^{-1}\Ra\ci{R}^{1/2}\rbrace\ci{R\in\bfD}\Vert\ci{\text{BMO}\ci{\text{prod},\bfD}(1,\nu^{-1},2)}\\
&=\Vert \lbrace a\ci{R}\La\nu^{-1}\Ra\ci{R}\rbrace\ci{R\in\bfD}\Vert\ci{\text{BMO}\ci{\text{prod},\bfD}}.
\end{align*}
Since $\mu$ is a dyadic biparameter $A_p$ weight and $p>2$, from the second half of the proof of \cite[Theorem 3.2]{martikainen et al} we have
\begin{equation*}
\Vert c\Vert\ci{\text{BMO}\ci{\text{prod},\bfD}}\lesssim\sup_{\cU\subseteq\bfD}\frac{1}{(\mu(\sh(\cU)))^{1/2}}\left\Vert\left(\sum_{R\in\cU}|c\ci{R}|^2\frac{\1\ci{R}}{|R|}\right)^{1/2}\right\Vert\ci{L^{2}(\mu)},
\end{equation*}
where the implied constant depends only on $p$ and $[\mu]\ci{A_p,\bfD}$, for any sequence $c=\lbrace c\ci{R}\rbrace\ci{R\in\bfD}$, so since $p>2$, by H\ddoto lder's inequality we deduce
\begin{equation*}
\Vert c\Vert\ci{\text{BMO}\ci{\text{prod},\bfD}}\lesssim\sup_{\cU\subseteq\bfD}\frac{1}{(\mu(\sh(\cU)))^{1/p}}\left\Vert\left(\sum_{R\in\cU}|c\ci{R}|^2\frac{\1\ci{R}}{|R|}\right)^{1/2}\right\Vert\ci{L^{p}(\mu)}.
\end{equation*}
Thus
\begin{align*}
\Vert \lbrace a\ci{R}\La\nu^{-1}\Ra\ci{R} \rbrace\ci{R\in\bfD}\Vert\ci{\text{BMO}\ci{\text{prod},\bfD}}\lesssim
\sup_{\cU\subseteq\bfD}\frac{1}{(\mu(\sh(\cU)))^{1/p}}\left\Vert\left(\sum_{R\in\cU}|a\ci{R}\La\nu^{-1}\Ra\ci{R}|^2\frac{\1\ci{R}}{|R|}\right)^{1/2}\right\Vert\ci{L^{p}(\mu)}.
\end{align*}
For all $\cU\subseteq\bfD$, using Lemma \ref{l: tl-upper bound} and Corollary \ref{c: tl-lower bound} (since $a$ is finitely supported) and the fact that $\La \nu^{-1}\Ra\ci{R}\sim\La\mu\Ra\ci{R}^{-1/p}\La\lambda\Ra\ci{R}^{1/p}$, for all $R\in\bfD$, we get
\begin{align*}
\left\Vert\left(\sum_{R\in\cU}|a\ci{R}\La\nu^{-1}\Ra\ci{R}|^2\frac{\1\ci{R}}{|R|}\right)^{1/2}\right\Vert\ci{L^{p}(\mu)}
&\sim \left\Vert\left(\sum_{R\in\cU}|a\ci{R}|^2\La\nu^{-1}\Ra\ci{R}^2\La\mu\Ra\ci{R}^{2/p}\frac{\1\ci{R}}{|R|}\right)^{1/2}\right\Vert\ci{L^{p}}\\
&\sim\left\Vert\left(\sum_{R\in\cU}|a\ci{R}|^2\La\lambda\Ra\ci{R}^{2/p}\frac{\1\ci{R}}{|R|}\right)^{1/2}\right\Vert\ci{L^{p}}\sim \left\Vert\left(\sum_{R\in\cU}|a\ci{R}|^2\frac{\1\ci{R}}{|R|}\right)^{1/2}\right\Vert\ci{L^{p}(\lambda)},
\end{align*}
concluding the proof.
\end{proof}

\section{Estimates for iterated commutators of Haar multipliers}
\label{sec:HaarMultipliers}

Let $\Sigma$ be the set of all finitely supported maps $\sigma:\cD\rightarrow\lbrace-1,0,1\rbrace$. In the sequel, the elements of $\Sigma$ similar spaces will be called \emph{sign choices}, and will always be considered to be finitely supported. For each $\sigma\in\Sigma$, we consider the \emph{Haar multiplier} $T_{\sigma},$ sometimes also called \emph{martingale transform}, on the real line given by
\begin{equation*}
T_{\sigma}f:=\sum_{I\in\cD}\sigma(I)f\ci{I}h\ci{I},\qquad f\in L^1\ti{loc}(\R).
\end{equation*}
Moreover, we consider Haar multipliers $T_{\sigma}^1,~T_{\sigma}^2$ acting on functions $f\in L^1\ti{loc}(\R^2)$ separately in each of the two variables, namely
\begin{equation*}
T_{\sigma}^1f(t,s):=T_{\sigma}(f(\fdot,s))(t),\qquad T_{\sigma}^2f(t,s):=T_{\sigma}(f(t,\fdot))(s),
\end{equation*}
for a.~e. $(t,s)\in\R^2$. It is clear that
\begin{equation*}
T_{\sigma}^{1}f=\sum_{I,J\in\cD}\sigma(I)f\ci{I\times J}h\ci{I\times J}=\sum_{I\in\cD}\sigma(I)Q^{1}\ci{I}f,
\end{equation*}
\begin{equation*}
 T_{\sigma}^{2}f=\sum_{I,J\in\cD}\sigma(J)f\ci{I\times J}h\ci{I\times J}=\sum_{J\in\cD}\sigma(J)Q^{2}\ci{J}f.
\end{equation*}

The main result of this section is Theorem \ref{t: HaarmultBMO}, which we recall here.

\theoremstyle{plain}
\newtheorem*{thm:HaarmultBMO*}{Theorem \ref{t: HaarmultBMO}}
\begin{thm:HaarmultBMO*}
Let $1 < p < \infty.$
Consider a function $b \in L^1\ti{loc}(\R^2),$ dyadic biparameter $A_p$ weights $\mu,$ $\lambda$ and define $\nu:=\mu^{1/p}\lambda^{-1/p}$. Then
\begin{equation*}
\sup_{\sigma_1,\sigma_2\in\Sigma}\Vert[T_{\sigma_1}^1,[T_{\sigma_2}^2,b]]\Vert\ci{L^p(\mu)\rightarrow L^p(\lambda)}\sim\Vert b\Vert\ci{\emph{BMO}\ci{\emph{prod},\bfD}(\nu)},
\end{equation*}
where the implied constants depend only on $p,$ $[\mu]\ci{A_p,\bfD}$ and $[\lambda]\ci{A_p,\bfD}$.
\end{thm:HaarmultBMO*}

In the unweighted case with $p = 2,$ Theorem \ref{t: HaarmultBMO} is proved by Blasco--Pott \cite{blasco-pott}. They get this result by averaging over the set of sign choices and then using orthogonality in Hilbert spaces. While we still rely on averaging over sign choices in our proof, we use a multiparameter extension of Khintchine's inequalities as well as vector-valued estimates to compensate for the lack of orthogonality for $p \neq 2.$

In the sequel we fix $1<p<\infty$, $b\in L^1\ti{loc}(\R^2)$, dyadic biparameter $A_p$ weights $\mu,\lambda$ on $\R^2$, and we set $\nu:=\mu^{1/p}\lambda^{-1/p}$. Note that we will be systematically suppressing from the notation dependence of implied constants on the value of $p$ and the Muckenhoupt characteristics $[\mu]\ci{A_p,\bfD}$ and $[\lambda]\ci{A_p,\bfD}$.

\subsection{Relating Haar multipliers to a ``symmetrized'' paraproduct}

Consider the ``purely non-cancellative'' and ``purely cancellative'' respectively biparameter paraproducts
\begin{equation*}
\Pi_{b}^{(1,1)}f:=\sum_{R\in\bfD}b\ci{R}\La f\Ra\ci{R}h\ci{R},\qquad \Pi_{b}^{(0,0)}f:=\sum_{R\in\bfD}b\ci{R}f\ci{R}\frac{\1\ci{R}}{|R|},
\end{equation*}
and the ``mixed non-cancellative--cancellative'' biparameter paraproducts
\begin{equation*}
\Pi_{b}^{(1,0)}f:=\sum_{R\in\bfD}b\ci{R}f^{(1,0)}\ci{R}h^{(0,1)}\ci{R},\qquad \Pi_{b}^{(0,1)}f:=\sum_{R\in\bfD}b\ci{R}f^{(0,1)}\ci{R}h^{(1,0)}\ci{R}
\end{equation*}
In the notation of each paraproduct, the superscript indicates the type of Haar functions against which the argument of the paraproduct is integrated, while the ``complementary'' pair of indices indicates the type of Haar functions appearing directly in the paraproduct.

Blasco--Pott \cite{blasco-pott} consider the ``symmetrized'' paraproduct in the biparameter setting
\begin{equation*}
\Lambda_{b}:=\Pi_{b}^{(11)}+\Pi_{b}^{(00)}+\Pi_{b}^{(10)}+\Pi_{b}^{(01)}.
\end{equation*}
Blasco--Pott \cite{blasco-pott} prove via direct computation that $\Lambda_{b}$ deserves to be called ``symmetrized'' paraproduct in the sense that
\begin{equation}
\label{equivsymparaprod}
\Lambda_{b}f=\sum_{R\in\bfD}(P\ci{R}b)f\ci{R}h\ci{R},
\end{equation}
where we recall that
\begin{equation*}
P\ci{R}b=\sum_{R'\in\bfD(R)}b\ci{R'}h\ci{R'}.
\end{equation*}
Note that in the general multiparameter setting of $\R^{\vec{d}}:=\R^{d_1}\times\dots\times\R^{d_t}$ one can also define an analogous operator $\boldsymbol{\Lambda}\ci{b}$ as a sum of generalized paraproducts in a way that one still has $\boldsymbol{\Lambda}_{b}f=\sum_{R\in\bfD}(\boldsymbol{P}\ci{R}b)f\ci{R}h\ci{R},$ where $\boldsymbol{P}\ci{R}$ denotes the multiparameter analogue to $P\ci{R}$ (and, abusing the notation, $\bfD$ denotes the set of all dyadic rectangles in the product space $\R^{\vec{d}}$).

It is important to note that for any $R=I\times J\in\bfD$, one has
\begin{equation}
\label{equivpartialproj}
P\ci{R}b(t,s)=(b-\La b(\fdot,s)\Ra\ci{I}-\La b(t,\fdot)\Ra\ci{J}+\La b\Ra\ci{I\times J})\1\ci{R}(t,s),
\end{equation}
for a.e.~$(t,s)\in\R^2$. Also, the same computation along the lines of the inclusion-exclusion principle that leads to \eqref{equivpartialproj} extends to yield a multiparameter analogue for $\boldsymbol{P}\ci{R}$. Using expressions \eqref{equivsymparaprod} and \eqref{equivpartialproj}, it is easy to see via direct computation, as remarked by Blasco--Pott \cite{blasco-pott}, that
\begin{equation}
\label{replace b by Lb-projections}
[Q\ci{I}^1,[Q\ci{J}^2,b]]=[Q\ci{I}^1,[Q\ci{J}^2,\Lambda_{b}]],\qquad\forall I,J\in\cD,
\end{equation}
and
\begin{equation}
\label{paraprod-proj}
Q^1\ci{I}\Lambda_{b}Q^1\ci{I}=0,\qquad Q^2\ci{J}\Lambda_{b}Q^2\ci{J}=0,\qquad\forall I,J\in\cD
\end{equation}
(and, as before, the analogues of these two expressions also hold in the multiparameter setting).
In fact, the weighted Littlewood--Paley estimates imply that the family of all finite linear combinations of Haar functions $h\ci{R},~R\in\bfD$ is dense in the weighted space $L^p(w)$, for any dyadic biparameter $A_p$ weight $w$, $1<p<\infty$, so one needs to check \eqref{replace b by Lb-projections} and \eqref{paraprod-proj} only on functions of this type. Note that \eqref{replace b by Lb-projections} immediately implies
\begin{equation}
\label{replace b by Lb-HaarMult}
[T_{\sigma_1}^{1},[T_{\sigma_2}^2,b]]=[T_{\sigma_1}^{1},[T_{\sigma_2}^{2},\Lambda_b]],\qquad\forall \sigma_1,\sigma_2\in\Sigma.
\end{equation}

The following lemma contains one of the most important steps towards the proof of  Theorem \ref{t: HaarmultBMO}.

\begin{lm}
\label{l: symparaprod-Haarmult-p}
There holds
\begin{equation*}
\Vert\Lambda_{b}\Vert\ci{L^{p}(\mu)\rightarrow L^{p}(\lambda)}\sim\sup_{\sigma_1,\sigma_2\in\Sigma}\Vert[T_{\sigma_1}^{1},[T_{\sigma_2}^{2},b]]\Vert\ci{L^{p}(\mu)\rightarrow L^{p}(\lambda)}.
\end{equation*}
\end{lm}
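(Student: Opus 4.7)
Denote the right-hand supremum by $\mathrm{RHS}$. The forward inequality $\mathrm{RHS}\lesssim\|\Lambda_b\|_{L^p(\mu)\to L^p(\lambda)}$ is essentially immediate: by \eqref{replace b by Lb-HaarMult}, the iterated commutator with $b$ equals the one with $\Lambda_b$, and expanding this into four compositions of $\Lambda_b$ with the Haar multipliers $T^1_{\sigma_1},T^2_{\sigma_2}$ reduces matters to the $L^p(w)$-boundedness of these multipliers. The latter holds uniformly in $\sigma\in\Sigma$ and any dyadic biparameter $A_p$ weight $w$, because $|(T_\sigma f)_R|\leq|f_R|$ gives the pointwise bound $S_{\bfD}(T_\sigma f)\leq S_{\bfD}f$, so the weighted Littlewood--Paley estimates from Section~\ref{sec:Background} apply.

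For the reverse inequality, the plan is to realize $\Lambda_b$ as an expectation of iterated commutators via randomization. By density of finite-Haar-support functions in $L^p(\mu)$, it suffices to bound $\|\Lambda_b f\|_{L^p(\lambda)}$ when $f$ has cancellative Haar expansion supported in $\cI\times\cJ$ for some finite $\cI,\cJ\subseteq\cD$. Let $(\epsilon_I)_{I\in\cI}$ and $(\eta_J)_{J\in\cJ}$ be independent Rademacher sequences, extended by zero outside $\cI,\cJ$, so the resulting sign choices lie in $\Sigma$. The crucial observation is that on functions with Haar support inside $\cI\times\cJ$ the Haar multipliers act as involutions: $(T^1_\epsilon)^2=(T^2_\eta)^2=\mathrm{Id}$. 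Expanding $[T^1_\epsilon,[T^2_\eta,\Lambda_b]]\,T^1_\epsilon T^2_\eta f$ and collapsing the products via these involutions yields
\begin{equation*}
[T^1_\epsilon,[T^2_\eta,\Lambda_b]]\,T^1_\epsilon T^2_\eta f= T^1_\epsilon T^2_\eta \Lambda_b T^1_\epsilon T^2_\eta f - T^1_\epsilon \Lambda_b T^1_\epsilon f - T^2_\eta \Lambda_b T^2_\eta f + \Lambda_b f.
\end{equation*}

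Taking expectation over $\epsilon$ and $\eta$ annihilates the first three summands by virtue of the one-sided cancellation identities \eqref{paraprod-proj}: by independence, $\mathbb{E}(\epsilon_I\epsilon_{I'})=\delta_{I,I'}$ gives $\mathbb{E}_\epsilon T^1_\epsilon \Lambda_b T^1_\epsilon=\sum_{I\in\cI}Q^1_I\Lambda_b Q^1_I=0$, and symmetrically $\mathbb{E}_\eta T^2_\eta\Lambda_b T^2_\eta=0$, as well as $\mathbb{E}_{\epsilon,\eta}T^1_\epsilon T^2_\eta\Lambda_b T^1_\epsilon T^2_\eta=\sum_{R\in\cI\times\cJ}Q_R\Lambda_b Q_R=0$ (the last vanishing because $Q_R\Lambda_b Q_R=Q^1_IQ^2_J\Lambda_b Q^2_JQ^1_I=0$ by \eqref{paraprod-proj}). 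Therefore $\mathbb{E}_{\epsilon,\eta}[T^1_\epsilon,[T^2_\eta,\Lambda_b]]\,T^1_\epsilon T^2_\eta f=\Lambda_b f$. Invoking \eqref{replace b by Lb-HaarMult} to return to $b$, the integral triangle inequality, and the uniform $L^p(\mu)$-boundedness of $T^1_\epsilon T^2_\eta$, one concludes
\begin{equation*}
\|\Lambda_b f\|_{L^p(\lambda)}\leq\mathbb{E}_{\epsilon,\eta}\|[T^1_\epsilon,[T^2_\eta,b]]\,T^1_\epsilon T^2_\eta f\|_{L^p(\lambda)}\lesssim\mathrm{RHS}\cdot\|f\|_{L^p(\mu)}.
\end{equation*}
The principal subtlety is verifying that the involution $(T^i_{\epsilon_i})^2=\mathrm{Id}$ can legitimately be exploited at each step of the algebraic manipulation; restricting to finite-Haar-support $f$ at the outset handles this cleanly. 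Notably, this line of argument obviates any need for Khintchine-type inequalities, since the cancellation is produced entirely by the structural identities \eqref{paraprod-proj}.
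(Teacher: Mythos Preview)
Your proof is correct, and it takes a genuinely different route from the paper's.

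For the direction $\mathrm{RHS}\lesssim\Vert\Lambda_b\Vert$, your argument coincides with the paper's. For the converse, the paper proceeds by averaging the $p$-th power of the commutator norm over sign choices, invoking the biparameter Khintchine inequality (Lemma~\ref{l: biparam_Khintchine}) to pass to a square function $\bigl(\sum_{R}|\cC\ci{R}f|^2\bigr)^{1/2}$, then using the Capri--Guti\'errez weighted vector-valued estimate for $M\ci{\bfD}$ to dominate from below by $\bigl(\sum_R|Q\ci{R}\cC\ci{R}f|^2\bigr)^{1/2}$, and finally applying \eqref{paraprod-proj} and the weighted Littlewood--Paley estimates to identify this with $\Vert\Lambda_b f\Vert\ci{L^p(\lambda)}$. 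Your argument, by contrast, averages the \emph{operator} $[T^1_\epsilon,[T^2_\eta,\Lambda_b]]T^1_\epsilon T^2_\eta$ itself (rather than its norm), and exploits the involution property of the multipliers on finite-Haar-support functions together with \eqref{paraprod-proj} to obtain the exact identity $\mathbb{E}_{\epsilon,\eta}[T^1_\epsilon,[T^2_\eta,\Lambda_b]]T^1_\epsilon T^2_\eta f=\Lambda_b f$; the triangle inequality and the uniform $L^p(\mu)$-boundedness of the Haar multipliers then finish the job. Your approach is more elementary: it dispenses entirely with Khintchine's inequality and the vector-valued maximal estimate. The paper's approach, on the other hand, yields the intermediate square-function comparison $\Vert(\sum_R|\cC\ci{R}f|^2)^{1/2}\Vert\ci{L^p(\lambda)}\gtrsim\Vert\Lambda_b f\Vert\ci{L^p(\lambda)}$, which is a somewhat stronger quantitative statement than what the lemma requires and may be of independent use.
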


For the proof of Lemma \ref{l: symparaprod-Haarmult-p} we will rely on a straightforward extension of Khintchine's inequalities to the multiparameter setting, which is of independent interest.

\begin{lm}
\label{l: biparam_Khintchine}
Let $(\mathbb{X}_i,\bP_i)$, $i=1,2$ be probability spaces. For $j=1,2$, let $(X^{i}_{j})_{j=1}^{N_i}$ be a Rademacher sequence on $(\mathbb{X}_i,\bP_i)$, that is $X^{i}_{j},~j=1,\ldots,N_{j}$ are independent with
\begin{equation*}
\bP_i(X^{i}_{j}=1)=\bP_i(X^{i}_{j}=-1)=1/2,\qquad j=1,\ldots,N_j.
\end{equation*}
Let $A$ be an $N_1\times N_2$ (complex) matrix. Then, there holds
\begin{equation*}
\left\Vert\sum_{j_1=1}^{N_1}\sum_{j_2=1}^{N_2}A(j_1,j_2)X^1_{j_1}\otimes X^2_{j_2}\right\Vert\ci{L^q(\mathbb{X}_1\times\mathbb{X}_2)}\sim_{q,r}\left\Vert\sum_{j_1=1}^{N_1}\sum_{j_2=1}^{N_2}A(j_1,j_2)X^1_{j_1}\otimes X^2_{j_2}\right\Vert\ci{L^r(\mathbb{X}_1\times\mathbb{X}_2)}.
\end{equation*}
for all $0<q,r<\infty$. In particular
\begin{equation*}
\left\Vert\sum_{j_1=1}^{N_1}\sum_{j_2=1}^{N_2}A(j_1,j_2)X^1_{j_1}\otimes X^2_{j_2}\right\Vert\ci{L^q(\mathbb{X}_1\times\mathbb{X}_2)}\sim_{q}\left(\sum_{j_1=1}^{N_1}\sum_{j_2=1}^{N_2}|A(j_1,j_2)|^2\right)^{1/2},\qquad\forall 0<q<\infty.
\end{equation*}
\end{lm}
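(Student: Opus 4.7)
The plan is to iterate the one-dimensional Khintchine inequality, with a single invocation of its $\ell^2$-valued Kahane extension to bridge the two iterations. Fix $0<q<\infty$ and write $F(x_1,x_2):=\sum_{j_1,j_2}A(j_1,j_2)X^1_{j_1}(x_1)X^2_{j_2}(x_2)$. For each fixed $x_2\in\mathbb{X}_2$, the function $F(\fdot,x_2)$ is a scalar Rademacher sum on $\mathbb{X}_1$,
$$F(x_1,x_2)=\sum_{j_1=1}^{N_1}c_{j_1}(x_2)\,X^1_{j_1}(x_1),\qquad c_{j_1}(x_2):=\sum_{j_2=1}^{N_2}A(j_1,j_2)\,X^2_{j_2}(x_2).$$
Applying the classical one-dimensional Khintchine inequality in the variable $x_1$ for each fixed $x_2$, raising to the $q$-th power, and integrating in $x_2$ via Fubini, I would obtain
$$\|F\|_{L^q(\mathbb{X}_1\times\mathbb{X}_2)}^q\;\sim_q\;\int_{\mathbb{X}_2}\Bigl(\sum_{j_1=1}^{N_1}|c_{j_1}(x_2)|^2\Bigr)^{q/2}d\bP_2(x_2)=\|\vec c\|_{L^q(\mathbb{X}_2;\ell^2)}^q,$$
where $\vec c(x_2):=(c_{j_1}(x_2))_{j_1=1}^{N_1}$.

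Next, I would observe that $\vec c$ is itself a Rademacher sum with coefficients in the Banach space $\ell^2$, namely $\vec c=\sum_{j_2}\vec A_{j_2}\,X^2_{j_2}$ with $\vec A_{j_2}:=(A(j_1,j_2))_{j_1=1}^{N_1}\in\ell^2$. Kahane's inequality applied with values in $\ell^2$ then yields $\|\vec c\|_{L^q(\mathbb{X}_2;\ell^2)}\sim_q\|\vec c\|_{L^2(\mathbb{X}_2;\ell^2)}$, and by Fubini together with the orthonormality of the Rademacher system $(X^2_{j_2})_{j_2}$ in $L^2(\mathbb{X}_2)$,
$$\|\vec c\|_{L^2(\mathbb{X}_2;\ell^2)}^2=\sum_{j_1=1}^{N_1}\int_{\mathbb{X}_2}|c_{j_1}(x_2)|^2\,d\bP_2(x_2)=\sum_{j_1=1}^{N_1}\sum_{j_2=1}^{N_2}|A(j_1,j_2)|^2.$$
Chaining these equivalences produces the ``in particular'' statement, and the first equivalence $\|F\|_{L^q}\sim_{q,r}\|F\|_{L^r}$ follows at once by comparing both sides to the Frobenius norm of $A$.

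There is no real obstacle — the argument is a straightforward two-fold iteration of Khintchine. The only mildly delicate step is the exchange of the outer $L^q(\mathbb{X}_2)$ norm with the inner $\ell^2_{j_1}$-summation after the first Khintchine application; scalar Khintchine alone does not suffice for this, but the Kahane inequality for $\ell^2$-valued Rademacher sums handles it cleanly. If one wished to avoid an explicit Banach-valued statement, the same conclusion can be reached by proving the $\ell^2$-valued version by hand for $\ell^2$ (which is easy by duality, since $\ell^2$ is a Hilbert space and Khintchine for $q\geq2$ can be tensorized), but invoking Kahane is more transparent.
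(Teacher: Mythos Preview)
Your argument is correct, but it differs from the paper's in one respect worth noting. After applying scalar Khintchine in the first variable, you invoke Kahane's inequality for $\ell^2$-valued Rademacher sums to handle the remaining $L^q(\mathbb{X}_2;\ell^2)$ norm. The paper instead stays entirely within the scalar theory: assuming $q<r$, it applies scalar Khintchine in $\omega_1$ to pass from the inner $L^r_{\omega_1}$ to $L^q_{\omega_1}$, then uses Minkowski's integral inequality (with exponent $r/q\geq1$) to swap the order of the $\omega_1$- and $\omega_2$-integrals, and finally applies scalar Khintchine once more in $\omega_2$. This yields $\|F\|_{L^r}\lesssim_{q,r}\|F\|_{L^q}$ directly, without ever touching a vector-valued norm; the ``in particular'' then follows by taking $r=2$ and computing the $L^2$ norm by independence. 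Your route proves the ``in particular'' first and deduces the $L^q\sim L^r$ equivalence from it, which is logically equivalent but relies on the (admittedly standard) Kahane machinery. The paper's approach has the advantage of being completely elementary and of making the induction to $t$ parameters transparent (just replace one of the two scalar Khintchine steps by the inductive hypothesis), whereas your approach would require a $t$-parameter iteration of Kahane or an $\ell^2(\ell^2(\cdots))$-valued version.
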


\begin{proof}

Let $0<q,r<\infty$ be arbitrary. Without loss of generality, we may assume $q<r$. Then, by H\ddoto lder's inequality, it suffices only to prove that
\begin{equation*}
\left\Vert\sum_{j_1=1}^{N_1}\sum_{j_2=1}^{N_2}A(j_1,j_2)X^1_{j_1}\otimes X^2_{j_2}\right\Vert\ci{L^r(\mathbb{X}_1\times\mathbb{X}_2)}\lesssim_{q,r}\left\Vert\sum_{j_1=1}^{N_1}\sum_{j_2=1}^{N_2}A(j_1,j_2)X^1_{j_1}\otimes X^2_{j_2}\right\Vert\ci{L^q(\mathbb{X}_1\times\mathbb{X}_2)}.
\end{equation*}
Set
\begin{equation*}
Y_{j_1}:=\sum_{j_2=1}^{N_2}A(j_1,j_2)X^2_{j_2},\qquad j_1=1,\ldots,N_1,
\end{equation*}
\begin{equation*}
Z_{j_2}:=\sum_{j_1=1}^{N_1}A(j_1,j_2)X^1_{j_1},\qquad j_2=1,\ldots,N_2.
\end{equation*}
Then, using first Khintchine's inequalities, then Minkowski's inequality (in view of the fact that $r/q\geq 1$), and finally again Khintchine's inequalities, we get
\begin{align*}
&\left\Vert\sum_{j_1=1}^{N_1}\sum_{j_2=1}^{N_2}A(j_1,j_2)X^1_{j_1}\otimes X^2_{j_2}\right\Vert\ci{L^r(\mathbb{X}_1\times\mathbb{X}_2)}^{r}=\int_{\mathbb{X}_2}\left(\int_{\mathbb{X}_1}\left|\sum_{j_1=1}^{N_1}Y_{j_1}(\omega_2)X_{j_1}^1(\omega_1)\right|^{r}d\bP_1(\omega_1)\right)d\bP_2(\omega_2)\\
&\sim_{q,r}\int_{\mathbb{X}_2}\left(\int_{\mathbb{X}_1}\left|\sum_{j_1=1}^{N_1}Y_{j_1}(\omega_2)X_{j_1}^1(\omega_1)\right|^{q}d\bP_1(\omega_1)\right)^{r/q}d\bP_2(\omega_2)\\
&\leq\left(\int_{\mathbb{X}_1}\left(\int_{\mathbb{X}_2}\left|\sum_{j_1=1}^{N_1}Y_{j_1}(\omega_2)X_{j_1}^1(\omega_1)\right|^{q\cdot\frac{r}{q}}d\bP_2(\omega_2)\right)^{q/r}d\bP_1(\omega_1)\right)^{r/q}\\
&=\left(\int_{\mathbb{X}_1}\left(\int_{\mathbb{X}_2}\left|\sum_{j_2=1}^{N_2}Z_{j_2}(\omega_1)X_{j_2}^2(\omega_2)\right|^{r}d\bP_2(\omega_2)\right)^{q/r}d\bP_1(\omega_1)\right)^{r/q}\\
&\sim_{q,r}\left(\int_{\mathbb{X}_1}\left(\int_{\mathbb{X}_2}\left|\sum_{j_2=1}^{N_2}Z_{j_2}(\omega_1)X_{j_2}^2(\omega_2)\right|^{q}d\bP_2(\omega_2)\right)d\bP_1(\omega_1)\right)^{r/q}\\
&=\left\Vert\sum_{j_1=1}^{N_1}\sum_{j_2=1}^{N_2}A(j_1,j_2)X^1_{j_1}\otimes X^2_{j_2}\right\Vert\ci{L^q(\mathbb{X}_1\times\mathbb{X}_2)}^{r},
\end{align*}
concluding the proof.

The second claim follows immediately from the first by just noting that an iteration of independence gives
\begin{equation*}
\left\Vert\sum_{j_1=1}^{N_1}\sum_{j_2=1}^{N_2}A(j_1,j_2)X^1_{j_1}\otimes X^2_{j_2}\right\Vert\ci{L^2(\mathbb{X}_1\times\mathbb{X}_2)}=\left(\sum_{j_1=1}^{N_1}\sum_{j_2=1}^{N_2}|A(j_1,j_2)|^2\right)^{1/2}.
\end{equation*}
\end{proof}

Clearly, one can use induction to prove similarly a multiparameter version of Khintchine's inequalities, for any $0<q,r<\infty$, in any number of parameters (one has just to replace any of the two uses of Khintchine's inequalities in the proof above by use of the inductive hypothesis).

\begin{proof}[Proof (of Lemma \ref{l: symparaprod-Haarmult-p})]
We first consider the direction $\gtrsim$. It is well-known that Haar multipliers on $\R$ are bounded from $L^p(w)$ into $L^p(w)$ for dyadic $A_p$ weights $w$ on $\R$, within constants depending only on $[w]\ci{A_p,\cD}$ (and not the sign choice in the definition of the Haar multiplier). It follows immediately that for any biparameter dyadic $A_p$ weight $w$ on $\R\times\R$ there holds
\begin{equation*}
\Vert T_{\sigma}^i\Vert\ci{L^p(w)\rightarrow L^p(w)}\lesssim\ci{[w]\ci{A_p,\bfD}}1,\qquad\forall i=1,2,\qquad\forall \sigma\in\Sigma,
\end{equation*}
therefore
\begin{align*}
&\Vert[T_{\sigma_1}^{1},[T_{\sigma_2}^{2},b]]\Vert\ci{L^p(\mu)\rightarrow L^p(\lambda)}=\Vert [T_{\sigma_1}^{1},[T_{\sigma_2}^{2},\Lambda_{b}]]\Vert\ci{L^p(\mu)\rightarrow L^p(\lambda)}\\
&=
\Vert T_{\sigma_1}^{1}T_{\sigma_2}^{2}\Lambda_{b}-T_{\sigma_1}^{1}\Lambda_{b}T_{\sigma_2}^{2}-T_{\sigma_2}^{2}\Lambda_{b}T_{\sigma_1}^{1}+\Lambda_{b}T_{\sigma_2}^{2}T_{\sigma_1}^{1}\Vert\ci{L^p(\mu)\rightarrow L^p(\lambda)}\lesssim 4\Vert \Lambda_{b}\Vert\ci{L^p(\mu)\rightarrow L^p(\lambda)},
\end{align*}
for all $\sigma_1,\sigma_2\in\Sigma$.

We now turn to the other direction. Let $(\cF_n)^{\infty}_{n=1}$ be an increasing sequence of subsets of $\cD$ exhausting $\cD$. For each $n=1,2,\ldots$, let $\Sigma_n$ be the set of all maps $\sigma:\cD\rightarrow\lbrace-1,0,1\rbrace$ that vanish outside of $\cF_n$ and that take values only $-1,1$ on $\cF_n$, and consider the natural probability measure $\bP_n$ on $\Sigma_n$ that to each coordinate $I\in\cF_n$ assigns each of the values $1$ and $-1$ with probability $1/2$, independently of all the other coordinates. Clearly, it suffices to prove that
\begin{equation}
\label{main goal-p}
\sup_{n=1,2,\ldots}\int_{\Sigma_n\times\Sigma_n}\Vert [T_{\sigma_1}^1,[T_{\sigma_2}^2,b]](f)\Vert\ci{L^p(\lambda)}^pd(\bP_n\otimes\bP_n)(\sigma_1,\sigma_{2})\gtrsim\Vert \Lambda_{b}(f)\Vert\ci{L^p(\lambda)}^p,
\end{equation}
for all (suitable) functions $f$ on $\R^2$. For brevity we set $\cC\ci{I\times J}:=[Q^1\ci{I},[Q^2\ci{J},b]]$. Applying Lemma \ref{l: biparam_Khintchine}, we have
\begin{align*}
&\sup_{n=1,2,\ldots}\int_{\Sigma_n\times\Sigma_n}\Vert [T_{\sigma_1}^1,[T_{\sigma_2}^2,b]](f)\Vert\ci{L^p(\lambda)}^pd(\bP_n\otimes\bP_n)(\sigma_1,\sigma_{2})\\
&=\sup_{n=1,2,\ldots}\int_{\Sigma_n\times\Sigma_n}\left\Vert\sum_{I\times J\in\bfD}\sigma_1(I)\sigma_2(J)\cC\ci{I\times J}(f)\right\Vert\ci{L^p(\lambda)}^pd(\bP_n\otimes\bP_n)(\sigma_1,\sigma_2)\\
&=\sup_{n=1,2,\ldots}\int_{\R\times\R}\left(\int_{\Sigma_n\times\Sigma_n}\left|\sum_{I\times J\in\bfD}\sigma_1(I)\sigma_2(J)\cC\ci{I\times J}(f)(x)\right|^pd(\bP_n\otimes\bP_n)(\sigma_1,\sigma_2)\right)\lambda(x)dx\\
&\sim_{p}\sup_{n=1,2,\ldots}\int_{\R\times\R}\left(\sum_{I,J\in\cF_n}|\cC\ci{I\times J}(f)(x)|^2\right)^{p/2}\lambda(x)dx=\int_{\R\times\R}\left(\sum_{I\times J\in\bfD}|\cC\ci{I\times J}(f)(x)|^2\right)^{p/2}\lambda(x)dx,
\end{align*}
where in the last equality we applied the Monotone Convergence Theorem. 
Observe that
\begin{equation*}
|Q\ci{R}g|\leq\La |g|\Ra\ci{R}\1\ci{R}\leq M\ci{\bfD}g,\qquad\forall R\in\bfD.
\end{equation*}
Moreover, O. N. Capri and C. Guti\'errez \cite{capri-gutierrez} establish the following one-weight vector-valued estimate for the dyadic strong maximal function (in any number of parameters):
\begin{equation*}
\left\Vert\left(\sum_{n=1}^{\infty}|M\ci{\bfD}g_n|^2\right)^{1/2}\right\Vert\ci{L^{p}(w)}\lesssim \left\Vert\left(\sum_{n=1}^{\infty}|g_n|^2\right)^{1/2}\right\Vert\ci{L^{p}(w)},
\end{equation*}
where the implied constants depend only on $p$ and $[w]\ci{A_p,\bfD}$ (their proof is for the case of the strong maximal function $M\ti{S}$ and multiparameter $A_p$ weights $w$, but it works without any changes for the case of the dyadic strong maximal function $M\ci{\bfD}$ and dyadic multiparameter $A_p$ weights $w$). Thus, we have
\begin{align*}
&\int_{\R\times\R}\left(\sum_{I\times J\in\bfD}|\cC\ci{I\times J}(f)(x)|^2\right)^{p/2}\lambda(x)dx\gtrsim\int_{\R\times\R}\left(\sum_{I\times J\in\bfD}|M\ci{\bfD}(\cC\ci{I\times J}(f))(x)|^2\right)^{p/2}\lambda(x)dx\\
&\geq \int_{\R\times\R}\left(\sum_{I\times J\in\bfD}|Q\ci{I}^1Q\ci{J}^2(\cC\ci{I\times J}(f))(x)|^2\right)^{p/2}\lambda(x)dx\\
&=\int_{\R\times\R}\left(\sum_{I\times J\in\bfD}|Q\ci{I}^1Q\ci{J}^2([Q^1\ci{I},[Q^2\ci{J},\Lambda_b]](f))(x)|^2\right)^{p/2}\lambda(x)dx,
\end{align*}
where in the last equality we have used \eqref{replace b by Lb-projections}. Notice that using \eqref{paraprod-proj} we obtain
\begin{align*}
&Q\ci{I}^1Q\ci{J}^2([Q^1\ci{I},[Q^2\ci{J},\Lambda_b]](f))\\
&=Q^1\ci{I}Q^2\ci{J}(Q^1\ci{I}Q^2\ci{J}\Lambda_{b}(f)-Q^1\ci{I}\Lambda_{b}Q\ci{J}^2(f)-Q^2\ci{J}\Lambda_{b}Q^1\ci{I}(f)+\Lambda_{b}Q\ci{I}^{1}Q\ci{J}^2(f))\\
&=Q^1\ci{I}Q^2\ci{J}\Lambda_{b}(f),
\end{align*}
for all $I,J\in\cD$. It follows that
\begin{align*}
&\int_{\R\times\R}\left(\sum_{I\times J\in\bfD}|Q\ci{I}^1Q\ci{J}^2([Q^1\ci{I},[Q^2\ci{J},\Lambda_b]](f))(x)|^2\right)^{p/2}\lambda(x)dx\\
&= \int_{\R\times\R}\left(\sum_{I\times J\in\bfD}|Q^1\ci{I}Q^2\ci{J}\Lambda_{b}(f)(x)|^2\right)^{p/2}\lambda(x)dx\sim \Vert \Lambda_{b}(f)\Vert\ci{L^p(\lambda)}^p,
\end{align*}
concluding the proof.
\end{proof}

\subsection{Bounds for the ``symmetrized" paraproduct and conclusion of the proof} In this section we complete the proof of Theorem \ref{t: HaarmultBMO}.
Blasco--Pott \cite{blasco-pott} show that
\begin{equation*}
P\ci{\Omega}(b) = P\ci{\Omega}(\Lambda_{b}(\1\ci{\Omega})).
\end{equation*}
This can be readily checked by direct computation using the definition of the operator $\Lambda\ci{b}$ and how paraproducts act on characteristic functions.
From this, it follows that
\begin{align*}
\Vert P\ci{\Omega}(b)\Vert\ci{L^p(\lambda)}=\Vert P\ci{\Omega}(\Lambda_{b}(\1\ci{\Omega}))\Vert\ci{L^p(\lambda)}\lesssim \Vert \Lambda_{b}(\1\ci{\Omega})\Vert\ci{L^p(\lambda)}\leq \Vert\Lambda_{b}\Vert\ci{L^p(\mu)\rightarrow L^p(\lambda)}(\mu(\Omega))^{1/p}.
\end{align*}
In the $\lesssim$ we used the weighted Littlewood--Paley estimates.
The analogous expressions are also valid for the multiparameter operators $\boldsymbol{P}\ci{\Om}$ and $\boldsymbol{\Lambda}\ci{b}$ and measurable sets $\Om \subseteq \R^{\vec{d}},$ and their proofs use the same idea of checking the action of the various paraproducts on characteristic functions.
It follows that
\begin{equation*}
\Vert b\Vert\ci{\text{BMO}\ci{\text{prod},\bfD}(\mu,\lambda,p)}\lesssim\Vert\Lambda_{b}\Vert\ci{L^p(\mu)\rightarrow L^p(\lambda)}.
\end{equation*}
Combining this with Theorem \ref{t: equivBMO} and Lemma \ref{l: symparaprod-Haarmult-p} we deduce
\begin{equation}
\label{upper_bound}
\Vert b\Vert\ci{\text{BMO}\ci{\text{prod},\bfD}(\nu)}\lesssim\sup_{\sigma_1,\sigma_2\in\Sigma}\Vert[T_{\sigma_1}^{1},[T_{\sigma_2}^{2},b]]\Vert\ci{L^{p}(\mu)\rightarrow L^{p}(\lambda)}.
\end{equation}
Moreover, Holmes--Petermichl--Wick \cite{holmes-petermichl-wick} prove that
\begin{equation*}
\Vert P_{b}\Vert\ci{L^p(\mu)\rightarrow L^p(\lambda)}\lesssim\Vert b\Vert\ci{\text{BMO}\ci{\text{prod},\bfD}(\nu)},
\end{equation*}
where $P_b$ is any of the four paraproducts $\Pi_b^{(\e_1\e_2)}$, $\e_1,\e_2\in\lbrace0,1\rbrace$ (the same fact for all relevant multiparameter paraproducts is shown by Airta in \cite{airta}). It follows that
\begin{equation}
\label{symparaprod-BMO-p}
\Vert\Lambda_{b}\Vert\ci{L^p(\mu)\rightarrow L^p(\lambda)}\lesssim\Vert b\Vert\ci{\text{BMO}\ci{\text{prod},\bfD}(\nu)}.
\end{equation}
Thus, combining Lemma \ref{l: symparaprod-Haarmult-p}, \eqref{upper_bound} and \eqref{symparaprod-BMO-p} we deduce
\begin{equation*}
\Vert b\Vert\ci{\text{BMO}\ci{\text{prod},\bfD}(\nu)}\sim\sup_{\sigma_1,\sigma_2\in\Sigma}\Vert[T_{\sigma_1}^{1},[T_{\sigma_2}^{2},b]]\Vert\ci{L^{p}(\mu)\rightarrow L^{p}(\lambda)},
\end{equation*}
concluding the proof.

Note that since $\nu$ is a dyadic biparameter $A_2$ weight on $\R^2$, $\nu=\nu^{1/2}(\nu^{-1})^{-1/2}$ and $[\nu]\ci{A_2,\bfD}=[\nu^{-1}]\ci{A_2,\bfD}$, we also get
\begin{equation*}
\Vert b\Vert\ci{\text{BMO}\ci{\text{prod},\bfD}(\nu)}\sim\sup_{\sigma_1,\sigma_2\in\Sigma}\Vert [T_{\sigma_1}^1,[T_{\sigma_2}^2,b]]\Vert\ci{L^2(\nu)\rightarrow L^2(\nu^{-1})},
\end{equation*}
where all implied constants depend only on $[\nu]\ci{A_2,\bfD}$.

\subsection{General multiparameter result}
The ideas presented in this section can also be applied, with only minor modifications, to any multiparameter setting. That is, Theorem \ref{t: HaarmultBMO} can be stated and proved for iterated commutators on functions defined on $\R^{\vec{d}} := \R^{d_1}\times\dots\times\R^{d_t},$ $\vec{d}:=(d_1,\ldots,d_t).$
We have already been commenting along the proof which steps have to be modified in this context.
Abusing slightly the notation, here we use $\bfD$ to denote the set of dyadic rectangles in the product space $\R^{\vec{d}}.$
The statement of the result in full generality is the following.

\begin{thm}
\label{t:GeneralHaarmultBMO}
Let $1 < p < \infty.$ Consider a function $b \in L^1\ti{loc}(\R^{\vec{d}}),$ dyadic multiparameter $A_p$ weights $\mu,$ $\lambda$ on $\R^{\vec{d}}$ and define $\nu:=\mu^{1/p}\lambda^{-1/p}$.
Then
\begin{equation*}
\sup_{\sigma_1,\ldots,\sigma_t\in\Sigma}\Vert[T_{\sigma_1}^1,[\ldots[T_{\sigma_t}^t,b]\ldots]]\Vert\ci{L^p(\mu)\rightarrow L^p(\lambda)}\sim\Vert b\Vert\ci{\emph{BMO}\ci{\emph{prod},\bfD}(\nu)},
\end{equation*}
where the implied constants depend only on $\vec{d},p,$ $[\mu]\ci{A_p,\bfD}$ and $[\lambda]\ci{A_p,\bfD}$.
\end{thm}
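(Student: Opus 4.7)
The plan is to follow the scheme established in the biparameter case, extending each step to $t$ parameters; the combinatorial bookkeeping is more involved, but the analytic inputs are all available in arbitrary number of parameters. First, I would define the $t$-parameter symmetrized paraproduct
\[
\boldsymbol{\Lambda}_{b} := \sum_{\vec{\e}\in\{0,1\}^t} \Pi_{b}^{\vec{\e}},
\]
where each $\Pi_{b}^{\vec{\e}}$ is the obvious mixed paraproduct in which the argument is integrated against $h^{(\e_i)}$ in the $i$-th variable. Direct expansion in the Haar basis, together with the multiparameter inclusion--exclusion formula for $\boldsymbol{P}\ci{R}b$ already indicated, yields $\boldsymbol{\Lambda}_{b}f = \sum_{R\in\bfD}(\boldsymbol{P}\ci{R}b)\,f\ci{R}\,h\ci{R}$, from which two algebraic consequences follow by direct computation: the telescoping identity
\[
[Q^1\ci{I_1},[\ldots,[Q^t\ci{I_t},b]\ldots]] = [Q^1\ci{I_1},[\ldots,[Q^t\ci{I_t},\boldsymbol{\Lambda}_b]\ldots]],
\]
and the partial orthogonality $Q^i\ci{I_i}\boldsymbol{\Lambda}_{b}Q^i\ci{I_i}=0$ for every $i$ and every $I_i\in\cD(\R^{d_i})$. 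Both facts are multiparameter analogues of \eqref{replace b by Lb-projections} and \eqref{paraprod-proj}.

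Next I would prove the $t$-parameter analogue of Lemma \ref{l: symparaprod-Haarmult-p}, namely
\[
\sup_{\sigma_1,\ldots,\sigma_t\in\Sigma}\Vert [T_{\sigma_1}^{1},[\ldots,[T_{\sigma_t}^{t},b]\ldots]]\Vert\ci{L^{p}(\mu)\rightarrow L^{p}(\lambda)} \sim \Vert\boldsymbol{\Lambda}_{b}\Vert\ci{L^{p}(\mu)\rightarrow L^{p}(\lambda)}.
\]
The $\lesssim$ direction follows as in the biparameter case by expanding the nested commutator into $2^t$ terms and using the uniform $L^p(w)$-boundedness of Haar multipliers for dyadic multiparameter $A_p$ weights $w$. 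For the $\gtrsim$ direction, I would average over all sign choices and apply the $t$-parameter Khintchine inequality (obtained by straightforward induction from Lemma \ref{l: biparam_Khintchine}, as already indicated in the paper) to replace $\int|[T_{\sigma_1}^{1},[\ldots,[T_{\sigma_t}^{t},b]\ldots]]f|^{p}\lambda$ by the integral of $(\sum_{R\in\bfD}|[Q^1\ci{I_1},[\ldots,[Q^t\ci{I_t},b]\ldots]]f|^2)^{p/2}$ against $\lambda$. I would then bound each summand from below by $|Q^1\ci{I_1}\cdots Q^t\ci{I_t}[Q^1\ci{I_1},[\ldots,[Q^t\ci{I_t},b]\ldots]]f|^2$ using the vector-valued Capri--Guti\'errez estimate for the $t$-parameter dyadic strong maximal function (whose proof works in any number of parameters), collapse this into $|Q^1\ci{I_1}\cdots Q^t\ci{I_t}\boldsymbol{\Lambda}_{b}f|^2$ via the two identities above, and finish with the $t$-parameter weighted Littlewood--Paley estimate to recover $\Vert\boldsymbol{\Lambda}_{b}f\Vert\ci{L^p(\lambda)}^p$.

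Finally, to relate $\Vert\boldsymbol{\Lambda}_{b}\Vert\ci{L^{p}(\mu)\rightarrow L^{p}(\lambda)}$ to $\Vert b\Vert\ci{\text{BMO}\ci{\text{prod},\bfD}(\nu)}$, I would invoke Airta's \cite{airta} $L^p(\mu)\to L^p(\lambda)$ bounds for each of the $2^t$ multiparameter paraproducts with Bloom product BMO symbol for the upper bound; for the lower bound, I would verify by direct computation that $\boldsymbol{P}\ci{\Omega}b = \boldsymbol{P}\ci{\Omega}(\boldsymbol{\Lambda}_{b}(\1\ci{\Omega}))$ for every measurable $\Omega\subseteq\R^{\vec{d}}$ of finite measure, which, combined with the weighted Littlewood--Paley estimates, gives
\[
\Vert b\Vert\ci{\text{BMO}\ci{\text{prod},\bfD}(\mu,\lambda,p)} \lesssim \Vert\boldsymbol{\Lambda}_{b}\Vert\ci{L^{p}(\mu)\rightarrow L^{p}(\lambda)};
\]
the multiparameter version of Theorem \ref{t: equivBMO} (whose proof carries over verbatim, as noted at the start of Section \ref{sec:WeightedEquivalences}) then closes the loop.

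The main obstacle is the combinatorial bookkeeping at the very first step: with $2^t$ paraproducts of distinct cancellation patterns and $2^t$ terms in the $t$-fold commutator, one must carefully track the inclusion--exclusion type cancellations to verify both the identity $\boldsymbol{\Lambda}_{b}f = \sum_{R}(\boldsymbol{P}\ci{R}b)f\ci{R}h\ci{R}$ and the two algebraic consequences that make the Khintchine-plus-maximal-function argument collapse onto $\boldsymbol{\Lambda}_{b}$. Once these are in place, everything else is a literal transcription of the biparameter proof.
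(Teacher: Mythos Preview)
Your proposal is correct and follows essentially the same approach as the paper. The paper does not give a separate detailed proof of this theorem; it simply states that the biparameter argument carries over with the minor multiparameter modifications already flagged throughout Section~\ref{sec:HaarMultipliers} (the multiparameter $\boldsymbol{\Lambda}_b$ and $\boldsymbol{P}\ci{R}$, the inductive extension of Khintchine, the Capri--Guti\'errez estimate in any number of parameters, Airta's paraproduct bounds, and the multiparameter version of Theorem~\ref{t: equivBMO}), all of which you have correctly identified and organized.
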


\section{Bounds for general commutators of Haar multipliers}
\label{sec:IndexedSpaces}

In this section, we show bounds analogous to those of Section \ref{sec:HaarMultipliers} for iterated commutators of general martingale transforms, not necessarily of tensor type.
As usual, consider dyadic multiparameter $A_p$  weights $\mu$ and $\lambda$ on $\R^{d}$, with $1 < p < \infty,$ and let $\nu := \mu^{1/p} \lambda^{-1/p}.$
We define the \emph{dyadic two-weight little BMO p-norm} $\Vert b \Vert\ci{\text{bmo}\ci{\bfD}(\mu,\lambda,p)}$ by
\begin{equation*}
  \Vert b \Vert\ci{\text{bmo}\ci{\bfD}(\mu,\lambda,p)} :=
  \sup_{R \in \bfD} \frac{1}{(\mu(R))^{1/p}} \Vert (b-\La b \Ra\ci{R}) \1\ci{R} \Vert\ci{L^p(\lambda)}.
\end{equation*}
We also say, for a dyadic multiparameter $A_2$ weight $w$ on $\R^d$, that a function $b$ is in the \emph{dyadic one-weight little BMO} space if
\begin{equation*}
  \sup_{R \in \bfD} \frac{1}{w(R)} \Vert (b-\La b \Ra\ci{R}) \1\ci{R} \Vert\ci{L^1(\R^d)} < \infty.
\end{equation*}
In this case, we assign to function $b$ the norm
\begin{equation*}
  \Vert b \Vert\ci{\text{bmo}\ci{\bfD}(w)} := \Vert b \Vert\ci{\text{bmo}\ci{\bfD}(w,w^{-1},2)},
\end{equation*}
which is equivalent to the previous supremum.
This equivalence is shown by Holmes--Petermichl--Wick \cite{holmes-petermichl-wick} in the biparameter case using an iteration of the one-parameter argument due to Holmes--Lacey--Wick \cite{holmes-lacey-wick}, but the same argument can be iterated to any number of parameters.
Furthermore, the authors of \cite{holmes-petermichl-wick} also prove that, for $\mu,$ $\lambda$ and $\nu$ as before, there holds
\begin{equation*}
  \Vert b \Vert\ci{\text{bmo}\ci{\bfD}(\mu,\lambda,p)} \sim \Vert b \Vert\ci{\text{bmo}\ci{\bfD}(\lambda',\mu',p')} \sim \Vert b \Vert\ci{\text{bmo}\ci{\bfD}(\nu)}
\end{equation*}
(it is actually shown there for the continuous biparameter setting, but their argument holds equally well in the dyadic case and can be iterated to any number of parameters as well).

Recall that if $x \in \R^{\vec{d}}$ and $k \in \lbrace 1, 2, \ldots, t\rbrace,$ we denote $x_{\overline{k}} := (x_1,\ldots,x_{k-1},\cdot,x_{k+1},\ldots,x_t),$ so that a function $f(x_{\overline{k}})$ depends only on the variable $x_k.$
Similarly, we also denote $\R^{\vec{d}_{\overline{k}}} := \R^{d_1} \times \dots \times \R^{d_{k-1}} \times \R^{d_{k+1}} \times \dots \times \R^{d_{t}}.$
In general, we extend this notation to any number of parameters, so that if $k_1,\ldots,k_s \in \lbrace 1, 2, \ldots, t\rbrace,$ then we take $x_{\overline{k_1,\ldots,k_s}} := (x_1,\ldots,x_{k_1-1},\cdot,x_{k_1+1},\ldots,x_{k_s-1},\cdot,x_{k_s+1},\ldots,x_t),$ and similarly for $\R^{\vec{d}_{\overline{k_1,\ldots,k_s}}}.$
Using this notation, Holmes--Petermichl--Wick \cite{holmes-petermichl-wick} show that
\begin{equation*}
  \Vert b \Vert\ci{\text{bmo}\ci{\bfD}(\nu)} \sim
  \max \bigg\lbrace \esssup_{x_{\overline{k}}\in\R^{\vec{d}_{\overline{k}}}} \Vert b(x_{\overline{k}}) \Vert\ci{\text{BMO}\ci{\bfD}(\nu(x_{\overline{k}}))}\colon k \in \lbrace 1,2,\ldots,t\rbrace\bigg\rbrace,
\end{equation*}
where $\Vert \cdot \Vert\ci{\text{BMO}\ci{\cD}(\nu)}$ denotes the usual dyadic weighted one-parameter BMO norm (again, the result is stated and proved there only in the continuous biparameter setting, but the argument holds as well for the dyadic spaces and in any number of parameters). In other words, a function $b$ is in dyadic weighted little bmo if and only if it is uniformly in dyadic weighted one-parameter BMO in each variable. Moreover, they also observe that
\begin{equation*}
 \text{bmo}\ci{\bfD}(\nu) \subseteq \BMOprodD(\nu).
\end{equation*}

More generally, let $\cI=\lbrace I_1,\ldots,I_l\rbrace$ be any partition of $\lbrace1,\ldots,t\rbrace$. Let $\mu,\lambda$ be $t$-parameter $A_p$ weights on $\R^{\vec{d}}$, $1<p<\infty$. Define
\begin{equation*}
\Vert b\Vert\ci{\text{bmo}\ci{\bfD}^{\cI}(\mu,\lambda,p)}:=
\max_{\imath\in I_{1}\times\dots\times I_{l}}\bigg(\esssup_{x_{\bar{\imath}}\in\R^{\vec{d}_{\bar{\imath}}}}\Vert b(x_{\bar{\imath}})\Vert\ci{\text{BMO}\ci{\text{prod}\ci{\bfD}(\mu(x_{\bar{\imath}}),\lambda(x_{\bar{\imath}}),p)}}\bigg),
\end{equation*}
and as before if $w$ is any $t$-parameter $A_2$ weight on $\R^{\vec{d}}$ then we define
\begin{equation*}
\Vert b\Vert\ci{\text{bmo}\ci{\bfD}^{\cI}(w)}:=\Vert b\Vert\ci{\text{bmo}\ci{\bfD}^{\cI}(w,w^{-1},2)}.
\end{equation*}
If $\cI$ consists only of singletons, then one recovers product BMO in $t$ parameters, while if $\cI$ has just one element, then one recovers little BMO.
In general, for any partition $\cI$ we have that
\begin{equation*}
  \text{bmo}\ci{\bfD}(w) \subseteq \text{bmo}\ci{\bfD}^{\cI}(w) \subseteq \BMOprodD(w).
\end{equation*}
Observe that our results in Section \ref{sec:WeightedEquivalences} immediately imply that if $\nu:=\mu^{1/p}\lambda^{-1/p}$, then
\begin{equation*}
\Vert b\Vert\ci{\text{bmo}^{\cI}\ci{\bfD}(\nu)}\sim\Vert b\Vert\ci{\text{bmo}^{\cI}\ci{\bfD}(\mu,\lambda,p)},
\end{equation*}
where all implied constants depend only on $p$, $[\mu]\ci{A_{p},\bfD}$ and $[\lambda]\ci{A_p,\bfD}$.
Notice that here we are making use of the fact that for any $\imath \in I_1\times \dots \times I_l$, and for almost every $x_{\overline{\imath}}\in\R^{\vec{d}_{\overline{\imath}}}$, the weight $\mu(x_{\overline{\imath}})$ is a dyadic $l$-parameter $A_p$ weight on $\R^{d_{i_1}}\times \dots \times \R^{d_{i_l}}$ with $[\mu(x_{\overline{\imath}})]\ci{A_p,\bfD}\leq[\mu]\ci{A_p,\bfD}$, and similarly for $\lambda$ and $\nu$.

Consider now the set $\bsigma$ of sign choices $\sigma=\lbrace\sigma(R)\rbrace\ci{R\in\bfD}$. For each $\sigma \in \bsigma$ we consider the Haar multiplier $T_\sigma$ defined by
\begin{equation*}
  T_\sigma f \colon= \sum_{R \in \bfD} \sigma(R) f\ci{R} h\ci{R}, \qquad f \in L^2(\R^{\vec{d}}).
\end{equation*}
Observe that here we consider all possible choices of signs over the dyadic rectangles in $\bfD,$ not only those that are of tensor type, as opposed to Section \ref{sec:HaarMultipliers}.
We will study bounds for $\Vert [T_\sigma,b] \Vert\ci{L^p(\mu) \rightarrow L^p(\lambda)}$ in terms of $\Vert b \Vert\ci{\text{bmo}\ci{\bfD}(\nu)}.$ We adapt the arguments presented in Section \ref{sec:HaarMultipliers} to the biparameter case.

\begin{prop}
Let $1 < p < \infty.$
Consider a function $b \in L^1\ti{loc}(\R^2),$ dyadic biparameter $A_p$ weights $\mu,$ $\lambda$ on $\R^2$ and define $\nu:=\mu^{1/p}\lambda^{-1/p}$. Then
\begin{equation*}
\sup_{\sigma \in \bsigma}\Vert[T_{\sigma},b]\Vert\ci{L^p(\mu)\rightarrow L^p(\lambda)} \sim \Vert b\Vert\ci{\emph{bmo}\ci{\bfD}(\nu)},
\end{equation*}
where the implied constants depend only on $p,$ $[\mu]\ci{A_p,\bfD}$ and $[\lambda]\ci{A_p,\bfD}$.
\end{prop}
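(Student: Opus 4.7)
The plan is to prove each inequality of the equivalence separately. The upper bound will follow from a paraproduct decomposition of $[T_\sigma,b]$ together with existing paraproduct bounds, while the lower bound will be obtained by a slicing argument that reduces the problem to the one-parameter Holmes--Lacey--Wick commutator theorem.

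\textbf{Upper bound.} To prove $\sup_{\sigma\in\bsigma}\Vert[T_\sigma,b]\Vert\ci{L^p(\mu)\to L^p(\lambda)}\lesssim\Vert b\Vert\ci{\text{bmo}\ci{\bfD}(\nu)}$, I would rewrite $[T_\sigma,b]f=T_\sigma(bf)-bT_\sigma f$ and expand both $bf$ and $b(T_\sigma f)$ via the standard biparameter paraproduct decomposition of a product of two functions into a finite sum $\sum_{\e_1,\e_2}\Pi\ci{b}^{(\e_1\e_2)}h$ plus diagonal/error pieces. The resulting expression for $[T_\sigma,b]$ becomes a finite combination of operators of the form $T_\sigma\Pi\ci{b}^{(\e_1\e_2)}-\Pi\ci{b}^{(\e_1\e_2)}T_\sigma$ and closely related diagonal terms. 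Each paraproduct satisfies $\Vert\Pi\ci{b}^{(\e_1\e_2)}\Vert\ci{L^p(\mu)\to L^p(\lambda)}\lesssim\Vert b\Vert\ci{\text{BMO}\ci{\text{prod},\bfD}(\nu)}\leq\Vert b\Vert\ci{\text{bmo}\ci{\bfD}(\nu)}$ by the result of Holmes--Petermichl--Wick \cite{holmes-petermichl-wick} already used in Section~\ref{sec:HaarMultipliers}, while $T_\sigma$ is bounded on $L^p(w)$ uniformly in $\sigma\in\bsigma$ for every dyadic biparameter $A_p$ weight $w$: since $|(T_\sigma f)\ci{R}|\leq|f\ci{R}|$ for every $R\in\bfD$, one has $S\ci{\bfD}(T_\sigma f)\leq S\ci{\bfD}f$ pointwise, and the weighted Littlewood--Paley estimates give the claim. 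Combining these two facts yields the upper bound uniformly in $\sigma$.

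\textbf{Lower bound.} The key observation is that $\bsigma$ contains all ``one-variable'' sign choices of the form $\sigma(I\times J):=\sigma_1(I)$ with $\sigma_1\in\Sigma$; for such $\sigma$, the operator $T_\sigma$ coincides with $T_{\sigma_1}^{1}$ from Section~\ref{sec:HaarMultipliers} and acts only on the first variable, so that $[T_\sigma,b]f(t,s)=[T_{\sigma_1},b(\fdot,s)](f(\fdot,s))(t)$ for a.e.\ $(t,s)\in\R^2$. Testing with product functions $f(t,s):=g(t)\1\ci{J_\delta}(s)$, where $g$ ranges over a countable dense family of simple functions on $\R$ (say finite $\mathbb{Q}$-linear combinations of characteristic functions of dyadic intervals) and $J_\delta\ni s_0$ is a shrinking dyadic interval, gives
\begin{equation*}
\frac{\Vert[T_\sigma,b]f\Vert\ci{L^p(\lambda)}^{p}}{\Vert f\Vert\ci{L^p(\mu)}^{p}}=\frac{\int\ci{J_\delta}\Vert[T_{\sigma_1},b(\fdot,s)]g\Vert\ci{L^p(\lambda(\fdot,s))}^{p}\,ds}{\int\ci{J_\delta}\Vert g\Vert\ci{L^p(\mu(\fdot,s))}^{p}\,ds}.
\end{equation*}
Both integrands are locally integrable in $s$ (using $b\in L^{1}\ti{loc}(\R^2)$, $g\in L^{\infty}\ti{c}(\R)$, the finite support of $\sigma_1$, and local integrability of the weights), so Lebesgue differentiation -- applied simultaneously across the countable family of $g$'s and the countable set $\Sigma$ of $\sigma_1$'s, by intersecting exceptional null sets -- yields, for a.e.\ $s_0\in\R$ and every $\sigma_1\in\Sigma$,
\begin{equation*}
\Vert[T_{\sigma_1},b(\fdot,s_0)]\Vert\ci{L^p(\mu(\fdot,s_0))\to L^p(\lambda(\fdot,s_0))}\leq \sup_{\sigma\in\bsigma}\Vert[T_\sigma,b]\Vert\ci{L^p(\mu)\to L^p(\lambda)}.
\end{equation*}
The one-parameter Holmes--Lacey--Wick two-weight commutator theorem~\cite{holmes-lacey-wick}, applied to $b(\fdot,s_0)$ with weights $\mu(\fdot,s_0)$ and $\lambda(\fdot,s_0)$ (whose one-parameter $A_p$ characteristics are controlled by $[\mu]\ci{A_p,\bfD}$ and $[\lambda]\ci{A_p,\bfD}$), then gives $\Vert b(\fdot,s_0)\Vert\ci{\BMO\ci{\cD}(\nu(\fdot,s_0))}\lesssim\sup_{\sigma\in\bsigma}\Vert[T_\sigma,b]\Vert\ci{L^p(\mu)\to L^p(\lambda)}$ for a.e.\ $s_0$. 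Running the symmetric argument with sign choices $\sigma(I\times J):=\sigma_2(J)$, and combining with the characterization of $\Vert b\Vert\ci{\text{bmo}\ci{\bfD}(\nu)}$ as the maximum over both variables of the essential supremum of one-parameter dyadic weighted BMO norms of slices (recalled in this section from \cite{holmes-petermichl-wick}), yields the claim.

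The principal technical step is the simultaneous Lebesgue-differentiation/concentration argument over countably many pairs $(g,\sigma_1)$; the hard part is certifying that a single full-measure set of ``good'' $s_0$'s works for every such pair at once. This follows by verifying local integrability (in $s$) of each of the quantities $\Vert[T_{\sigma_1},b(\fdot,s)]g\Vert\ci{L^p(\lambda(\fdot,s))}^{p}$ and $\Vert g\Vert\ci{L^p(\mu(\fdot,s))}^{p}$ separately and intersecting the exceptional sets from each application of Lebesgue's theorem, which is possible precisely because the index set of pairs is countable.
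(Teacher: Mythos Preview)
Your upper bound follows the same route as the paper's (the paper simply cites Airta~\cite{airta}, whose proof is precisely the paraproduct decomposition you sketch). The lower bound, however, takes a genuinely different path.

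The paper's lower bound is intrinsic to the biparameter setting: it introduces the operator
\[
\Theta_b f=\sum_{R\in\bfD}(b-\La b\Ra\ci{R})\1\ci{R}\,f\ci{R}h\ci{R},
\]
observes that $[Q\ci{R},b]=[Q\ci{R},\Theta_b]$ and $Q\ci{R}\Theta_b Q\ci{R}=0$, averages $\Vert[T_\sigma,b]f\Vert\ci{L^p(\lambda)}^p$ over \emph{all} biparameter sign choices via (one-parameter) Khintchine, passes through the vector-valued strong maximal function to reach $\Vert\Theta_b f\Vert\ci{L^p(\lambda)}$, and finally tests $\Theta_b$ on single Haar functions $h\ci{R}$ to read off $\Vert b\Vert\ci{\text{bmo}\ci{\bfD}(\mu,\lambda,p)}$ directly. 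Your argument instead restricts to tensor-type sign choices, slices in the second variable, and invokes the one-parameter Haar-multiplier commutator bound as a black box. Your route is conceptually economical (it recycles the one-parameter theory), while the paper's route is self-contained and parallels the product-BMO argument of Section~\ref{sec:HaarMultipliers}; the paper's approach also gives the little-bmo norm in its two-weight $(\mu,\lambda,p)$ form without invoking the one-parameter equivalences.

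One technical point you should address: your ``one-variable'' sign choice $\sigma(I\times J):=\sigma_1(I)$ is \emph{not} finitely supported on $\bfD$ (since $J$ ranges over all of $\cD$), so it does not literally belong to $\bsigma$ as defined in the paper. You need a truncation $\sigma(I\times J):=\sigma_1(I)\1\ci{\cF}(J)$ for finite $\cF\subset\cD$, together with a limiting argument $\cF\uparrow\cD$ to recover $T_{\sigma_1}^1$; this is routine but should be stated, since the commutator involves $T_\sigma(bf)$ and $bf$ has infinitely many Haar coefficients in the $J$-direction. Also, the one-parameter lower bound you need is $\sup_{\sigma_1\in\Sigma}\Vert[T_{\sigma_1},b]\Vert\ci{L^p(\mu)\to L^p(\lambda)}\gtrsim\Vert b\Vert\ci{\text{BMO}\ci{\cD}(\nu)}$ for dyadic Haar multipliers; this is the $t=1$ case of the paper's Theorem~\ref{t:GeneralHaarmultBMO} (or can be extracted from the paraproduct estimates in \cite{holmes-lacey-wick}), rather than the Hilbert/Riesz-transform statement you cite.
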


\begin{proof}
Airta \cite[Theorem~4.12]{airta} shows general upper bounds for iterated commutators of multi-parameter Haar shifts in terms of the symbol norm in the appropriate indexed BMO space.
In particular, for a single commutator, this includes the upper bound $\Vert[T_{\sigma},b]\Vert\ci{L^p(\mu)\rightarrow L^p(\lambda)} \lesssim \Vert b\Vert\ci{\text{bmo}\ci{\bfD}(\nu)}.$
Thus, we only need to show the corresponding lower bound.

Define the operator $\Theta_b$ by
\begin{equation*}
  \Theta_b f = \sum_{R \in \bfD} (\omega\ci{R} b) f\ci{R} h\ci{R}, \qquad f \in L^2(\R^2),
\end{equation*}
where $\omega\ci{R} b := (b(x) - \La b \Ra\ci{R}) \1\ci{R}(x).$
This operator satisfies the relations
\begin{equation*}
  [Q\ci{R},b] = [Q\ci{R},\Theta_b], \qquad \forall R \in \bfD,
\end{equation*}
and
\begin{equation*}
  Q\ci{R} \Theta_b Q\ci{R} = 0, \qquad \forall R \in \bfD,
\end{equation*}
where $Q_R$ denotes the orthogonal projection from $L^2(\R^2)$ onto the one-dimensional space spanned by $h\ci{R}.$
Observe that a simple computation using the definition of $\Theta_b$ shows that they hold for any Haar function $h\ci{R},$ from which the general result follows by a density argument.

Now we see that
\begin{equation}
  \label{eq:ThetaNormEquiv}
  \sup_{\sigma \in \bsigma} \Vert [T_\sigma,b] \Vert\ci{L^p(\mu) \rightarrow L^p(\lambda)}
  \gtrsim \Vert \Theta_b \Vert\ci{L^p(\mu) \rightarrow L^p(\lambda)}.
\end{equation}
As before, the lower bound \eqref{eq:ThetaNormEquiv} will follow from that of the average of the commutator norms over $\bsigma.$
In this case we have
\begin{align*}
&\int_{\bsigma}\Vert [T_{\sigma},b](f)\Vert\ci{L^p(\lambda)}^p\, d\bP(\sigma)\\
&=\int_{\R^2}\left(\int_{\bsigma}\left|\sum_{R \in \bfD} \sigma(R)[Q\ci{R},b](f)(x)\right|^p\, d\bP(\sigma)\right)\lambda(x)dx\\
&\sim\int_{\R^2}\left(\int_{\bsigma}\left|\sum_{R \in \bfD}\sigma(R)[Q\ci{R},b](f)(x)\right|^2\, d\bP(\sigma)\right)^{p/2}\lambda(x)dx,
\end{align*}
where we have used Khintchine's inequalities in the last step.
This last quantity is equal to
\begin{align*}
&=\int_{\R^2}\left(\sum_{R \in\bfD}|[Q\ci{R},b](f)(x)|^2\right)^{p/2}\lambda(x)dx\\
&\gtrsim\int_{\R^2}\left(\sum_{R \in\bfD}|M\ci{\bfD}[Q\ci{R},b](f)(x)|^2\right)^{p/2}\lambda(x)dx\\
&\geq\int_{\R^2}\left(\sum_{R \in\bfD}|Q\ci{R}[Q\ci{R},\Theta_b](f)(x)|^2\right)^{p/2}\lambda(x)dx\\
&=\int_{\R^2}\left(\sum_{R \in\bfD}|Q\ci{R}\Theta_b(f)(x)|^2\right)^{p/2}\lambda(x)dx \sim \Vert \Theta_b(f) \Vert\ci{L^p(\lambda)}^p.
\end{align*}
This shows \eqref{eq:ThetaNormEquiv}.

Now we are only left with checking that
\begin{equation*}
  \Vert \Theta_b \Vert\ci{L^p(\mu) \rightarrow L^p(\lambda)} \gtrsim
  \Vert b\Vert\ci{\text{bmo}\ci{\bfD}(\nu)}.
\end{equation*}
Observe that testing the operator on Haar functions we immediately get
\begin{equation*}
  \Vert \Theta_b \Vert\ci{L^p(\mu) \rightarrow L^p(\lambda)} \Vert h\ci{R} \Vert\ci{L^p(\mu)} \geq \Vert \Theta_b h\ci{R} \Vert\ci{L^p(\lambda)}
  = \Vert (b - \La b \Ra\ci{R}) \1\ci{R} \Vert\ci{L^p(\lambda)} |R|^{-1/2}.
\end{equation*}
We also have that $\Vert h\ci{R} \Vert\ci{L^p(\mu)} = (\mu(R))^{1/p} |R|^{-1/2},$ so that
\begin{equation*}
  \Vert \Theta_b \Vert\ci{L^p(\mu) \rightarrow L^p(\lambda)} \geq \frac{1}{(\mu(R))^{1/p}} \Vert (b - \La b \Ra\ci{R}) \1\ci{R} \Vert\ci{L^p(\lambda)}.
\end{equation*}
But the supremum of the right-hand side is precisely $\Vert b\Vert\ci{\text{bmo}\ci{\bfD}(\mu,\lambda,p)} \sim \Vert b\Vert\ci{\text{bmo}\ci{\bfD}(\nu)}.$
\end{proof}

This method can also be adapted to general indexed BMO spaces, and thus to the general multiparameter little bmo case.
We explain how to do it taking as an example the product space $\R^3 = \R\times\R\times\R,$ and the partition $\cI:=\lbrace I_1:=\lbrace1,3\rbrace,~I_2:=\lbrace2\rbrace\rbrace$ of $\lbrace1,2,3\rbrace.$
In this case, we consider the set $\bsigma_{1,3}$ of sign choices $\sigma = \lbrace \sigma(I\times J) \rbrace\ci{I\times J \in \bfD}$, and the set $\bsigma_2$ of sign choices $\sigma = \lbrace \sigma(I) \rbrace\ci{I\in \bfD}.$
Given $\sigma_{1,3} \in \bsigma_{1,3}$ and $\sigma_2 \in \bsigma_2$ consider the martingale transform $T\ci{\sigma_{1,3}}^{1,3}$ acting on variables $1$ and $3$ and the martingale transform $T\ci{\sigma_2}^2$ acting on variable $2.$
The previous method can be adapted to show that
\begin{equation*}
\sup_{\sigma_{1,3}\in\bsigma_{1,3},~\sigma_2\in\Sigma_{2}}\Vert [T_{\sigma_{1,3}}^{1,3},[T_{\sigma_2}^{2},b]]\Vert\ci{L^p(\mu)\rightarrow L^{p}(\lambda)}\gtrsim\Vert b\Vert\ci{\text{bmo}^{\cI}\ci{\bfD}(\nu)},
\end{equation*}
while the corresponding upper bound was already proved by Airta (see \cite[Theorem~4.12]{airta}).
To this end, define the operator $\Xi_b$ by
\begin{equation*}
  \Xi_b f = \sum_{I,J,K\in\cD} (\xi\ci{I\times J \times K} b) f\ci{I\times J \times K} h\ci{I\times J \times K}, \qquad f \in L^2(\R^3),
\end{equation*}
where
\begin{equation*}
  \xi\ci{I\times J \times K} b = (b - \La b \Ra\ci{I\times K}^{1,3} - \La b \Ra\ci{J}^{2} + \La b \Ra\ci{I\times J \times K}^{1,2,3}) \1\ci{I\times J \times K}
\end{equation*}
and where we use $\La b \Ra\ci{I\times K}^{1,3}$ to denote the average taken on variables $1$ and $3$ taken over $I\times K$ (similarly for $\La b \Ra\ci{J}^{2}$).
Then one can repeat the arguments in Lemma \ref{l: symparaprod-Haarmult-p} to show that
\begin{equation*}
\sup_{\sigma_{1,3}\in\bsigma_{1,3},~\sigma_2\in\Sigma_{2}}\Vert [T_{\sigma_{1,3}}^{1,3},[T_{\sigma_2}^{2},b]]\Vert\ci{L^p(\mu)\rightarrow L^{p}(\lambda)}\gtrsim\Vert \Xi_b\Vert\ci{L^{p}(\mu) \rightarrow L^p(\lambda)}.
\end{equation*}
The lower bound for the operator norm of $\Xi_b$ follows a similar argument to that of $\Lambda_b,$ with the difference that in this case one needs to check the $\BMOprodD$ norm in variables $1,2$ and in variables $2,3.$
We focus on how to get the bound for variables $1$ and $2,$ as the other case is done in the same way.
Consider an arbitrary open set $\Omega \subseteq \R \times \R$ of non-zero finite measure and its characteristic function $\1\ci{\Omega}(x_1,x_2)$ in variables $1$ and $2.$
Fix $x_3$ and take $K \in \cD$ such that $x_3 \in K.$
Note that for $x \in \R\times\R\times\lbrace x_3 \rbrace$ we trivially have $\Xi_{b(x_{\overline{1,2}})} (\1\ci{\Omega} \otimes h\ci{K}) = (\Lambda_{b(x_{\overline{1,2}})}\1\ci{\Omega}) \otimes h\ci{K}.$
By testing the operator $\Xi_b$ on $\1\ci{\Omega} \otimes h\ci{K}$ one gets
\begin{align*}
  &\frac{1}{|K|^{1-p/2}} \Vert \Xi_b (\1\ci{\Omega} \otimes h\ci{K}) \Vert\ci{L^p(\lambda)}^p\\
  &=\int_{\R\times\R} \frac{1}{|K|} \int_K \left|\sum_{I,J \in \cD} (b-\La b \Ra\ci{I\times K}^{1,3}-\La b \Ra\ci{J}^{2}+\La b \Ra\ci{I\times J\times K}^{1,2,3})(\1\ci{\Omega})\ci{I\times J}\right|^p \lambda(x_1,x_2,x_3) dx\\
  &\leq \Vert \Xi_b\Vert\ci{L^{p}(\mu) \rightarrow L^p(\lambda)}^p \La \mu(\Omega,x_3) \Ra\ci{K}^{3}.
\end{align*}
Applying twice Lebesgue Differentiation Theorem and Fatou's Lemma we get
\begin{align*}
  &\Vert \Lambda_{b(x_{\overline{1,2}})}\1\ci{\Omega} \Vert\ci{L^p(\lambda(x_{\overline{1,2}}))}^{p}\\
  &=\int_{\R\times\R} \left|\sum_{I,J \in \cD} (b(x_{\overline{1,2}})-\La b(x_{\overline{1,2}}) \Ra\ci{I}^{1}-\La b(x_{\overline{1,2}}) \Ra\ci{J}^{2}+\La b(x_{\overline{1,2}}) \Ra\ci{I\times J}^{1,2})(\1\ci{\Omega})\ci{I\times J}\right|^p \lambda(x_{\overline{1,2}})\, dx_1\, dx_2\\
  &\leq \lim_{K \to x_3} \Vert \Xi_b\Vert\ci{L^{p}(\mu) \rightarrow L^p(\lambda)}^p \La \mu(\Omega,x_3) \Ra\ci{K}^{3}
  = \Vert \Xi_b\Vert\ci{L^{p}(\mu) \rightarrow L^p(\lambda)}^p \mu(\Omega,x_3),
\end{align*}
where $K \to x_3$ denotes that the limit is taken through a sequence of intervals containing $x_3$ with side length tending to $0,$ and where the last equality holds at almost every $x_3.$
Thus, by the bound \eqref{symparaprod-BMO-p} for operator $\Lambda_{b(x_{\overline{1,2}})}$ in terms of the dyadic one-weight biparameter product BMO norm, we get
\begin{equation*}
  \Vert \Xi_b\Vert\ci{L^{p}(\mu) \rightarrow L^p(\lambda)}
  \geq \esssup_{x_3} \Vert b(x_{\overline{1,2}}) \Vert\ci{\BMOprodD(\nu(x_{\overline{1,2}}))}.
\end{equation*}

The case of general commutators and indexed spaces can be worked out in a similar way, considering the appropriate multiparameter analogues of the operator $\Xi_b$, Lemma \ref{l: symparaprod-Haarmult-p} and equation \eqref{symparaprod-BMO-p}.

\end{document}